\numberwithin{equation}{section}
\newtheorem{theorem}{Theorem}[section]
\newtheorem{lemma}[theorem]{Lemma}
\def \Z {\mathbb{Z}}
\def \mod {\ ({\rm mod}\,\,}
\def \deg {{\rm deg}}
\def \lcm {{\rm lcm}}
\newcommand{\vect}[1]{\boldsymbol{#1}}
\title{\bf On balanced incomplete block designs with specified weak chromatic number}
\author{
Daniel Horsley\\
School of Mathematical Sciences \\
Monash University \\
Vic 3800, Australia \\[0.1cm]
\texttt{danhorsley@gmail.com}\\[0.4cm]
David A. Pike\\
Department of Mathematics and Statistics \\
Memorial University of Newfoundland \\
St. John's, NL, Canada A1C 5S7 \\[0.1cm]
\texttt{dapike@mun.ca}}
\date{ }
\colorlet{g}{DarkGreen}
\colorlet{r}{DarkRed}
\begin{document}
\maketitle \thispagestyle{empty}
\def\baselinestretch{1.5}\small\normalsize

\begin{abstract}
A weak $c$-colouring of a balanced incomplete block design (BIBD) is a colouring of the points of
the design with $c$ colours in such a way that no block of the design has all of its vertices
receive the same colour. A BIBD is said to be weakly $c$-chromatic if $c$ is the smallest number of
colours with which the design can be weakly coloured. In this paper we show that for all $c \geq 2$
and $k \geq 3$ with $(c,k) \neq (2,3)$, the obvious necessary conditions for the existence of a
$(v,k,\lambda)$-BIBD are asymptotically sufficient for the existence of a weakly $c$-chromatic
$(v,k,\lambda)$-BIBD.
\end{abstract}

\section{Introduction}

A \emph{balanced incomplete block design} of order $v$, block size $k$ and index $\lambda$, denoted
a $(v,k,\lambda)$-BIBD, is a pair $(V,\mathcal{B})$ such that $V$ is a set of $v$ elements (called
points) and $\mathcal{B}$ is a collection of $k$ element subsets of $V$ (called blocks) such that
each unordered pair of points in $V$ is contained in exactly $\lambda$ blocks in $\mathcal{B}$. A
\emph{partial} $(v,k,\lambda)$-BIBD is defined similarly except that each pair of points in $V$
must be contained in at most $\lambda$ blocks in $\mathcal{B}$.

For a positive integer $c$, a weak $c$-colouring of a (partial) $(v,k,\lambda)$-BIBD is a colouring
of the points of the design with $c$ colours in such a way that no block of the design has all of
its points receive the same colour. A (partial) $(v,k,\lambda)$-BIBD is said to be \emph{weakly
$c$-chromatic}, or to have \emph{weak chromatic number} $c$, if $c$ is the smallest number of
colours with which the design can be weakly coloured. Since weak colourings are the only colourings
of designs we will consider in this paper, we will often omit the adjectives `weak' and `weakly' in
what follows.

It is obvious that if there exists a $(v,k,\lambda)$-BIBD then
\begin{itemize}
    \item[(i)]
$\lambda(v-1) \equiv 0 \mod k-1)$; and
    \item[(ii)]
$\lambda v(v-1) \equiv 0 \mod k(k-1))$.
\end{itemize}
Wilson \cite{Wi} famously proved that (i) and (ii) are asymptotically sufficient for the existence
of a $(v,k,\lambda)$-BIBD. That is, for any positive integers $k$ and $\lambda$ with $k \geq 3$,
there exists an integer $N'(k,\lambda)$ such that if $v \geq N'(k,\lambda)$ then (i) and (ii) are
sufficient for the existence of a $(v,k,\lambda)$-BIBD.  In this paper, we will extend Wilson's
result to $c$-chromatic BIBDs by showing that, for any positive integers $c$, $k$ and $\lambda$,
such that $c \geq 2$, $k \geq 3$ and $(k,c) \neq (3,2)$, (i) and (ii) are asymptotically sufficient
for the existence of a $c$-chromatic $(v,k,\lambda)$-BIBD. For the sake of brevity, we will call
positive integers $v$ which satisfy (i) and (ii) \emph{$(k,\lambda)$-admissible}. Note that if an
integer $v$ is $(k,\lambda)$-admissible then so is every positive integer congruent to $v$ modulo
$k(k-1)$.

Weak colourings were first introduced in the context of hypergraphs, and this naturally led to the
study of weak colourings of block designs. A simple counting argument \cite{Ro} shows that
$2$-chromatic $(v,3,\lambda)$-BIBDs exist only for $v \leq 4$. For a positive integer $\lambda$ it
is known that a $2$-chromatic $(v,4,\lambda)$-BIBD exists for each $(4,\lambda)$-admissible
integer $v$, with almost all of the problem solved in \cite{HoLiPh1} and \cite{HoLiPh2} and the
outstanding cases resolved in \cite{RoCo} and \cite{FrGrLiRo}. Ling \cite{Li} has proved that a
$2$-chromatic $(v,5,1)$-BIBD exists for each $(5,1)$-admissible integer $v$. It has been shown by
de Brandes, Phelps and R\"{o}dl \cite{DePhRo} that for all integers $c \geq 3$ there is an integer
$N(c,3,1)$ such that for all $(3,1)$-admissible integers $v \geq N(c,3,1)$ there is a
$c$-chromatic $(v,3,1)$-BIBD. The analogous result for $(v,4,1)$-BIBDs has been proved by Linek
and Wantland \cite{LiWa}. For a survey of colourings of block designs see \cite{RoCo}.  The main
result of this paper is as follows.

\begin{theorem}\label{MainTheorem}
Let $c$, $k$ and $\lambda$ be positive integers such that $c \geq 2$, $k \geq 3$ and $(c,k) \neq
(2,3)$. Then there is an integer $N(c,k,\lambda)$ such that there exists a weakly $c$-chromatic
$(v,k,\lambda)$-BIBD for all $(k,\lambda)$-admissible integers $v \geq N(c,k,\lambda)$.
\end{theorem}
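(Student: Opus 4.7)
The plan is to prove the theorem by combining two ingredients: an explicit \emph{colour-forcing} partial design that witnesses the lower bound $\chi \geq c$, and a \emph{completion} argument that embeds this gadget into a $(v,k,\lambda)$-BIBD admitting a weak $c$-colouring, thereby witnessing $\chi \leq c$. More precisely, for each triple $(c,k,\lambda)$ satisfying the hypothesis, I would first construct a partial $(w,k,\lambda)$-BIBD $P=(W,\mathcal{B}_P)$ of bounded size $w=w(c,k,\lambda)$ whose weak chromatic number is exactly $c$. I would then show that for every sufficiently large $(k,\lambda)$-admissible integer $v$, there is a $(v,k,\lambda)$-BIBD $(V,\mathcal{B})$ containing $P$ as a subdesign (that is, $W\subseteq V$ and $\mathcal{B}_P\subseteq \mathcal{B}$) that carries a weak $c$-colouring extending a fixed weak $c$-colouring $\phi$ of $P$.

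For the lower bound, no additional argument is required: any weak $(c-1)$-colouring $\psi$ of $(V,\mathcal{B})$ would restrict to a weak $(c-1)$-colouring of $P$, contradicting the chromatic number of $P$. For the upper bound, I would construct $(V,\mathcal{B})$ and its weak $c$-colouring simultaneously via a group divisible design framework whose $c$ groups $V_1,\ldots,V_c$ play the role of colour classes, with $W$ distributed among the groups according to $\phi$. Provided the GDD has all blocks meeting at least two groups and the ingredient designs filling each group are compatible with $\phi$ on $W$, no block of $\mathcal{B}$ will be monochromatic. Existence of the required GDDs and ingredient $(k,\lambda)$-designs for all sufficiently large admissible $v$ is then guaranteed by Wilson's theorem together with standard PBD-closure machinery, likely with case-splitting by the residue of $v$ modulo $k(k-1)$ so that the admissibility arithmetic balances correctly across the $c$ groups.

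The main obstacle is constructing the weakly $c$-chromatic partial design $P$ uniformly for all admissible $(c,k,\lambda)$ with $(c,k)\neq (2,3)$. The case $c=2$, $k\geq 4$ is trivial (any single block suffices), and the case $(c,k)=(2,3)$ is excluded precisely because a classical counting argument rules out weak $2$-chromatic $(v,3,\lambda)$-BIBDs for $v\geq 5$. For larger $c$, however, one must build $P$ by iteration: start from a weakly $(c-1)$-chromatic partial design, then adjoin fresh points and ``sunflower-like'' collections of blocks in such a way that any attempt to avoid a monochromatic block forces the introduction of a $c$th colour, while keeping each pair of points in at most $\lambda$ blocks. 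Verifying weak chromatic number exactly $c$ for this recursive construction, and then showing that $P$ can indeed be embedded by the GDD-based completion in a manner compatible with $\phi$, will be the most delicate part; the embedding step in particular requires adapting standard Wilson-style completion results to respect a prescribed colouring on the fixed set $W$, but this should be achievable by incorporating colouring constraints into the ingredient designs.
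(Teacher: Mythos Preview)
Your high-level strategy --- embed a bounded-size partial design $P$ with $\chi(P)=c$ into a full $(v,k,\lambda)$-BIBD that still admits a weak $c$-colouring --- is sound and is broadly what the paper does for $k \geq 4$. However, the specific mechanism you propose for the completion has a genuine gap. If the $c$ groups $V_1,\ldots,V_c$ of your GDD \emph{are} the colour classes, then the GDD blocks are indeed non-monochromatic, but to turn the GDD into a BIBD you must fill each group $V_i$ with an ingredient $(|V_i|,k,\lambda)$-BIBD, and every block of that ingredient design lies entirely inside $V_i$ and is therefore monochromatic. No choice ``compatible with $\phi$'' can avoid this. The paper's resolution is to keep the GDD group partition and the colour partition \emph{transverse}: the ingredient designs placed on the groups carry their own $2$-blocking systems (or $3$-blocking systems when $k=4$), and the global colour classes are assembled by stitching these local blocking systems together across groups. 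This is why most of the technical effort (Lemmas~2.3--2.6, 3.1--3.3, 4.2--4.4, 5.1) goes into manufacturing GDDs and BIBDs whose blocking systems have prescribed intersection sizes with every group.

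Two smaller points. First, for the gadget $P$ the paper does not use your proposed recursive sunflower construction; it invokes the Erd\H{o}s--Hajnal theorem to get a partial $(w,k,1)$-BIBD with $\chi \geq c$ and then removes blocks one at a time, noting that $\chi$ drops by at most one per deletion, to land on $\chi = c$ exactly (Lemma~4.1). Second, for $k=3$ the paper does \emph{not} run the embed-and-complete argument at all: because $2$-blocking machinery is unavailable for triple systems, it instead builds a one-parameter family $\mathcal{C}_0,\ldots,\mathcal{C}_t$ of $(v,3,\lambda)$-BIBDs on a common point set with $\chi(\mathcal{C}_0)\leq 3$, $\chi(\mathcal{C}_t)\geq c$, and $\chi(\mathcal{C}_{i+1})\leq\chi(\mathcal{C}_i)+1$ (successive designs differ only in blocks through two fixed points), and then selects an $i$ with $\chi(\mathcal{C}_i)=c$ by the discrete intermediate value principle. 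Your outline does not flag $k=3$ as needing separate treatment, and your GDD scheme gives no indication of how it would be adapted there.
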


In Section 2 we give some definitions that we will require throughout the paper and prove a number
of preliminary results. Sections 3, 4 and 5 deal with BIBDs with block size at least 4. In Section
3 we find various examples of $2$-chromatic BIBDs, and these are then used in Section 4 to obtain
various examples of $c$-chromatic BIBDs for each $c \geq 2$. In Section 5 we are then able to use
results from Sections 2, 3 and 4 to demonstrate the asymptotic existence of $c$-chromatic BIBDs for
each $c \geq 2$. Finally, in Section 6, we deal with the case of BIBDs with block size $3$ (that
is, triple systems) and thus complete the proof of Theorem \ref{MainTheorem}.

\section{Preliminary definitions and results}

Let $v$ and $\lambda$ be positive integers and let $K$ be a set of positive integers. A \emph{group
divisible design} of order $v$ and index $\lambda$ with block sizes from $K$, denoted a
$(K,\lambda)$-GDD, is a triple $(V,\mathcal{G},\mathcal{B})$ such that $V$ is a set of $v$ elements
(called points), $\mathcal{G}$ is a partition of $V$ into parts (called groups) and $\mathcal{B}$
is a collection of subsets of $V$ (called blocks) such that $|B| \in K$ for all $B \in
\mathcal{B}$, each unordered pair of points in different groups is contained in exactly $\lambda$
blocks, and no unordered pair of points in the same group is contained in any block. If, for
integers $g_1,g_2,\ldots,g_t$ and $a_1,a_2,\ldots,a_t$, $\mathcal{G}$ contains $a_i$ groups of size
$g_i$ for $i \in \{1,2,\ldots,t\}$ and $\mathcal{G}$ contains no groups of any other size then we
say that $(V,\mathcal{G},\mathcal{B})$ is of type $g_1^{a_1}g_2^{a_2}\cdots g_t^{a_t}$. We will
abbreviate $(\{k\},\lambda)$-GDD to $(k,\lambda)$-GDD. A $(k,1)$-GDD of type $g^k$ is more commonly
referred to as a transversal design with group size $g$ and block size $k$.

We say that a partial BIBD $(V_1,\mathcal{B}_1)$ is \emph{embedded} in a partial BIBD
$(V_2,\mathcal{B}_2)$ if $V_1 \subseteq V_2$ and $\mathcal{B}_1 \subseteq \mathcal{B}_2$. A
\emph{decomposition} of a graph $G$ is a collection $\{G_1,G_2,\ldots,G_t\}$ of subgraphs of $G$
whose edge sets partition the edge set of $G$. We extend this definition to edge-coloured digraphs
in the obvious way. A $(v,k,\lambda)$-BIBD can be considered as a decomposition of the
$\lambda$-fold complete graph with $v$ vertices into copies of the complete graph with $k$
vertices.

To simplify the presentation of many of our results, we will introduce a generalisation of the
well-known concept of a blocking set. We will say that a collection $\{S_1,S_2,\ldots,S_c\}$ of
pairwise disjoint subsets of the point set of a design is a \emph{blocking system} for that design
if each block of the design has a non-empty intersection with at least two of the sets in
$\{S_1,S_2,\ldots,S_c\}$. We will also refer to such a blocking system as a $c$-blocking system if
we wish to specify the number of sets in the system or as an $(s_1,s_2,\ldots,s_c)$-blocking
system, where $s_i = |S_i|$ for each $i \in \{1,2,\ldots,c\}$, if we wish to specify the sizes of
the sets in the system. Obviously the existence of a $c$-blocking system for a design implies the
existence of a $c$-colouring for that design. Note that if a design on $v$ points has an
$(s_1,s_2,\ldots,s_c)$-blocking system for integers $s_1,s_2,\ldots,s_c$ then it has an
$(s'_1,s'_2,\ldots,s'_c)$-blocking system for all integers $s'_1,s'_2,\ldots,s'_c$ such that $s'_1
+ s'_2 + \cdots + s'_c \leq v$ and $s'_i \geq s_i$ for each $i \in \{1,2,\ldots,c\}$. We will often
use this fact tacitly in what follows.

Finally, we will require a result from \cite{LaWi} on decompositions of edge-coloured graphs. To
state the result we need some additional definitions and notation. For full details of the
framework we refer the reader to \cite{LaWi}. We will denote by $\vect{1_{n}}$ the $n$-dimensional
vector all of whose components are $1$. Let $C$ be a set of colours, let $r=|C|$, and let $\lambda
K^{(C)}_v$ denote the edge-coloured digraph on $v$ vertices in which there are exactly $\lambda$
edges of each colour in $C$ directed from $x$ to $y$ for any ordered pair $(x,y)$ of distinct
vertices. Let $\mathcal{H}$ be a family of edge-coloured digraphs whose edges are coloured with
colours from $C$. An $\mathcal{H}$-decomposition of an edge-coloured digraph $K$ is a
decomposition $\mathcal{D}$ of $K$ such that each edge-coloured digraph $G \in \mathcal{D}$ is
isomorphic to some graph in $\mathcal{H}$. For a graph $H \in \mathcal{H}$ and a vertex $x \in
V(H)$, we define $\tau(H,x)$ to be the $2r$-dimensional vector indexed by $C \times \{1,2\}$,
whose $(c,1)$ component is the number of edges coloured $c$ which are directed to $x$ and whose
$(c,2)$ component is the number of edges coloured $c$ which are directed from $x$. Let
$\alpha(\mathcal{H})$ denote the greatest common divisor of the integers $m$ such that
$m\vect{1_{2r}}$ is an integral linear combination of the vectors in $\{\tau(H,x):H \in
\mathcal{H}, x \in V(H)\}$. For a graph $H \in \mathcal{H}$, we define $\mu(H)$ to be the
$r$-dimensional vector indexed by $C$ whose $c$ component is the number of directed edges in $H$
which are coloured $c$. Let $\beta(\mathcal{H})$ denote the greatest common divisor of the
integers $m$ such that $m\vect{1_{r}}$ is an integral linear combination of the vectors in
$\{\mu(H):H \in \mathcal{H}\}$. We say that $\mathcal{H}$ is \emph{allowable} if $\vect{1_{r}}$
can be expressed as a linear combination of the vectors in $\{\mu(H):H \in \mathcal{H}\}$ with
strictly positive rational coefficients. Note that in \cite{LaWi} families with this last property
were called admissible but we rename it here to avoid confusion with our separate definition of
admissibility. Also note that in \cite{LaWi} this property is defined in a different way, but it
is also shown that the above definition is equivalent. The following result is given as Corollary
13.3 of \cite{LaWi}.

\begin{theorem}[\cite{LaWi}]\label{LamWilTheorem}
The graph $\lambda K^{(C)}_v$ admits a $\mathcal{H}$-decomposition for all sufficiently large
integers $v$ satisfying
\begin{itemize}
    \item
$\lambda(v-1) \equiv 0 \mod \alpha(\mathcal{H}))$; and
    \item
$\lambda v(v-1) \equiv 0 \mod \beta(\mathcal{H}))$;
\end{itemize}
provided that $\mathcal{H}$ is allowable.
\end{theorem}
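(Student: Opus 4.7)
The plan is to adapt the three-stage strategy Wilson used to prove the asymptotic existence of $(v,k,\lambda)$-BIBDs, generalising each stage to the setting of edge-coloured digraph decompositions. The outline is: (i) establish necessity of the two divisibility conditions by counting vectors; (ii) construct a sufficient supply of ``ingredient'' $\mathcal{H}$-decompositions of certain small edge-coloured digraphs; and (iii) combine these ingredients via a coloured analogue of Wilson's Fundamental Construction together with a PBD-closure argument to cover all sufficiently large admissible~$v$.

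Necessity is the easier direction. In any $\mathcal{H}$-decomposition of $\lambda K^{(C)}_v$, for each vertex $x$ the sum of the vectors $\tau(H,x')$ over graphs $H$ in the decomposition containing $x$ (with $x'$ the image of $x$) equals the total in/out-degree-by-colour vector at $x$, which is $\lambda(v-1)\vect{1_{2r}}$. Hence this vector lies in the integer lattice generated by $\{\tau(H,x'):H\in\mathcal{H},x'\in V(H)\}$, forcing $\alpha(\mathcal{H}) \mid \lambda(v-1)$. Summing $\mu(H)$ over all graphs in the decomposition yields $\lambda v(v-1)\vect{1_r}$ as an integer combination of the $\mu(H)$, so $\beta(\mathcal{H}) \mid \lambda v(v-1)$. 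Allowability supplies a strictly positive rational solution to the corresponding linear system and is what makes sufficiency possible at all.

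For sufficiency, I would first use allowability together with the divisibility conditions to build, after clearing denominators, a family of ingredient $\mathcal{H}$-decompositions of small pieces: certain $\lambda K^{(C)}_v$ for small $v$, edge-coloured complete multipartite digraphs playing the role of transversal designs, and ``near-complete'' graphs $\lambda K^{(C)}_v \setminus \lambda K^{(C)}_W$ used for filling holes. I would then apply a coloured-digraph version of Wilson's Fundamental Construction: starting from a transversal design with $k$ groups of size $m$, weight each point uniformly and replace each block by a suitable coloured-digraph transversal decomposition, producing decompositions of edge-coloured complete graphs of much larger order. A PBD-closure argument then shows that the set of $v$ for which $\lambda K^{(C)}_v$ admits an $\mathcal{H}$-decomposition is closed under the recursive constructions and, together with the ingredients, contains every sufficiently large admissible~$v$.

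The main obstacle is the simultaneous control of the vertex-level and edge-level arithmetic conditions throughout the recursion. In the uncoloured case these collapse to the familiar pair $(k-1)\mid\lambda(v-1)$ and $k(k-1)\mid\lambda v(v-1)$ and ingredient sizes can be chosen transparently, but in the general edge-coloured setting the lattices generated by $\{\tau(H,x)\}$ and $\{\mu(H)\}$ may be genuinely higher-dimensional. One must show that positive integer combinations of the available ingredient vectors realise every residue class permitted by $\alpha(\mathcal{H})$ and $\beta(\mathcal{H})$ and that, once this is arranged, a single recursive step does not destroy the balance. Verifying this reduction, and constructing ingredients that cover all necessary residues, is the deepest step; it is also what forces the conclusion to be merely asymptotic rather than holding for every admissible~$v$.
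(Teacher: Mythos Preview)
The paper does not prove this statement at all: Theorem~\ref{LamWilTheorem} is quoted as Corollary~13.3 of \cite{LaWi} and is used as a black box throughout the paper, with no proof or sketch supplied. So there is no ``paper's own proof'' to compare your proposal against.

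That said, your outline is a reasonable high-level description of the Lamken--Wilson strategy in \cite{LaWi}. The necessity argument you give is correct. For sufficiency, the actual proof in \cite{LaWi} is considerably more delicate than a direct adaptation of Wilson's three-stage PBD-closure method; it proceeds via a general machinery of ``metamorphoses'' and makes essential use of a rational-decomposition result together with integer-programming lemmas to pass from fractional to integral solutions. Your sketch captures the spirit but understates the technical work needed to control the lattice conditions simultaneously across colours, which is precisely the obstacle you flag in your final paragraph. If you were actually asked to prove this, you would need to engage with those details rather than treat them as a routine generalisation of the uncoloured case; but for the purposes of this paper, a citation is all that is required.
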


Our main goal in this section will be to prove Lemmas \ref{ManyGroupGDD} and
\ref{ManyGroupGDDk=4}. Our proofs use techniques from \cite{LaWi} and closely follow the proof of
Theorem 8.1 of that paper, although we must be careful at times to ensure that the GDD we obtain
has the required blocking system. We will require the following well-known result (see \cite{Sc},
for example).

\begin{lemma}\label{IntLC}
Let $r$ be a positive integer. Given a set of $r$-dimensional rational vectors $U$, an
$r$-dimensional rational vector $\vect{c}$ can be written as an integral combination of the vectors
in $U$ if and only if, for every $r$-dimensional rational vector $\vect{y}$ such that the dot
product $\vect{y} \cdot \vect{u}$ is an integer for each $\vect{u} \in U$, the dot product
$\vect{y} \cdot \vect{c}$ is an integer.
\end{lemma}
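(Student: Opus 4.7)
The forward direction is immediate by linearity: if $\vect{c} = \sum_i n_i \vect{u}_i$ with $n_i \in \mathbb{Z}$, then $\vect{y} \cdot \vect{c}$ is a sum of integers whenever each $\vect{y} \cdot \vect{u}_i$ is.

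For the converse, since an integer combination involves only finitely many summands, I would assume $U = \{\vect{u}_1,\ldots,\vect{u}_m\}$ is finite. Set $L = \{\sum_i n_i \vect{u}_i : n_i \in \mathbb{Z}\}$; this is a finitely generated subgroup of the torsion-free abelian group $(\mathbb{Q}^r,+)$, hence a free abelian group of some rank $k \leq r$. The plan is to fix a $\mathbb{Z}$-basis $\vect{b}_1, \ldots, \vect{b}_k$ of $L$, extend it to a $\mathbb{Q}$-basis $\vect{b}_1, \ldots, \vect{b}_r$ of $\mathbb{Q}^r$, and consider the rational dual basis $\vect{b}_1^*, \ldots, \vect{b}_r^*$ defined by $\vect{b}_j^* \cdot \vect{b}_i = \delta_{ij}$. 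The test vectors $\vect{y}$ will be chosen from among the $\vect{b}_j^*$ and their rational scalar multiples.

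Given $\vect{c} \in \mathbb{Q}^r$ satisfying the hypothesis, write $\vect{c} = \sum_{j=1}^r c_j \vect{b}_j$. For $j \leq k$, the functional $\vect{b}_j^*$ sends $L$ (and in particular every $\vect{u} \in U$) into $\mathbb{Z}$, so the hypothesis gives $c_j = \vect{b}_j^* \cdot \vect{c} \in \mathbb{Z}$. For $j > k$, every rational multiple $\alpha \vect{b}_j^*$ annihilates $L$, so the hypothesis yields $\alpha c_j \in \mathbb{Z}$ for every $\alpha \in \mathbb{Q}$; this forces $c_j = 0$. Consequently $\vect{c} = \sum_{j=1}^{k} c_j \vect{b}_j \in L$, exhibiting the desired integer combination.

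The only substantial step is the existence of the $\mathbb{Z}$-basis $\vect{b}_1,\ldots,\vect{b}_k$ of $L$, and this is where I would expect the real work (the rest being routine duality). Fortunately this is precisely the classical structure theorem for finitely generated torsion-free abelian groups; concretely, one can clear denominators and compute the Smith or Hermite normal form of the integer matrix whose columns are $\vect{u}_1,\ldots,\vect{u}_m$ to produce the basis explicitly. Once this is granted, the argument above completes the proof.
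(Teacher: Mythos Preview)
The paper does not supply its own proof of this lemma: it is quoted as a well-known result with a reference to Schrijver's \emph{Theory of Linear and Integer Programming}, and is used as a black box in the proofs of Lemmas~\ref{ManyGroupGDD} and~\ref{ManyGroupGDDk=4}. So there is no ``paper's proof'' to compare against, and your proposal stands on its own.

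Your argument is correct. The forward direction is trivial, and for the converse your dual-basis approach is clean: picking a $\mathbb{Z}$-basis $\vect{b}_1,\ldots,\vect{b}_k$ of the lattice $L$ generated by $U$, extending over $\mathbb{Q}$, and testing against the dual basis vectors (and their rational multiples for $j>k$) does exactly what is needed. The appeal to the structure theorem for finitely generated torsion-free abelian groups (equivalently, Smith/Hermite normal form) is the natural tool here, and is precisely the sort of result Schrijver's book provides.

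One small remark: your sentence ``since an integer combination involves only finitely many summands, I would assume $U$ is finite'' is not quite a valid reduction---the finiteness of individual combinations does not by itself let you replace an infinite $U$ by a finite one, and in fact the lemma can fail for infinite $U$ (take $r=1$ and $U=\{2^{-n}:n\ge0\}$: then the only test vector is $\vect{y}=0$, yet $1/3$ is not an integer combination of elements of $U$). However, every application of this lemma in the paper is to a finite set $U$, so the finiteness hypothesis is harmless and your proof covers everything that is actually needed.
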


\begin{lemma}\label{ManyGroupGDD}
Let $k$, $\lambda$ and $g$ be positive integers such that $k \geq 5$ and either $g=k-1$ or $g \geq
2k-2$. Then for each sufficiently large integer $t$ satisfying
\begin{itemize}
    \item[(i)]
$\lambda g(t-1) \equiv 0 \mod k-1)$; and
    \item[(ii)]
$\lambda g^2t(t-1) \equiv 0 \mod k(k-1))$;
\end{itemize}
there exists a $(k,\lambda)$-GDD of type $g^t$ which has a $2$-blocking system such that each set
of the blocking system intersects each group of the GDD in exactly $\lfloor\frac{g}{2}\rfloor$
points.
\end{lemma}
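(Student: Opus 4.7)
The plan is to realise the desired GDD as the output of applying Theorem~\ref{LamWilTheorem} to a suitably chosen family $\mathcal{H}$ of edge-coloured digraphs on $v = t$ vertices, where each vertex of $[t]$ plays the role of one of the $t$ eventual groups. To build the blocking system into the framework, partition $[g]$ into $R = \{1, \ldots, \lfloor g/2 \rfloor\}$, $B = \{\lfloor g/2 \rfloor + 1, \ldots, 2\lfloor g/2 \rfloor\}$, and, if $g$ is odd, a single uncoloured label. Take the colour set $C = [g] \times [g]$; for each function $f\colon \{1, \ldots, k\} \to [g]$ whose image meets both $R$ and $B$, let $H(f)$ be the edge-coloured digraph on $\{1, \ldots, k\}$ in which the directed edge from $a$ to $b$ (for $a \neq b$) is coloured $(f(a), f(b))$, and let $\mathcal{H}$ consist of all such $H(f)$ up to isomorphism. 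Any $\mathcal{H}$-decomposition of $\lambda K_t^{(C)}$ then yields a $(k, \lambda)$-GDD of type $g^t$ on point set $[g] \times [t]$ with groups $\{[g] \times \{a\} : a \in [t]\}$, since each embedded copy of $H(f)$ via an injection $\phi\colon \{1, \ldots, k\} \to [t]$ is interpreted as the block $\{(f(i), \phi(i)) : i \in \{1, \ldots, k\}\}$; the sets $R \times [t]$ and $B \times [t]$ then form the required 2-blocking system, since the validity condition on $f$ forces every block to meet both sets and every group to meet each set in exactly $\lfloor g/2 \rfloor$ points.

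The next step is to verify the hypotheses of Theorem~\ref{LamWilTheorem}. To show that $\mathcal{H}$ is allowable I would write $\vect{1_{g^2}}$ as a positive rational combination of the $\mu(H(f))$ by averaging $\mu(H(f))$ over the orbits of the natural action of $S_{|R|} \times S_{|B|}$ on labels and then combining such orbit-sums from several different signatures $(n_1(f), \ldots, n_g(f))$; the hypotheses $k \geq 5$ together with $g = k-1$ or $g \geq 2k - 2$ are precisely what is required to guarantee enough valid signatures to hit $\vect{1_{g^2}}$ inside the positive cone. For $\alpha(\mathcal{H})$, the observation that $\sum_{(i,j) \in C} \tau(H(f), x)_{((i,j), s)} = k-1$ for each vertex $x$ and each $s \in \{1, 2\}$, combined with Lemma~\ref{IntLC}, yields $(k-1)/\gcd(k-1, g) \mid \alpha(\mathcal{H})$; the matching upper bound is obtained by exhibiting an explicit integer combination that realises this value. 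An entirely analogous calculation using $\sum_{(i,j) \in C} \mu(H(f))_{(i,j)} = k(k-1)$ gives $\beta(\mathcal{H}) = k(k-1)/\gcd(k(k-1), g^2)$.

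A direct computation shows that $\lambda(t-1) \equiv 0 \pmod{(k-1)/\gcd(k-1, g)}$ is equivalent to (i) and that $\lambda t(t-1) \equiv 0 \pmod{k(k-1)/\gcd(k(k-1), g^2)}$ is equivalent to (ii), so Theorem~\ref{LamWilTheorem} delivers the required $\mathcal{H}$-decomposition for all sufficiently large $t$ satisfying (i) and (ii), and the GDD with its blocking system is read off as in the first paragraph. The principal obstacle is the simultaneous verification of allowability and the reverse (upper-bound) divisibilities for $\alpha(\mathcal{H})$ and $\beta(\mathcal{H})$ in the presence of the blocking constraint: restricting to $f$ whose image meets both $R$ and $B$ trims the supply of usable configurations, and one must check that despite this restriction no further divisibility conditions are forced beyond (i) and (ii). The hypotheses on $k$ and $g$ are exactly what is needed to supply enough valid $f$ to carry this verification out.
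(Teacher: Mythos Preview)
Your approach is essentially the same as the paper's: encode the blocked GDD as an $\mathcal{H}$-decomposition of $\lambda K_t^{(C)}$ with $C=[g]\times[g]$, where $\mathcal{H}$ consists of the ``coloured $K_k$'s'' indexed by composition vectors $\vect{f}$ of $k$ into $g$ parts that meet both halves $R,B$, and then invoke Theorem~\ref{LamWilTheorem}. The setup, the interpretation of a decomposition as a $(k,\lambda)$-GDD of type $g^t$, and the recovery of the $2$-blocking system from $R\times[t]$ and $B\times[t]$ all match the paper exactly.

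One point of divergence is worth flagging. You phrase the divisibility step as computing $\alpha(\mathcal{H})$ and $\beta(\mathcal{H})$ exactly and then checking that (i) and (ii) translate into the required congruences. The paper does \emph{not} compute $\alpha$ and $\beta$; instead it shows directly, via Lemma~\ref{IntLC}, that whenever $t$ satisfies (i) and (ii) the vectors $\lambda(t-1)\vect{1_{2g^2}}$ and $\lambda t(t-1)\vect{1_{g^2}}$ lie in the respective lattices. In particular, the paper's argument for the $\mu$-lattice (your $\beta$ step) uses \emph{both} (i) and (ii), not (ii) alone, so your asserted formula $\beta(\mathcal{H})=k(k-1)/\gcd(k(k-1),g^2)$ may well be an underestimate once the blocking constraint is imposed; the paper sidesteps this by never claiming an exact value. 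Your proposed route of ``exhibiting an explicit integer combination'' is also not what the paper does---it works dually through Lemma~\ref{IntLC} with carefully chosen test vectors $\vect{f}',\vect{f}'',\ldots\in F$ and congruence subtraction.

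You correctly identify allowability as the principal obstacle. In the paper this is by far the longest part of the proof: it splits into four cases (parity of $g$ crossed with $g=k-1$ versus $g\ge 2k-2$), in each case exhibiting several explicit subfamilies $F_1,\ldots,F_5\subseteq F$, averaging $\mu$ over each, and solving an explicit linear system in the resulting ``type'' coordinates to express $\vect{1_{g^2}}-\epsilon\vect{p}$ as a non-negative combination. Your orbit-averaging idea under $S_{|R|}\times S_{|B|}$ is the right symmetry reduction and is implicitly what the paper is doing, but the actual verification that the positive cone is hit is a genuine calculation that the hypotheses $k\ge 5$ and $g\in\{k-1\}\cup[2k-2,\infty)$ are tuned to make work.
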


\begin{proof}[{\bf Proof}]
Throughout this proof, we will adopt the convention that if $\vect{u}$ is an $n$-dimensional
vector then, unless otherwise specified, $\vect{u}$ is indexed by $\{1,2,\ldots,n\}$ and component
$i$ of $\vect{u}$ is represented by $u_i$. For rational numbers $x$ and $y$ we shall use the
notation $x \equiv y$ to indicate that $x-y$ is an integer.

Let $G=\{1,2,\ldots,g\}$, let $G_1= \{1,2,\ldots,\lfloor\frac{g}{2}\rfloor\}$, let $G_2=
\{\lfloor\frac{g}{2}\rfloor+1,\lfloor\frac{g}{2}\rfloor+2,\ldots,2\lfloor\frac{g}{2}\rfloor\}$.
Let $R=G \times G$ be a set of colours. Let $F$ be the set of all $g$-dimensional integral vectors
$\vect{f}$ such that
\begin{itemize}
    \item
$f_i \geq 0$ for each $i \in \{1,2,\ldots,g\}$;
    \item
$f_1+f_2+\cdots+f_g=k$; and
    \item
$f_1 + f_2 + \cdots + f_{\lfloor\frac{g}{2}\rfloor} \geq 1$ and $f_{\lfloor\frac{g}{2}\rfloor+1}
+ f_{\lfloor\frac{g}{2}\rfloor+2} + \cdots + f_{2\lfloor\frac{g}{2}\rfloor} \geq 1$.
\end{itemize}
For each vector $\vect{f} \in F$ let $H_{\vect{f}}$ be the edge-coloured digraph with $k$ vertices
such that
\begin{itemize}
    \item
$V(H_{\vect{f}})$ has an ordered partition $(V_1,V_2,\ldots,V_g)$ such that $|V_i|=f_i$ for each
$i \in \{1,2,\ldots,g\}$; and
    \item
for any ordered pair of distinct vertices $(x,y)$ from $V(H_{\vect{f}})$ there is exactly one
directed edge from $x$ to $y$ and it has colour $(i,j)$, where $i$ and $j$ are the unique
elements of $G$ such that $x \in V_i$ and $y \in V_j$.
\end{itemize}
Let $\mathcal{H} = \{H_{\vect{f}}:\vect{f} \in F\}$.

It can be seen that an $\mathcal{H}$-decomposition of $\lambda K^{(R)}_t$ will yield a
$(k,\lambda)$-GDD of type $g^t$ (see \cite{LaWi} for details) and furthermore that this
$(k,\lambda)$-GDD will have a $2$-blocking system such that each set of the blocking system
intersects each group of the GDD in exactly $\lfloor\frac{g}{2}\rfloor$ points. (The two sets of
this blocking system will be formed by those points of the GDD which correspond to a colour in
$G_1$ and those points of the GDD which correspond to a colour in $G_2$, and the fact that $f_1 +
f_2 + \cdots + f_{\lfloor\frac{g}{2}\rfloor} \geq 1$ and $f_{\lfloor\frac{g}{2}\rfloor+1} +
f_{\lfloor\frac{g}{2}\rfloor+2} + \cdots + f_{2\lfloor\frac{g}{2}\rfloor} \geq 1$ for each
$\vect{f} \in F$ will guarantee that each block of the GDD intersects each set in the blocking
system in at at least one point.) So it suffices to show that for each sufficiently large integer
$t$ satisfying (i) and (ii), there is a decomposition of $\lambda K^{(R)}_t$ into copies of graphs
in $\mathcal{H}$.

Then by Theorem \ref{LamWilTheorem} it suffices to prove that, for each sufficiently large integer
$t$ satisfying (i) and (ii),
\begin{itemize}
    \item[(a)]
$\lambda t(t-1) \vect{1_{g^2}}$ is an integral linear combination of vectors in $\{\mu(H):H\in
\mathcal{H}\}$;
    \item[(b)]
$\lambda (t-1) \vect{1_{2g^2}}$ is an integral linear combination of vectors in
$\{\tau(H,y):H\in \mathcal{H} \hbox{ and } y \in V(H)\}$; and
    \item[(c)]
$\vect{1_{g^2}}$ can be expressed as a linear combination of the vectors in $\{\mu(H):H \in
\mathcal{H}\}$ with strictly positive rational coefficients.
\end{itemize}
This suffices because (a) guarantees that $\lambda t(t-1) \equiv 0 \mod \beta(\mathcal{H}))$, (b)
guarantees that $\lambda (t-1) \equiv 0 \mod \alpha(\mathcal{H}))$, and (c) guarantees that
$\mathcal{H}$ is allowable. Let $t$ be a positive integer satisfying (i) and (ii). We will prove
(a), (b) and (c) separately.

{\bf Proof of (a).} For each $\vect{f} \in F$, the $(i,i)$ component of $\mu(H_{\vect{f}})$ is
$f_i(f_i-1)$ for all $i \in \{1,2,\ldots,g\}$ and the $(i,j)$ component of $\mu(H_{\vect{f}})$ is
$f_if_j$ for all $i,j \in \{1,2,\ldots,g\}$ such that $i \neq j$. Thus by Lemma \ref{IntLC} it
suffices to prove that for any list of $g^2$ rational numbers $\{x_{ij}\}_{i,j \in
\{1,2,\ldots,g\}}$ satisfying
\begin{equation}\label{MainACong}
\sum_{i \neq j} f_if_jx_{ij} + \sum_i f_i(f_i-1)x_{ii} \equiv 0 \quad \hbox{for each } \vect{f} \in F,
\end{equation}
we have that
$$\sum_{i,j} \lambda t(t-1)x_{ij} \equiv 0.$$

Let $a$ and $b$ be distinct elements of $G$. Let $c$ be an element of $G \setminus \{a,b\}$ such
that $\{a,b,c\} \cap G_1 \neq \emptyset$ and $\{a,b,c\} \cap G_2 \neq \emptyset$. Let $\vect{f'}$
be the vector in $F$ such that $f'_a = k-2$, and $f'_b = f'_c = 1$. Let $\vect{f''}$ be the vector
in $F$ such that $f''_a=k-3$, $f''_b=2$ and $f''_c = 1$. Let $\vect{f'''}$ be the vector in $F$
such that $f'''_a = k-4$, $f'''_b = 3$ and $f'''_c = 1$. Subtracting twice the congruence implied
by (\ref{MainACong}) when $\vect{f}=\vect{f''}$ from the sum of the two congruences implied by
(\ref{MainACong}) when $\vect{f}=\vect{f'}$ and $\vect{f}=\vect{f'''}$ we see that
$$2x_{ab}+2x_{ba} \equiv 2x_{aa} + 2x_{bb}.$$
Thus,
\begin{equation}\label{CrossToPureACong}
2x_{ij}+2x_{ji} \equiv 2x_{ii} + 2x_{jj} \quad \hbox{ for all } i,j \in G,
\end{equation}
noting that the congruence is true trivially if $i=j$.

Let $a \in G \setminus \{1\}$ and let $b \in G_2 \setminus \{a\}$. If $k$ is odd, let
$\vect{f^\dag}$ be the vector in $F$ such that $f^\dag_1 = \frac{k-1}{2}$, $f^\dag_a =
\frac{k-3}{2}$ and $f^\dag_b = 2$, and let $\vect{f^\ddag}$ be the vector in $F$ such that
$f^\ddag_1 = \frac{k-3}{2}$, $f^\ddag_a = \frac{k-1}{2}$ and $f^\ddag_b = 2$. If $k$ is even, let
$\vect{f^\dag}$ be the vector in $F$ such that $f^\dag_1 = \frac{k}{2}$, $f^\dag_a =
\frac{k-2}{2}$ and $f^\dag_b = 1$, and let $\vect{f^\ddag}$ be the vector in $F$ such that
$f^\ddag_1 = \frac{k-2}{2}$, $f^\ddag_a = \frac{k}{2}$ and $f^\ddag_b = 1$. Subtracting the
congruence implied by (\ref{MainACong}) when $\vect{f}=\vect{f^\ddag}$ from the congruence implied
by (\ref{MainACong}) when $\vect{f}=\vect{f^\dag}$, doubling the resulting congruence if $k$ is
even, and then using (\ref{CrossToPureACong}) we see that
\begin{align*}
  (k-1)x_{11} &\equiv (k-1)x_{aa} \quad \hbox{if $k$ is odd, and} \\
  2(k-1)x_{11} &\equiv 2(k-1)x_{aa} \quad \hbox{if $k$ is even}.
\end{align*}
Thus,
\begin{align}\label{PureACongs}
  (k-1)x_{11} &\equiv (k-1)x_{ii} \quad \hbox{for all $i \in G$ if $k$ is odd, and} \nonumber \\
  2(k-1)x_{11} &\equiv 2(k-1)x_{ii} \quad \hbox{for all $i \in G$ if $k$ is even}.
\end{align}

Let $a \in G_2$ and let $\vect{f^*}$ be the vector in $F$ such that $f^*_1 = k-2$ and $f^*_a = 2$.
Using both (\ref{CrossToPureACong}) and (\ref{PureACongs}), it is easy to see from the congruence
implied by (\ref{MainACong}) when $\vect{f}=\vect{f^*}$ that $k(k-1)x_{11} \equiv 0$ and thus,
since $t$ satisfies (ii), we have
\begin{equation}\label{OnesACong}
\lambda g^2t(t-1)x_{11}\equiv 0.
\end{equation}

So, using (\ref{CrossToPureACong}), (\ref{PureACongs}) and (\ref{OnesACong}), noting that $\lambda
t(t-1)$ is a multiple of $2$, that $\lambda gt(t-1)$ is a multiple of $k-1$ if $k$ is odd (by
(i)), and that $\lambda gt(t-1)$ is a multiple of $2(k-1)$ if $k$ is even (by (i)), we have
$$\sum_{i,j} \lambda t(t-1)x_{ij} \equiv \sum_{i} \lambda gt(t-1)x_{ii} \equiv \lambda g^2t(t-1)x_{11} \equiv 0$$
as required.

{\bf Proof of (b).} Let $\vect{f}$ be a vector in $F$, let $x$ be a vertex of $H_{\vect{f}}$ and
let $\ell$ be the element of $G$ such that $x \in V_{\ell}$ (where $(V_1,V_2,\ldots,V_g)$ is the
ordered partition of $V(H_{\vect{f}})$ in the definition of $H_{\vect{f}}$). Then the
$((\ell,\ell),1)$ and $((\ell,\ell),2)$ components of $\tau(H_{\vect{f}},x)$ are $f_\ell-1$, the
$((i,\ell),1)$ and $((\ell,i),2)$ components of $\tau(H_{\vect{f}},x)$ are $f_i$ for all $i \in G
\setminus \{\ell\}$, and all the other components of $\tau(H_{\vect{f}},x)$ are $0$. Thus by Lemma
\ref{IntLC} it suffices to prove that for any list of $2g^2$ rational numbers
$\{x_{ij},y_{ij}\}_{i,j \in \{1,2,\ldots,g\}}$ satisfying
\begin{equation}\label{MainBCong}
(f_\ell-1)(x_{\ell\ell}+y_{\ell\ell}) + \sum_{i\neq \ell} f_i(x_{i\ell}+y_{\ell i}) \equiv 0 \quad \hbox{for each $\vect{f} \in F$ and $\ell \in G$ such that $f_\ell \geq 1$,}
\end{equation}
we have that
$$\sum_{i,j} \lambda (t-1)(x_{ij}+y_{ij}) \equiv 0.$$

Let $a$ and $b$ be distinct elements of $G$. Let $c$ be an element of $G \setminus \{a,b\}$ such
that $\{a,b,c\} \cap G_1 \neq \emptyset$ and $\{a,b,c\} \cap G_2 \neq \emptyset$. Let $\vect{f'}$
be the vector in $F$ such that $f'_a = k-2$, and $f'_b = f'_c = 1$. Let $\vect{f''}$ be the vector
in $F$ such that $f''_a=k-3$, $f''_b=2$ and $f''_c = 1$. Subtracting the congruence implied by
(\ref{MainBCong}) when $\vect{f}=\vect{f''}$ and $\ell=a$ from the congruence implied by
(\ref{MainBCong}) when $\vect{f}=\vect{f'}$ and $\ell=a$ we see that
$$x_{aa}+y_{aa} \equiv x_{ba} + y_{ab}.$$
Thus,
\begin{equation}\label{CrossToPureBCong}
x_{ii}+y_{ii} \equiv x_{ji} + y_{ij} \quad \hbox{ for all } i,j \in G.
\end{equation}
Using (\ref{CrossToPureBCong}), it is easy to see from the congruence implied by (\ref{MainBCong})
when $\vect{f}=\vect{f'}$ and $\ell=a$ that $(k-1)(x_{aa}+y_{aa}) \equiv 0$. Thus, we have
\begin{equation}\label{PureBCong}
(k-1)(x_{ii}+y_{ii}) \equiv 0 \quad \hbox{ for all  $i \in G$}.
\end{equation}

So, using (\ref{CrossToPureBCong}) and (\ref{PureBCong}), noting that $\lambda g(t-1)$ is a
multiple of $k-1$ (by (i)), we have
$$\sum_{i,j} \lambda (t-1)(x_{ij}+y_{ij}) \equiv \sum_{i} \lambda g(t-1)(x_{ii}+y_{ii}) \equiv 0$$
as required.

{\bf Proof of (c).} Let $\vect{p}= \sum_{\vect{f} \in F} \mu(H_{\vect{f}})$. Clearly $\epsilon
\vect{p}$ is a positive rational linear combination of the vectors in $\{\mu(H):H\in
\mathcal{H}\}$ for any positive rational $\epsilon$. Thus, it suffices to show that, for some
small positive rational number $\epsilon$, $\vect{1_{g^2}}-\epsilon\vect{p}$ is a non-negative
rational combination of the vectors in $\{\mu(H_{\vect{f}}):\vect{f} \in F\}$.

\emph{The case $g$ is odd.} Let $\ell$ be the integer such that $g=2\ell+1$. We will say that a
vector indexed by $G \times G$ is of type $(z_1,z_2,z_3,z_4,z_5)$, provided that for all $(i,j)
\in G \times G$ its $(i,j)$ component is $u_{ij}$ where
    $$ u_{ij} = \left\{
      \begin{array}{ll}
        z_1, & \hbox{if $i=j$ and $i \in G_1 \cup G_2$;} \\
        z_2, & \hbox{if $i \neq j$ and either $\{i,j\} \subseteq G_1$ or $\{i,j\} \subseteq G_2$;} \\
        z_3, & \hbox{if $i \neq j$ and either $(i,j) \in G_1 \times G_2$ or $(j,i) \in G_1 \times G_2$;} \\
        z_4, & \hbox{if $i=j=g$;} \\
        z_5, & \hbox{if $i \neq j$ and either $i=g$ or $j=g$.}
      \end{array}
    \right.$$
It can be seen that $\vect{p}$ is of type $(p_1,p_2,p_3,p_4,p_5)$ for some non-negative integers
$p_1,p_2,p_3,p_4,p_5$.

\emph{The case $g$ is odd and $g \geq 2k-1$.}  Note that $\ell \geq k-1 \geq 4$ in this case. We
define $F_1$, $F_2$, $F_3$, $F_4$ and $F_5$ to be subsets of $F$, as follows.
\begin{align*}
  F_1 &= \{\vect{f} \in F : f_i = k-1 \hbox{ for some } i \in G_1 \cup G_2\} \\
  F_2 &= \{\vect{f} \in F : f_i \leq 1 \hbox{ for all } i \in G, \{\textstyle{\sum_{i \in G_1}} f_i,\textstyle{\sum_{i \in G_2}} f_i\} = \{1,k-1\}\} \\
  F_3 &= \{\vect{f} \in F : f_i \leq 1 \hbox{ for all } i \in G, \{\textstyle{\sum_{i \in G_1}} f_i,\textstyle{\sum_{i \in G_2}} f_i\} = \{\lfloor\tfrac{k}{2}\rfloor,\lceil\tfrac{k}{2}\rceil\}\} \\
  F_4 &= \{\vect{f} \in F : f_g = k-2\} \\
  F_5 &= \{\vect{f} \in F : f_i \leq 1 \hbox{ for all } i \in G, \{\textstyle{\sum_{i \in G_1}} f_i,\textstyle{\sum_{i \in G_2}} f_i\} = \{\lfloor\tfrac{k-1}{2}\rfloor,\lceil\tfrac{k-1}{2}\rceil\}\}
\end{align*}

Through routine but tedious counting it can be calculated that $\frac{1}{|F_i|}\sum_{\vect{f} \in
F_i} \mu(H_{\vect{f}})$ is of type $\vect{a}$ if $i=1$, $\vect{b}$ if $i=2$, $\vect{c}$ if $i=3$,
$\vect{d}$ if $i=4$, and $\vect{e}$ if $i=5$, where
\begin{align*}
  \vect{a} &= (\tfrac{(k-1)(k-2)}{2\ell},0,\tfrac{k-1}{\ell^2},0,0), \\
  \vect{b} &= (0,\tfrac{(k-1)(k-2)}{2\ell(\ell-1)},\tfrac{k-1}{\ell^2},0,0), \\
  \vect{c} &= (0,\tfrac{1}{2\ell(\ell-1)}(\lceil\tfrac{k}{2}\rceil\lceil\tfrac{k-2}{2}\rceil+\lfloor\tfrac{k}{2}\rfloor\lfloor\tfrac{k-2}{2}\rfloor),\tfrac{1}{\ell^2}(\lceil\tfrac{k}{2}\rceil\lfloor\tfrac{k}{2}\rfloor),0,0), \\
  \vect{d} &= (0,0,\tfrac{1}{\ell^2},(k-2)(k-3),\tfrac{k-2}{\ell}), \\
  \vect{e} &= (0,\tfrac{1}{2\ell(\ell-1)}(\lceil\tfrac{k-1}{2}\rceil\lceil\tfrac{k-3}{2}\rceil+\lfloor\tfrac{k-1}{2}\rfloor\lfloor\tfrac{k-3}{2}\rfloor),\tfrac{1}{\ell^2}(\lceil\tfrac{k-1}{2}\rceil\lfloor\tfrac{k-1}{2}\rfloor),0,\tfrac{k-1}{2\ell}).
\end{align*}
To show that $\vect{1_{g^2}}-\epsilon\vect{p}$ is a non-negative rational linear combination of
vectors in $\{\mu(H):H\in \mathcal{H}\}$ it suffices to show that $(1-\epsilon p_1,1-\epsilon
p_2,1-\epsilon p_3,1-\epsilon p_4,1-\epsilon p_5)$ is a non-negative rational combination of
$\vect{a}$, $\vect{b}$, $\vect{c}$, $\vect{d}$ and $\vect{e}$.

Simple calculations give us that
$$\tfrac{2\ell(1-\epsilon p_1)}{(k-1)(k-2)}\vect{a}+\tfrac{(1-\epsilon p_4)}{(k-2)(k-3)}\vect{d}+\tfrac{2\ell(k-3)(1-\epsilon p_5)-2(1-\epsilon p_4)}{(k-1)(k-3)}\vect{e}=(1-\epsilon p_1,y_2,y_3,1-\epsilon p_4,1-\epsilon p_5) \quad\hbox{where}$$
\begin{align*}
  \lim_{\epsilon \rightarrow 0} y_2 &= x_2, & x_2 &= \tfrac{\ell(k-3)-1}{\ell(\ell-1)(k-1)(k-3)}(\lceil\tfrac{k-1}{2}\rceil\lceil\tfrac{k-3}{2}\rceil+\lfloor\tfrac{k-1}{2}\rfloor\lfloor\tfrac{k-3}{2}\rfloor) \quad \hbox{and} \\
  \lim_{\epsilon \rightarrow 0} y_3 &= x_3, & x_3 &= \tfrac{1}{\ell^2(k-2)(k-3)}+\tfrac{2}{\ell(k-2)}+\tfrac{2(\ell(k-3)-1)}{\ell^2(k-1)(k-3)}(\lceil\tfrac{k-1}{2}\rceil\lfloor\tfrac{k-1}{2}\rfloor).
\end{align*}
So it suffices to show that $(0,1-\epsilon p_2-y_2,1-\epsilon p_3-y_3,0,0)$ is a non-negative
rational combination of $\vect{b}$ and $\vect{c}$ and hence, since $b_2$, $1-\epsilon p_2-y_2$ and
$c_2$ are all positive for sufficiently small positive values of $\epsilon$, it suffices to show
that
$$\frac{b_3}{b_2} \leq \frac{1-\epsilon p_3-y_3}{1-\epsilon p_2-y_2} \leq \frac{c_3}{c_2}.$$
Now $\lim_{\epsilon \rightarrow 0} 1-\epsilon p_2-y_2=1-x_2$ and $\lim_{\epsilon \rightarrow 0}
1-\epsilon p_3-y_3=1-x_3$, so it suffices to show that
$$\frac{b_3}{b_2} < \frac{1-x_3}{1-x_2} < \frac{c_3}{c_2}.$$
Let $\Delta_1=b_2(1-x_3)-b_3(1-x_2)$ and $\Delta_2=c_3(1-x_2)-c_2(1-x_3)$. We will show that
$\Delta_1$ and $\Delta_2$ are both positive. Substituting in for $b_2$, $b_3$, $c_2$, $c_3$, $x_2$
and $x_3$ and simplifying yields that when $k$ is even
\begin{align*}
  \Delta_1 &= \tfrac{1}{4\ell^3(\ell-1)(k-3)}(2\ell(\ell-k+1)(k-1)(k-3)(k-4)+k(\ell(k-3)+1)(k^2-6k+6)+(4k-6)) \hbox{ and} \\
  \Delta_2 &= \tfrac{k}{4\ell^3(\ell-1)(k-3)}(2\ell(\ell-k+1)(k-3)+k\ell(k-3)+1),
\end{align*}
and when $k$ is odd
\begin{align*}
  \Delta_1 &= \tfrac{k-1}{4\ell^3(\ell-1)}(2\ell(\ell-k+1)(k-4)+k\ell(k-5)+k-2)
\hbox{ and} \\
  \Delta_2 &= \tfrac{k-1}{4\ell^3(\ell-1)(k-2)}(2\ell(\ell-k+1)(k-2)+k\ell(k-1)-1).
\end{align*}
Given that $k \geq 5$ and that $\ell \geq k-1$, it is now routine to confirm that $\Delta_1$ and
$\Delta_2$ are positive, as required.

\emph{The case $g$ is odd and $g = k-1$.} Note that $k \geq 6$ and $\ell = \frac{k-2}{2} \geq 2$
in this case. We first deal with the case where $\ell \geq 4$. Define $F_1$, $F_2$, $F_3$, $F_4$
and $F_5$ to be subsets of $F$, as follows.
\begin{align*}
  F_1 =& \{\vect{f} \in F : f_i = 2\ell+1 \hbox{ for some } i \in G_1 \cup G_2\} \\
  F_2 =& \{\vect{f} \in F : f_i = f_j = \ell+1 \hbox{ for some } i,j \in G\} \\
  F_3 =& \{\vect{f} \in F : \hbox{for some } i,j,m \hbox{ such that } \{\{i,j,m\} \cap G_1,\{i,j,m\} \cap G_2\} = \{\{i\},\{j,m\}\}, \\[-0.2cm]
  &(f_i,f_j,f_m)=(0,2,3) \hbox{ and } f_h=1 \hbox{ for each } h \in (G_1 \cup G_2) \setminus \{i,j,m\}\} \\ 
  F_4 =& \{\vect{f} \in F : f_g = 3, f_i \leq 1 \hbox{ for all } i \in G_1 \cup G_2\} \\
  F_5 =& \{\vect{f} \in F : f_g = 1, f_i \in \{1,2\} \hbox{ for all } i \in G_1 \cup G_2\}
\end{align*}

Through routine but tedious counting it can be calculated that $\frac{1}{|F_i|}\sum_{\vect{f} \in
F_i} \mu(H_{\vect{f}})$ is of type $\vect{a}$ if $i=1$, $\vect{b}$ if $i=2$, $\vect{c}$ if $i=3$,
$\vect{d}$ if $i=4$, and $\vect{e}$ if $i=5$, where
\begin{align*}
  \vect{a} &= (2\ell+1,0,\tfrac{2\ell+1}{\ell^2},0,0), \\
  \vect{b} &= (\ell+1,0,\tfrac{(\ell+1)^2}{\ell^2},0,0), \\
  \vect{c} &= (\tfrac{4}{\ell},\tfrac{\ell+1}{\ell-1},\tfrac{(\ell-1)(\ell+3)}{\ell^2},0,0), \\
  \vect{d} &= (0,\tfrac{\ell-1}{\ell},\tfrac{\ell-1}{\ell},6,\tfrac{6\ell-3}{2\ell}), \\
  \vect{e} &= (\tfrac{1}{\ell},\tfrac{\ell+1}{\ell},\tfrac{\ell+1}{\ell},0,\tfrac{2\ell+1}{2\ell}).
\end{align*}
Again, to show that $\vect{1_{g^2}}-\epsilon\vect{p}$ is a non-negative rational linear
combination of vectors in $\{\mu(H):H\in \mathcal{H}\}$ it suffices to show that $(1-\epsilon
p_1,1-\epsilon p_2,1-\epsilon p_3,1-\epsilon p_4,1-\epsilon p_5)$ is a non-negative rational
combination of $\vect{a}$, $\vect{b}$, $\vect{c}$, $\vect{d}$ and $\vect{e}$.

Simple calculations give us that
$$\tfrac{(\ell-1)((1-\epsilon p_4)(2\ell^2+2\ell-1)-6(1-\epsilon p_5)(\ell^2+\ell)+3(1-\epsilon p_2)(2\ell^2+\ell))}{3\ell(\ell+1)(2\ell+1)}\vect{c}+\tfrac{(1-\epsilon p_4)}{6}\vect{d}+\tfrac{4\ell(1-\epsilon p_5)-(2\ell-1)(1-\epsilon p_4)}{4\ell+2}\vect{e}$$
is equal to $(y_1,1-\epsilon p_2,y_3,1-\epsilon p_4,1-\epsilon p_5)$ where
\begin{align*}
  \lim_{\epsilon \rightarrow 0}y_1 &= x_1, & x_1 &= \tfrac{11\ell^2-13\ell+8}{6\ell^2(\ell+1)} \quad \hbox{and} \\
  \lim_{\epsilon \rightarrow 0}y_3 &= x_3, & x_3 &= \tfrac{3\ell^4+3\ell^3-5\ell^2+8\ell-3}{3\ell^3(\ell+1)}.
\end{align*}
So it suffices to show that $(1-\epsilon p_1-y_1,0,1-\epsilon p_3-y_3,0,0)$ is a non-negative
rational combination of $\vect{a}$ and $\vect{b}$. For $\ell \geq 4$, this can be shown in a
similar manner to that used in the case where $g \geq 2k-1$.

The case $\ell \in \{2,3\}$ can be dealt with similarly, except that we also define
    $$F'_5 = \{\vect{f} \in F : \{(\textstyle{\sum_{i \in G_1}} f_i,|\{i \in G_1:f_i=1\}|),(\textstyle{\sum_{i \in G_2}f_i,|\{i \in G_2:f_i=1\}|)} \} = \{(2\ell,\ell-1),(1,1)\}\},$$
note that $\frac{1}{|F'_5|}\sum_{\vect{f} \in F'_5} \mu(H_{\vect{f}})$ is of type $\vect{e'}$ where
$\vect{e'} = (\tfrac{\ell+1}{2},\tfrac{3}{2},\tfrac{2}{\ell},0,\tfrac{2\ell+1}{2\ell})$, and
include \linebreak $(\tfrac{4\ell(1-\epsilon p_5)-(2\ell-1)(1-\epsilon
p_4)}{4\ell+2})(\frac{3}{4}\vect{e}+\frac{1}{4}\vect{e'})$ in our linear combination rather than
$\tfrac{4\ell(1-\epsilon p_5)-(2\ell-1)(1-\epsilon p_4)}{4\ell+2}\vect{e}$.

\emph{The case $g$ is even.} The arguments in this case are similar to, but less complicated than,
those made in the case where $g$ is odd. Let $\ell$ be the integer such that $g=2\ell$. We will
say that a vector indexed by $G \times G$ is of type $(z_1,z_2,z_3)$, provided that for all $(i,j)
\in G \times G$ its $(i,j)$ component is $u_{ij}$ where
    $$ u_{ij} = \left\{
      \begin{array}{ll}
        z_1, & \hbox{if $i=j$ and $i \in G_1 \cup G_2$;} \\
        z_2, & \hbox{if $i \neq j$ and either $\{i,j\} \subseteq G_1$ or $\{i,j\} \subseteq G_2$;} \\
        z_3, & \hbox{if $i \neq j$ and either $(i,j) \in G_1 \times G_2$ or $(j,i) \in G_1 \times G_2$.} \\
      \end{array}
    \right.$$
It can be seen that $\vect{p}$ is of type $(p_1,p_2,p_3)$ for some non-negative integers
$p_1,p_2,p_3$.

\emph{The case $g$ is even and $g \geq 2k-2$.}  Note that $\ell \geq k-1 \geq 4$ in this case. We
define $F_1$, $F_2$ and $F_3$ as follows.
\begin{align*}
  F_1 &= \{\vect{f} \in F : f_i = k-1 \hbox{ for some } i \in G \} \\
  F_2 &= \{\vect{f} \in F : f_i \leq 1 \hbox{ for all } i \in G, \{\textstyle{\sum_{i \in G_1}} f_i,\textstyle{\sum_{i \in G_2}} f_i\} = \{1,k-1\}\} \\
  F_3 &= \{\vect{f} \in F : f_i \leq 1 \hbox{ for all } i \in G, \{\textstyle{\sum_{i \in G_1}} f_i,\textstyle{\sum_{i \in G_2}} f_i\} = \{\lfloor\tfrac{k}{2}\rfloor,\lceil\tfrac{k}{2}\rceil\}\}
\end{align*}
The proof proceeds along similar lines to the cases where $g$ is odd (though it is less
complicated).

\emph{The case $g$ is even and $g =k-1$.} Note that $\ell = \frac{k-1}{2} \geq 2$ in this case. We
define $F_1$, $F_2$ and $F_3$ as follows.
\begin{align*}
  F_1 =& \{\vect{f} \in F : f_i = 2\ell \hbox{ for some } i \in G\} \\
  F_2 =& \{\vect{f} \in F : f_i = \ell, f_j = \ell+1 \hbox{ for some } i,j \in G\} \\
  F_3 =& \{\vect{f} \in F : \hbox{for some } i,j,m \hbox{ such that } \{\{i,j,m\} \cap G_1,\{i,j,m\} \cap G_2\} = \{\{i\},\{j,m\}\}, \\[-0.2cm]
  &(f_i,f_j,f_m)=(0,2,2) \hbox{ and } f_h=1 \hbox{ for each } h \in (G_1 \cup G_2) \setminus \{i,j,m\}\}
\end{align*}
When $\ell \geq 4$, the proof proceeds along similar lines to the cases where $g$ is odd (though
it is less complicated). When $\ell \in \{2,3\}$, we proceed similarly except that we also make
use of
$$F'_3 = \{\vect{f} \in F : f_i \in \{1,2\} \hbox{ for all } i \in G\}.$$
\end{proof}

With more work, the restriction that either $g=k-1$ or $g \geq 2k-2$ in Lemma \ref{ManyGroupGDD}
could certainly be loosened. The above result suffices for our purposes here, however.

\begin{lemma}\label{ManyGroupGDDk=4}
Let $\lambda$ and $g$ be positive integers such that $g \geq 6$ and  $g$ is even. Then for each
sufficiently large integer $t$ satisfying $\lambda g(t-1) \equiv 0 \mod 12)$; there exists a
$(4,\lambda)$-GDD of type $g^t$ which has a $2$-blocking system such that each set of the blocking
system intersects each group of the GDD in exactly $\frac{g}{2}$ points.
\end{lemma}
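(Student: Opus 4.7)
The plan is to adapt the proof of Lemma \ref{ManyGroupGDD} to $k=4$ using the same framework. Set $G=\{1,\ldots,g\}$, $G_1=\{1,\ldots,\tfrac{g}{2}\}$, $G_2=\{\tfrac{g}{2}+1,\ldots,g\}$, $R=G\times G$, and let $F$ be the set of nonnegative integer vectors $\vect{f}$ with $f_1+\cdots+f_g=4$ and $\sum_{i\in G_1}f_i\geq 1$, $\sum_{i\in G_2}f_i\geq 1$. Define $H_{\vect{f}}$ and $\mathcal{H}=\{H_{\vect{f}}:\vect{f}\in F\}$ exactly as in Lemma \ref{ManyGroupGDD}. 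An $\mathcal{H}$-decomposition of $\lambda K^{(R)}_t$ yields a $(4,\lambda)$-GDD of type $g^t$ carrying the desired $2$-blocking system, so by Theorem \ref{LamWilTheorem} it suffices to establish the three statements (a), (b), (c) from the proof of Lemma \ref{ManyGroupGDD} with $k=4$.

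For (a) and (b) the arguments from Lemma \ref{ManyGroupGDD} carry over essentially verbatim with $k=4$. The auxiliary vectors $\vect{f'},\vect{f''},\vect{f'''},\vect{f^\dag},\vect{f^\ddag},\vect{f^*}$ all remain in $F$ (noting that $f'''_a=k-4=0$ is permitted and $\tfrac{g}{2}\geq 3$ leaves room to pick the required indices), and the same manipulations yield the analogues of (\ref{CrossToPureACong}), (\ref{PureACongs}), (\ref{OnesACong}) and of (\ref{CrossToPureBCong}), (\ref{PureBCong}). The single hypothesis $12\mid\lambda g(t-1)$ of the present lemma implies $3=k-1\mid\lambda g(t-1)$, $6=2(k-1)\mid\lambda g t(t-1)$ and $12=k(k-1)\mid\lambda g^2t(t-1)$, which are exactly the divisibility facts invoked in the concluding steps of (a) and (b).

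The main work is in (c), the step from which Lemma \ref{ManyGroupGDD} excluded $k=4$. Mimicking the ``$g$ even and $g\geq 2k-2$'' template there, I would take
\begin{align*}
F_1 &= \{\vect{f}\in F : f_i = 3 \text{ for some } i \in G\}, \\
F_2 &= \{\vect{f}\in F : f_i \leq 1 \text{ for all } i,\ \{\textstyle\sum_{i \in G_1} f_i,\,\sum_{i \in G_2} f_i\} = \{1,3\}\}, \\
F_3 &= \{\vect{f}\in F : f_i \leq 1 \text{ for all } i,\ \textstyle\sum_{i \in G_1} f_i = \sum_{i \in G_2} f_i = 2\},
\end{align*}
each of which is nonempty since $\tfrac{g}{2}\geq 3$. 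Each $\tfrac{1}{|F_i|}\sum_{\vect{f}\in F_i}\mu(H_{\vect{f}})$ is a vector of the ``type $(z_1,z_2,z_3)$'' form defined in Lemma \ref{ManyGroupGDD}, and the same combinatorial reduction reduces (c) to verifying that two explicit rational expressions (the $k=4$ analogues of $\Delta_1,\Delta_2$) are positive for every $\ell=\tfrac{g}{2}\geq 3$.

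The main obstacle is this last positivity check. The corresponding inequalities in Lemma \ref{ManyGroupGDD} were established under $\ell\geq k-1\geq 4$, whereas here $\ell$ can be as small as $3$. One therefore has to recompute the closed-form expressions with $k=4$ substituted and verify positivity by hand for $\ell\geq 3$; the resulting expressions are low-degree polynomials in $\ell$ with positive leading coefficient, so only finite routine bookkeeping is needed and no genuinely new ideas beyond those in the proof of Lemma \ref{ManyGroupGDD} should be required.
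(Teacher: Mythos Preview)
Your handling of (a) and (b) is fine: with the larger $F$ that allows $\{2,2\}$ splits as well as $\{1,3\}$ splits, all the auxiliary vectors $\vect{f'},\vect{f''},\vect{f'''},\vect{f^\dag},\vect{f^\ddag},\vect{f^*}$ from Lemma~\ref{ManyGroupGDD} lie in $F$ when $k=4$ and $g/2\geq 3$, and the hypothesis $12\mid\lambda g(t-1)$ gives the divisibilities needed.

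The gap is in (c). Your three families $F_1,F_2,F_3$ yield type vectors
\[
\vect{a}=(\tfrac{3}{\ell},0,\tfrac{3}{\ell^2}),\qquad
\vect{b}=(0,\tfrac{3}{\ell(\ell-1)},\tfrac{3}{\ell^2}),\qquad
\vect{c}=(0,\tfrac{2}{\ell(\ell-1)},\tfrac{4}{\ell^2}),
\]
which are linearly independent, so the representation of $(1-\epsilon p_1,1-\epsilon p_2,1-\epsilon p_3)$ as $\alpha\vect{a}+\beta\vect{b}+\gamma\vect{c}$ is \emph{unique}; there are no ``$\Delta_1,\Delta_2$'' inequalities to check, only the signs of $\alpha,\beta,\gamma$. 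At $\epsilon=0$ one finds $(1,1,1)=\tfrac{\ell}{3}\vect{a}+\tfrac{\ell(\ell-1)}{3}\vect{b}+0\cdot\vect{c}$, so $(1,1,1)$ sits on the boundary of the cone. A direct computation gives $2\gamma=\ell\bigl(p_1+(\ell-1)p_2-\ell p_3\bigr)\epsilon$, and evaluating $p_1,p_2,p_3$ for your $F$ yields $p_1+(\ell-1)p_2-\ell p_3=-\tfrac{\ell(\ell+1)^2}{2}<0$. Hence $\gamma<0$ for every small $\epsilon>0$, and your chosen $F_1,F_2,F_3$ cannot realise $\vect{1_{g^2}}-\epsilon\vect{p}$ as a non-negative combination. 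The claimed ``low-degree polynomials in $\ell$ with positive leading coefficient'' do not materialise here.

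The paper avoids this by \emph{shrinking} $F$ to allow only $\{1,3\}$ splits. This forces new arguments for (a) and (b) (since, e.g., your $\vect{f^*}$ with $f^*_1=f^*_a=2$ is now excluded), but it makes (c) immediate: one partitions this smaller $F$ as $F_1\cup F_2\cup F_3$ according to whether the maximum entry is $3$, $2$, or $1$, and exhibits the explicit identity
\[
\tfrac{\ell}{6|F_1|}\sum_{F_1}\mu+\tfrac{\ell}{2|F_2|}\sum_{F_2}\mu+\tfrac{\ell(\ell-2)}{3|F_3|}\sum_{F_3}\mu=\vect{1_{g^2}},
\]
with every $\vect{f}\in F$ receiving a strictly positive coefficient (since $\ell\geq 3$), so no $\epsilon\vect{p}$ trick is needed.
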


\begin{proof}[{\bf Proof}]
The proof proceeds along similar lines to the proof of Lemma \ref{ManyGroupGDD}, so we highlight
only the points of difference. Let $F$ be the set of all $g$-dimensional integral vectors
$\vect{f}$ such that
\begin{itemize}
    \item
$f_i \geq 0$ for each $i \in \{1,2,\ldots,g\}$;
    \item
$f_1+f_2+\cdots+f_g=4$; and
    \item
$\{f_1 + f_2 + \cdots + f_{\frac{g}{2}},f_{\frac{g}{2}+1} + f_{\frac{g}{2}+2} + \cdots + f_{g}\}
= \{1,3\}$.
\end{itemize}

{\bf Proof of (a).} Let $a$, $b$, $c$ and $d$ be distinct elements of $G$ such that either $a,b,c
\in G_1$ and $d \in G_2$ or $d \in G_1$ and $a,b,c \in G_2$. Let $\vect{f'}$, $\vect{f''}$,
$\vect{f'''}$, $\vect{f^\dag}$ and $\vect{f^*}$ be the vectors in $F$ such that $(f'_a,f'_b,f'_d)
= (2,1,1)$, $(f''_a,f''_b,f''_d) = (1,2,1)$, $(f'''_b,f'''_d) = (3,1)$, $(f^\dag_b,f^\dag_d) =
(1,3)$ and $(f^*_a,f^*_b,f^*_c,f^*_d)=(1,1,1,1)$. The congruence implied by (\ref{MainACong}) when
$\vect{f}=\vect{f'''}$ yields $3x_{bd}+3x_{db} \equiv -6x_{bb}$. Thus,
\begin{equation}\label{ExtCrossToPureACong}
3x_{ij}+3x_{ji} \equiv -6x_{jj} \quad \hbox{ for all } i,j \in G, \hbox{ such that } |\{i,j\} \cap G_1|=1.
\end{equation}
Subtracting the congruence implied by (\ref{MainACong}) when $\vect{f}=\vect{f^\dag}$ from the the
congruence implied by (\ref{MainACong}) when $\vect{f}=\vect{f'''}$ we see that $6x_{bb} -6x_{dd}
\equiv 0$. Thus,
\begin{equation}\label{PureACongk=4}
6x_{ii} -6x_{jj} \equiv 0 \quad \hbox{ for all } i,j \in G \hbox{ such that } |\{i,j\} \cap G_1|=1.
\end{equation}
Subtracting twice the congruence implied by (\ref{MainACong}) when $\vect{f}=\vect{f''}$ from the
sum of the two congruences implied by (\ref{MainACong}) when $\vect{f}=\vect{f'}$ and
$\vect{f}=\vect{f'''}$ we see that $2x_{ab}+2x_{ba} \equiv 2x_{aa} + 2x_{bb}$. Thus,
\begin{equation}\label{IntCrossToPureACong}
2x_{ij}+2x_{ji} \equiv 2x_{ii} + 2x_{jj} \quad \hbox{ for all distinct } i,j \in G \hbox{ such that } |\{i,j\} \cap G_1|\in \{0,2\}.
\end{equation}
Finally, applying (\ref{IntCrossToPureACong}) to twice the congruence implied by (\ref{MainACong})
when $\vect{f}=\vect{f^*}$ yields $2x_{ad}+2x_{da}+2x_{bd}+2x_{db}+2x_{cd}+2x_{dc} \equiv
-4x_{aa}-4x_{bb}-4x_{cc}$. Thus,
\begin{multline}\label{Mess}
2x_{im}+2x_{mi}+2x_{jm}+2x_{mj}+2x_{km}+2x_{mk} \equiv -4x_{ii}-4x_{jj}-4x_{kk} \\
\hbox{ for all distinct } i,j,k,m \in G \hbox{ such that } \{i,j,k,m\} \cap G_1\in \{\{i,j,k\},\{m\}\}.
\end{multline}

Combining these facts we see that
\begin{align*}
  \sum_{i,j} \lambda t(t-1)x_{ij} &\equiv \sum_{i} \lambda \tfrac{g}{2}t(t-1)x_{ii} + \sum_{i \in G_1,j \in G_2} \lambda t(t-1)(x_{ij}+x_{ji}) \quad (\hbox{using } (\ref {IntCrossToPureACong})) \\
  &\equiv \sum_{i \in G_1} \lambda \tfrac{g}{2}t(t-1)x_{ii} - \sum_{j \in G_2} \lambda \tfrac{g}{2}t(t-1)x_{jj} \quad (\hbox{using } (\ref{ExtCrossToPureACong}) \hbox{ or } (\ref {Mess})) \\
  &\equiv 0 \quad (\hbox{using } (\ref{PureACongk=4})).
\end{align*}
In the above we apply (\ref{ExtCrossToPureACong}) when $3$ does not divide $g$ and hence $3$
divides $\lambda(t-1)$ by our hypotheses and we apply (\ref{Mess}) when $3$ divides $g$ and hence
$3$ divides $\frac{g}{2}$.

{\bf Proof of (b).} Let $a$, $b$ and $c$ be distinct elements of $G$ such that either $a,b \in
G_1$ and $c \in G_2$ or $c \in G_1$ and $a,b \in G_2$. Let $\vect{f'}$, $\vect{f''}$ and
$\vect{f'''}$ be the vectors in $F$ such that $(f'_a,f'_b,f'_c) = (2,1,1)$, $(f''_a,f''_b,f''_c) =
(1,2,1)$ and $(f'''_b,f'''_c) = (3,1)$. Subtracting the congruence implied by (\ref{MainBCong})
when $\vect{f}=\vect{f''}$ and $\ell=a$ from the congruence implied by (\ref{MainBCong}) when
$\vect{f}=\vect{f'}$ and $\ell=a$ we see that $x_{ba} + y_{ab} \equiv x_{aa}+y_{aa}$. Thus,
\begin{equation}\label{IntCrossToPureBCong}
x_{ji} + y_{ij} \equiv x_{ii}+y_{ii} \quad \hbox{ for all } i,j \in G \hbox{ such that } |\{i,j\} \cap G_1|\in \{0,2\}.
\end{equation}
The congruence implied by (\ref{MainBCong}) when $\vect{f}=\vect{f'''}$ and $\ell=b$ yields
$x_{cb}+y_{bc} \equiv -2x_{bb}-2y_{bb}$. Thus,
\begin{equation}\label{ExtCrossToPureBCong}
x_{ji}+y_{ij} \equiv -2x_{ii}-2y_{ii} \quad \hbox{ for all } i,j \in G, \hbox{ such that } |\{i,j\} \cap G_1|=1.
\end{equation}
Using (\ref{ExtCrossToPureBCong}), it is easy to see from the congruence implied by
(\ref{MainBCong}) when $\vect{f}=\vect{f'''}$ and $\ell=c$ that $6x_{cc}+6y_{cc} \equiv 0$. Thus,
\begin{equation}\label{PureBCongk=4}
6x_{ii}+6y_{ii} \equiv 0 \quad \hbox{ for all } i \in G.
\end{equation}

Combining these facts we see that
\begin{align*}
  \sum_{i,j} \lambda (t-1)(x_{ij}+y_{ij}) &\equiv \sum_{i} \lambda \tfrac{g}{2}(t-1)(x_{ii}+y_{ii}) + \sum_{i \in G_1,j \in G_2} \lambda (t-1)(x_{ij}+x_{ji}) \quad (\hbox{using } (\ref {IntCrossToPureBCong})) \\
  &\equiv -\sum_{i} \lambda \tfrac{g}{2}(t-1)(x_{ii}+y_{ii}) \quad (\hbox{using } (\ref {ExtCrossToPureBCong})) \\
  &\equiv 0 \quad (\hbox{using } (\ref{PureBCongk=4})).
\end{align*}
In the above, applying (\ref{PureBCongk=4}) requires noting that $\lambda \tfrac{g}{2}(t-1)$ is a
multiple of $6$ which follows from the hypotheses of the lemma.

{\bf Proof of (c).} Let $\ell$ be the integer such that $g=2\ell$. We define $F_1$, $F_2$ and
$F_3$ as follows.
\begin{align*}
  F_1 &= \{\vect{f} \in F : f_i = 3 \hbox{ for some } i \in G \} \\
  F_2 &= \{\vect{f} \in F : f_i = 2 \hbox{ for some } i \in G \} \\
  F_3 &= \{\vect{f} \in F : f_i \leq 1 \hbox{ for all } i \in G\}
\end{align*}
Routine calculation yields that
    $$\tfrac{\ell}{6|F_1|}\sum_{\vect{f} \in F_1} \mu(H_{\vect{f}})+
    \tfrac{\ell}{2|F_2|}\sum_{\vect{f} \in F_2} \mu(H_{\vect{f}})+
    \tfrac{\ell(\ell-2)}{3|F_3|}\sum_{\vect{f} \in F_3} \mu(H_{\vect{f}}) = \vect{1_{g^2}}.$$
Furthermore, $F=F_1 \cup F_2 \cup F_3$, which means that the left hand side of the above equation
is a linear combination of the vectors in $\{\mu(H):H \in \mathcal{H}\}$ with strictly positive
rational coefficients.
\end{proof}

Lemma \ref{ManyGroupGDDk=4} is an analogue of Lemma \ref{ManyGroupGDD} with the additional
restrictions that $g$ is even and $\lambda g (t-1) \equiv 0 \mod 4)$. To see that these conditions
are necessary in the case $k=4$, suppose there exists a $(4,\lambda)$-GDD of type $g^t$ which has
a $2$-blocking system $\{S_1,S_2\}$ such that $S_1$ and $S_2$ each intersect each group of the GDD
in exactly $\lfloor\frac{g}{2}\rfloor$ points. Every block of this GDD contains at least three
pairs of points which intersect $S_1$ in exactly one point, and thus at least half of the pairs of
points which appear in blocks of the GDD must intersect $S_1$ in exactly one point. It follows
that $g$ is even and that every block in the GDD intersects $S_1$ in one or three points. Thus,
for a fixed point $x \in S_1$ there are $\lambda \frac{g}{2} (t-1)$ pairs of points including $x$
and a point in $S_1$ in a different group to $x$ and each block of the GDD contains zero or two of
these pairs. It follows that $\lambda g (t-1) \equiv 0 \mod 4)$.

By combining Lemmas \ref{ManyGroupGDD} and \ref{ManyGroupGDDk=4} with some standard ``group
filling'' constructions, we can obtain the following two results.

\begin{lemma}\label{BasicConstructionsInfinity}
Let $y$, $k$ and $\lambda$ be positive integers such that $k \geq 4$, either $y=k$ or $y \geq
2k-1$, and if $k=4$ then $y$ is odd. If there exists a $(y,k,\lambda)$-BIBD which has a
$(\lfloor\frac{y-1}{2}\rfloor,\lfloor\frac{y-1}{2}\rfloor)$-blocking system, then, for each
sufficiently large integer $x$ such that $x(y-1)+1$ is $(k,\lambda)$-admissible,
\begin{itemize}
    \item[(a)]
there exists an $(x(y-1)+1,k,\lambda)$-BIBD which has an
$(x\lfloor\frac{y-1}{2}\rfloor,x\lfloor\frac{y-1}{2}\rfloor)$-blocking system; and
    \item[(b)]
there exists a $(k,\lambda)$-GDD of type $y^11^{(x-1)(y-1)}$ which has an
$(x\lfloor\frac{y-1}{2}\rfloor,x\lfloor\frac{y-1}{2}\rfloor)$-blocking system such that each
set of the blocking system intersects the group of size $y$ in exactly
$\lfloor\frac{y-1}{2}\rfloor$ points.
\end{itemize}
\end{lemma}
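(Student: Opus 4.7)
The plan is a standard ``group-filling'' construction on top of a GDD produced by Lemma~\ref{ManyGroupGDD} or Lemma~\ref{ManyGroupGDDk=4}. Setting $g=y-1$ and $t=x$, the hypotheses $y=k$ or $y\geq 2k-1$ translate precisely into $g=k-1$ or $g\geq 2k-2$, and when $k=4$ the stipulation that $y$ is odd gives $g$ even and $g\geq 6$ (since $y=k=4$ is then excluded). So for each sufficiently large admissible $x$, I can invoke the appropriate lemma to obtain a $(k,\lambda)$-GDD of type $(y-1)^{x}$ with groups $G_1,\ldots,G_x$ and a $2$-blocking system $\{S_1,S_2\}$ satisfying $|S_j\cap G_i|=\lfloor(y-1)/2\rfloor$ for all $i,j$.

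For part~(a), introduce a new point $\infty$ and, on each $G_i\cup\{\infty\}$, place a copy of the given $(y,k,\lambda)$-BIBD, relabelled so that the two sets of its $(\lfloor(y-1)/2\rfloor,\lfloor(y-1)/2\rfloor)$-blocking system coincide with $S_1\cap G_i$ and $S_2\cap G_i$ respectively and $\infty$ lies outside both (possible since $|S_1\cap G_i|+|S_2\cap G_i|\leq y-1<y$). A routine pair count (transversal pairs are covered by the GDD; within-group pairs and pairs $\{\infty,p\}$ are covered by the filled BIBDs) shows the union is an $(x(y-1)+1,k,\lambda)$-BIBD, and by construction every block meets both $S_1$ and $S_2$. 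For part~(b), apply the same filling only on $G_2,\ldots,G_x$, leaving $G_1$ alone; the result is a $(k,\lambda)$-GDD on the same point set whose groups are $G_1\cup\{\infty\}$ (of size $y$) and the singletons of $G_2\cup\cdots\cup G_x$. Since $\infty\notin S_1\cup S_2$, the sets $S_1$ and $S_2$ still intersect the large group in exactly $\lfloor(y-1)/2\rfloor$ points each, as required.

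The main step is verifying the divisibility conditions of Lemma~\ref{ManyGroupGDD} or Lemma~\ref{ManyGroupGDDk=4} with $g=y-1$ and $t=x$, given only that $y$ and $v=x(y-1)+1$ are both $(k,\lambda)$-admissible. Condition~(i), $\lambda(y-1)(x-1)\equiv 0\pmod{k-1}$, is immediate since $\lambda(y-1)\equiv 0\pmod{k-1}$ by admissibility of $y$. For condition~(ii) of Lemma~\ref{ManyGroupGDD}, expanding $\lambda v(v-1)\equiv 0\pmod{k(k-1)}$ with $v-1=x(y-1)$ gives $\lambda x^2(y-1)^2\equiv-\lambda x(y-1)\pmod{k(k-1)}$, whence $\lambda(y-1)^2 x(x-1)\equiv-\lambda xy(y-1)\equiv 0\pmod{k(k-1)}$ using admissibility of $y$ again. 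For Lemma~\ref{ManyGroupGDDk=4} one needs $12\mid\lambda(y-1)(x-1)$; here one exploits that $y$ is odd, hence invertible modulo $4$, so $\lambda y(y-1)\equiv 0\pmod 4$ already forces $\lambda(y-1)\equiv 0\pmod 4$, and combined with $\lambda(y-1)\equiv 0\pmod 3$ this gives $12\mid\lambda(y-1)$, so the condition holds for every $x$. This divisibility bookkeeping, particularly the $k=4$ case, is the only subtle point; the construction itself is routine once the GDD is in hand.
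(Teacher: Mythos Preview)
Your proposal is correct and follows exactly the same group-filling construction as the paper: build a $(k,\lambda)$-GDD of type $(y-1)^x$ via Lemma~\ref{ManyGroupGDD} or~\ref{ManyGroupGDDk=4}, adjoin a point $\infty$, and fill each (respectively, all but one) group with the given $(y,k,\lambda)$-BIBD so that the prescribed blocking system is respected. The only difference is cosmetic---the paper obtains (b) by first building the BIBD of (a) and then deleting the blocks on one filled group, whereas you leave that group unfilled from the start---and you spell out the divisibility verification (including the $k=4$ argument that $y$ odd forces $12\mid\lambda(y-1)$) that the paper merely asserts is ``easy to check''.
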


\begin{proof}[{\bf Proof}] Since $y$ is $(k,\lambda)$-admissible, it is easy to check that, for a sufficiently large
integer $x$ such that $x(y-1)+1$ is $(k,\lambda)$-admissible, Lemma \ref{ManyGroupGDD} or Lemma
\ref{ManyGroupGDDk=4} implies that there exists a $(k,\lambda)$-GDD $(V,\mathcal{G},\mathcal{A})$
of type $(y-1)^x$ which has a $2$-blocking system $\{S_1,S_2\}$ such that $|S_1 \cap G| = |S_2
\cap G| = \lfloor\frac{y-1}{2}\rfloor$ for each $G \in \mathcal{G}$ (note that if $k=4$ then $y$
is odd and so the fact that $y$ is $(4,\lambda)$-admissible implies $\lambda(x-1)(y-1) \equiv 0
\mod 12)$). Now let $\infty$ be a point not in $V$, let $G^* \in \mathcal{G}$ and for each $G \in
\mathcal{G}$ let $\mathcal{A}_G$ be a collection of blocks such that $(G \cup
\{\infty\},\mathcal{A}_G)$ is a $(y,k,\lambda)$-BIBD for which $\{S_1 \cap G,S_2 \cap G\}$ is a
blocking system. Let
$$\mathcal{B}=\mathcal{A} \cup \bigcup_{G \in \mathcal{G}} \mathcal{A}_G.$$
Then $(V \cup \{\infty\},\mathcal{B})$ is the required BIBD, $(V \cup \{\infty\},\{G^* \cup
\{\infty\}\} \cup \{\{z\}:z \in V \setminus G^*\},\mathcal{B}\setminus\mathcal{A}_{G^*})$ is the
required GDD, and in both cases $\{S_1,S_2\}$ is the required blocking system.
\end{proof}

\begin{lemma}\label{BasicConstructionsNoInfinity}
Let $y$, $k$ and $\lambda$ be positive integers such that $k \geq 4$, $y \geq 2k-2$ and if $k=4$
then $y$ is even. If there exists a $(y,k,\lambda)$-BIBD which has a
$(\lfloor\frac{y}{2}\rfloor,\lfloor\frac{y}{2}\rfloor)$-blocking system then, for each sufficiently
large integer $x$ such that $xy$ is $(k,\lambda)$-admissible,
\begin{itemize}
    \item[(a)]
there exists an $(xy,k,\lambda)$-BIBD which has an
$(x\lfloor\frac{y}{2}\rfloor,x\lfloor\frac{y}{2}\rfloor)$-blocking system; and
    \item[(b)]
there exists a $(k,\lambda)$-GDD of type $y^11^{(x-1)y}$ which has an
$(x\lfloor\frac{y}{2}\rfloor,x\lfloor\frac{y}{2}\rfloor)$-blocking system such that each set of
the blocking system intersects the group of size $y$ in exactly $\lfloor\frac{y}{2}\rfloor$
points.
\end{itemize}
\end{lemma}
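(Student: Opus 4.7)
The plan is to imitate the proof of Lemma \ref{BasicConstructionsInfinity}, replacing the GDD of type $(y-1)^x$ used there with a GDD of type $y^x$ and filling each group directly, with no infinite point needed since we are now targeting $xy$ points rather than $x(y-1)+1$.

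The first step is to invoke Lemma \ref{ManyGroupGDD} (when $k \geq 5$) or Lemma \ref{ManyGroupGDDk=4} (when $k=4$) with $g=y$ and $t=x$ to produce a $(k,\lambda)$-GDD $(V,\mathcal{G},\mathcal{A})$ of type $y^x$ carrying a $2$-blocking system $\{S_1,S_2\}$ with $|S_1 \cap G|=|S_2 \cap G|=\lfloor y/2 \rfloor$ for each $G \in \mathcal{G}$. For this I have to check that the divisibility hypotheses of the invoked lemma hold for all sufficiently large $x$ with $xy$ being $(k,\lambda)$-admissible. The two conditions $\lambda y(x-1) \equiv 0 \pmod{k-1}$ and $\lambda y^2x(x-1) \equiv 0 \pmod{k(k-1)}$ follow by straightforward subtraction from the $(k,\lambda)$-admissibility of $y$ (which is implied by the existence of the hypothesised BIBD) and of $xy$. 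When $k=4$ the additional condition $\lambda y(x-1) \equiv 0 \pmod{12}$ needs a small extra argument: the mod-$3$ part comes as before, while the mod-$4$ part uses that $y$ is even and $y-1$ is odd, so admissibility of $y$ forces $4 \mid \lambda y$ and hence $4 \mid \lambda y(x-1)$.

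With the GDD in hand, for each $G \in \mathcal{G}$ I let $(G,\mathcal{A}_G)$ be a copy of the hypothesised $(y,k,\lambda)$-BIBD, with the points of $G$ labelled so that its supplied $(\lfloor y/2 \rfloor,\lfloor y/2 \rfloor)$-blocking system becomes $\{S_1 \cap G, S_2 \cap G\}$. For part (a), I take $\mathcal{B} = \mathcal{A} \cup \bigcup_{G \in \mathcal{G}} \mathcal{A}_G$: every within-group pair is covered $\lambda$ times by the corresponding $\mathcal{A}_G$ and every between-group pair is covered $\lambda$ times by $\mathcal{A}$, so $(V,\mathcal{B})$ is an $(xy,k,\lambda)$-BIBD, and $\{S_1,S_2\}$ is the required $(x\lfloor y/2 \rfloor, x\lfloor y/2 \rfloor)$-blocking system because every block of $\mathcal{A}$ already meets both $S_1$ and $S_2$ and every block of $\mathcal{A}_G$ meets both $S_1 \cap G$ and $S_2 \cap G$. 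For part (b), I fix $G^* \in \mathcal{G}$ and set $\mathcal{B}' = \mathcal{A} \cup \bigcup_{G \neq G^*} \mathcal{A}_G$; then $\bigl(V,\{G^*\} \cup \{\{z\}:z \in V \setminus G^*\},\mathcal{B}'\bigr)$ is a $(k,\lambda)$-GDD of type $y^1 1^{(x-1)y}$, and $\{S_1,S_2\}$ is a blocking system intersecting the size-$y$ group $G^*$ in exactly $\lfloor y/2 \rfloor$ points on each side, as required.

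The only real obstacle is the divisibility bookkeeping at the first step, especially in the $k=4$ case where one must exploit the evenness of $y$ to push from admissibility of $y$ and $xy$ to the mod-$12$ condition demanded by Lemma \ref{ManyGroupGDDk=4}. The rest of the argument is a routine group-filling construction of exactly the same shape as in Lemma \ref{BasicConstructionsInfinity}.
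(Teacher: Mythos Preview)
Your proposal is correct and follows exactly the approach the paper takes: the paper's proof simply says to imitate Lemma \ref{BasicConstructionsInfinity} with a base GDD of type $y^x$ and no infinite point, which is precisely what you do. Your explicit verification of the divisibility hypotheses (especially the mod-$4$ part when $k=4$, using that $y$ even and $y-1$ odd force $4 \mid \lambda y$ from admissibility of $y$) is more detailed than the paper's ``it is easy to check'', but the content is the same.
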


\begin{proof}[{\bf Proof}]
This is proved very similarly to Lemma \ref{BasicConstructionsInfinity}, except that we take
a base GDD of type $y^x$ and we do not add the point $\infty$.
\end{proof}

\section{Examples of $2$-chromatic BIBDs}

In this section we will use Lemmas \ref{BasicConstructionsInfinity} and
\ref{BasicConstructionsNoInfinity} to find various examples of $2$-chromatic BIBDs. In Lemma
\ref{2Chromatic1ModThing} we establish, for all positive integers $k$ and $\lambda$ with $k \geq
5$, the asymptotic existence of $2$-chromatic BIBDs with block size $k$, index $\lambda$ and order
congruent to $1$ modulo $k-1$. We then construct, for all positive integers $k$ and $\lambda$ with
$k \geq 4$, $2$-chromatic BIBDs with block size $k$ and index $\lambda$ whose orders fall in each
admissible congruence class modulo $k(k-1)$. This is accomplished in Lemma
\ref{2ChromaticInEachCongClass} for $k \geq 5$ and in Lemma \ref{2ChromaticBlockSize4} for $k=4$.


\begin{lemma}\label{2Chromatic1ModThing}
Let $k$ and $\lambda$ be positive integers such that $k \geq 5$. For each sufficiently large
$(k,\lambda)$-admissible integer $v$ such that $v \equiv 1 \mod k-1)$, there exists a
$(v,k,\lambda)$-BIBD which has a
$(\lfloor\frac{v-1}{2}\rfloor,\lfloor\frac{v-1}{2}\rfloor)$-blocking system.
\end{lemma}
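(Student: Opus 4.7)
The plan is to apply Lemma \ref{BasicConstructionsInfinity}, which converts a single ``seed'' BIBD with the correct blocking system into an asymptotic family. Since $k\geq 5$, the choice $y=k$ is permitted by that lemma (the constraint ``if $k=4$ then $y$ is odd'' is vacuous), so it suffices to exhibit a $(k,k,\lambda)$-BIBD carrying a $(\lfloor\tfrac{k-1}{2}\rfloor,\lfloor\tfrac{k-1}{2}\rfloor)$-blocking system. This is essentially trivial: the only $(k,k,\lambda)$-BIBD is $(V,\mathcal{B})$ where $|V|=k$ and $\mathcal{B}$ consists of $\lambda$ copies of $V$ itself. Choose any two disjoint subsets $S_1,S_2\subseteq V$ of size $\lfloor\tfrac{k-1}{2}\rfloor$ (which fits since $2\lfloor\tfrac{k-1}{2}\rfloor\leq k-1<k$); because every block is all of $V$, both intersections $S_i\cap V$ are non-empty, so $\{S_1,S_2\}$ is the required blocking system.

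Now invoke Lemma \ref{BasicConstructionsInfinity}(a) with this seed. For every sufficiently large integer $x$ such that $x(k-1)+1$ is $(k,\lambda)$-admissible, we obtain an $(x(k-1)+1,k,\lambda)$-BIBD carrying an $(x\lfloor\tfrac{k-1}{2}\rfloor,x\lfloor\tfrac{k-1}{2}\rfloor)$-blocking system. Given any sufficiently large $(k,\lambda)$-admissible $v$ with $v\equiv 1\pmod{k-1}$, set $x=(v-1)/(k-1)$; then $x(k-1)+1=v$, producing the desired $(v,k,\lambda)$-BIBD with an $(x\lfloor\tfrac{k-1}{2}\rfloor,x\lfloor\tfrac{k-1}{2}\rfloor)$-blocking system.

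Finally, match the stated sizes: one checks easily that $x\lfloor\tfrac{k-1}{2}\rfloor\leq\lfloor\tfrac{x(k-1)}{2}\rfloor=\lfloor\tfrac{v-1}{2}\rfloor$, and $2\lfloor\tfrac{v-1}{2}\rfloor\leq v-1<v$, so by the tacit enlargement fact stated in Section~2 (enlarging the $S_i$ within the point set preserves the blocking property), the blocking system can be extended to an $(\lfloor\tfrac{v-1}{2}\rfloor,\lfloor\tfrac{v-1}{2}\rfloor)$-blocking system. This completes the argument.

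There is no real obstacle at this stage: all the heavy lifting was already done in Lemma \ref{ManyGroupGDD} and the group-filling construction of Lemma \ref{BasicConstructionsInfinity}. The only thing to verify carefully is that the trivial seed BIBD on $k$ points legitimately carries a blocking system of the required balanced form, and that the resulting blocking sizes from the lemma can be pushed up to $\lfloor\tfrac{v-1}{2}\rfloor$ without exceeding the total number of points.
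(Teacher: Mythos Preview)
Your proposal is correct and follows essentially the same approach as the paper's own proof: take the trivial $(k,k,\lambda)$-BIBD as the seed for Lemma~\ref{BasicConstructionsInfinity}(a), then observe $x\lfloor\tfrac{k-1}{2}\rfloor\le\lfloor\tfrac{x(k-1)}{2}\rfloor$ and enlarge. The only difference is that you spell out the enlargement step and the verification of the seed blocking system more explicitly than the paper does.
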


\begin{proof}[{\bf Proof}]
The trivial $(k,k,\lambda)$-BIBD obviously has a
$(\lfloor\frac{k-1}{2}\rfloor,\lfloor\frac{k-1}{2}\rfloor)$-blocking system. Thus, by Lemma
\ref{BasicConstructionsInfinity} (a), it can be seen that, for each sufficiently large integer $x$
such that $x(k-1)+1$ is $(k,\lambda)$-admissible, there exists an $(x(k-1)+1,k,\lambda)$-BIBD with
an $(x\lfloor\frac{k-1}{2}\rfloor,x\lfloor\frac{k-1}{2}\rfloor)$-blocking system. Since
$x\lfloor\frac{k-1}{2}\rfloor \leq \lfloor\frac{x(k-1)}{2}\rfloor$ for all positive integers $x$,
the proof is complete.
\end{proof}

Note that, for any integer $k \geq 5$, the above lemma implies that a $2$-chromatic $(v,k,1)$-BIBD
exists for each sufficiently large $(k,1)$-admissible integer $v$.

\begin{lemma}\label{2ChromaticInEachCongClass}
Let $k$ and $\lambda$ be positive integers such that $k \geq 5$. For each $(k,\lambda)$-admissible
integer $m \in \{0,1,\ldots, k(k-1)-1\}$, there is a positive integer $z$ such that $z \geq 2k-1$,
$z \equiv m \mod k(k-1))$, and there exists a $(z,k,\lambda)$-BIBD which has a
$(\lfloor\frac{z-1}{2}\rfloor,\lfloor\frac{z-1}{2}\rfloor)$-blocking system.
\end{lemma}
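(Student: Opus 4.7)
The plan is to obtain the desired $z$ by combining Lemma \ref{2Chromatic1ModThing}, which handles the case $v \equiv 1 \pmod{k-1}$, with the inflation construction of Lemma \ref{BasicConstructionsNoInfinity}. Given an admissible residue $m$ modulo $k(k-1)$, I will produce $z = xy$, where $y \equiv 1 \pmod{k(k-1)}$ plays the role of a ``seed'' BIBD and $x \equiv m \pmod{k(k-1)}$ is a multiplier that moves $z$ into the correct congruence class.

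I would first fix $y = 1 + sk(k-1)$ for a large positive integer $s$. Any such $y$ is odd (since $k(k-1)$ is even), satisfies $y \equiv 1 \pmod{k-1}$, and is $(k,\lambda)$-admissible because $y-1$ is already divisible by $k(k-1)$. For all sufficiently large $s$, Lemma \ref{2Chromatic1ModThing} furnishes a $(y,k,\lambda)$-BIBD with a $(\lfloor\tfrac{y-1}{2}\rfloor,\lfloor\tfrac{y-1}{2}\rfloor)$-blocking system. Since $y$ is odd, $\lfloor\tfrac{y-1}{2}\rfloor = \lfloor\tfrac{y}{2}\rfloor = \tfrac{y-1}{2}$, so this is precisely the shape of input required by Lemma \ref{BasicConstructionsNoInfinity}; moreover $y \geq k(k-1)+1 \geq 2k-1 > 2k-2$ for $k \geq 5$.

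Next I would choose $x$ to be any sufficiently large positive integer with $x \equiv m \pmod{k(k-1)}$. Because $y \equiv 1 \pmod{k(k-1)}$, this forces $xy \equiv m \pmod{k(k-1)}$, so $z := xy$ lies in the admissible class $m$ and is therefore itself $(k,\lambda)$-admissible. Taking $x$ large enough, Lemma \ref{BasicConstructionsNoInfinity}(a) produces an $(xy,k,\lambda)$-BIBD carrying an $(\tfrac{x(y-1)}{2},\tfrac{x(y-1)}{2})$-blocking system. A trivial parity check on $xy$ (with $y$ odd) shows that $\tfrac{x(y-1)}{2} \leq \lfloor\tfrac{z-1}{2}\rfloor$ for all $x \geq 1$, so by the tacit enlargement of blocking sets noted in Section 2 this extends to the required $(\lfloor\tfrac{z-1}{2}\rfloor,\lfloor\tfrac{z-1}{2}\rfloor)$-blocking system. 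The bound $z \geq 2k-1$ is immediate from $z \geq y$.

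There is no genuine obstacle here beyond arranging the arithmetic: $s$ and $x$ are chosen in independent residue classes, and each can be taken as large as needed to meet the ``sufficiently large'' thresholds demanded by Lemmas \ref{2Chromatic1ModThing} and \ref{BasicConstructionsNoInfinity}(a). The main conceptual point is the choice $y \equiv 1 \pmod{k(k-1)}$, which makes $y$ automatically admissible while simultaneously letting $x$ control the residue of $z$ modulo $k(k-1)$ freely across all admissible classes.
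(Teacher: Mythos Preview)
Your proposal is correct and follows essentially the same route as the paper: choose an odd $y\equiv 1\pmod{k(k-1)}$ via Lemma~\ref{2Chromatic1ModThing}, then apply Lemma~\ref{BasicConstructionsNoInfinity}(a) with a multiplier $x\equiv m\pmod{k(k-1)}$ and observe $\tfrac{x(y-1)}{2}\le\lfloor\tfrac{xy-1}{2}\rfloor$. The paper's proof is identical in structure, differing only in notation.
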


\begin{proof}[{\bf Proof}] Let $m$ be a $(k,\lambda)$-admissible element of $\{0,1,\ldots,k(k-1)-1\}$. By Lemma
\ref{2Chromatic1ModThing} there exists an integer $y' \equiv 1 \mod k(k-1))$ such that $y' \geq
2k-1$ and there is a $(y',k,\lambda)$-BIBD which has a $(\frac{y'-1}{2},\frac{y'-1}{2})$-blocking
system (note that $y'$ is odd since $k(k-1)$ is even). Thus, since any positive integer congruent
to $m$ modulo $k(k-1)$ is itself $(k,\lambda)$-admissible, by Lemma
\ref{BasicConstructionsNoInfinity} (a) it can be seen that there is a positive integer $x$ such
that $x \equiv m \mod k(k-1))$ and there exists an $(xy',k,\lambda)$-BIBD which has an
$(\frac{x(y'-1)}{2},\frac{x(y'-1)}{2})$-blocking system. Since $xy' \geq 2k-1$, $xy' \equiv m \mod
k(k-1))$ and $\frac{x(y'-1)}{2} \leq \lfloor\frac{xy'-1}{2}\rfloor$, the proof is complete.
\end{proof}

\begin{lemma}\label{2ChromaticBlockSize4}
Let $z$ and $\lambda$ be positive integers such that $z \in \{6,7,\ldots,17\}$ and $z$ is
$(4,\lambda)$-admissible. Then there exists a $(z,4,\lambda)$-BIBD with a
$(\lfloor\frac{z}{2}\rfloor,\lfloor\frac{z}{2}\rfloor)$-blocking system.
\end{lemma}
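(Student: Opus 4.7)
The plan is a finite case analysis over the twelve values of $z \in \{6, 7, \ldots, 17\}$. First I would reduce to the minimum admissible index: if $(V, \mathcal{B})$ is a $(z, 4, \lambda)$-BIBD with a $(\lfloor z/2 \rfloor, \lfloor z/2 \rfloor)$-blocking system, then for every positive integer $t$ the design obtained by taking each block of $\mathcal{B}$ with multiplicity $t$ is a $(z, 4, t\lambda)$-BIBD with the same blocking system. For each $z$ in range the set of $(4,\lambda)$-admissible indices forms an arithmetic progression $\{\lambda_0(z), 2\lambda_0(z), \ldots\}$, and a short computation using the conditions $\lambda(z-1) \equiv 0 \pmod{3}$ and $\lambda z(z-1) \equiv 0 \pmod{12}$ yields $\lambda_0(6) = 6$, $\lambda_0(7) = 2$, $\lambda_0(8) = \lambda_0(9) = 3$, $\lambda_0(13) = \lambda_0(16) = 1$, and similar minima for the other cases. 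It therefore suffices to construct one $(z, 4, \lambda_0(z))$-BIBD with the required blocking system in each of the twelve cases.

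For $z \in \{6, 7\}$ the construction is essentially automatic: since $\lfloor z/2 \rfloor \leq 3 < k = 4$, no block can be contained in $S_1$ or $S_2$, so for $z = 6$ any $(6, 4, 6)$-BIBD paired with any bipartition of the point set into two triples gives a $(3,3)$-blocking system, and for $z = 7$ one can take any $(7, 4, 2)$-BIBD and choose disjoint triples $S_1, S_2$ so that neither $V \setminus S_1$ nor $V \setminus S_2$ is a block (only seven $4$-subsets are forbidden, so this is easy). Existence of the underlying designs is classical.

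For $z \in \{8, 9, \ldots, 17\}$ one has $\lfloor z/2 \rfloor \geq k$, so a genuinely balanced $2$-colouring is required. I would construct each of these ten designs explicitly on $\{0, 1, \ldots, z-1\}$ together with an explicit pair of disjoint $\lfloor z/2 \rfloor$-sets $S_1, S_2$, and verify directly that every block meets both. Natural building blocks are the projective plane $PG(2,3)$ for $z = 13$, the affine plane $AG(2,4)$ for $z = 16$, and small difference-family or Steiner-system constructions for the remaining cases; in each instance one chooses $S_1$ and $S_2$ as complementary (or near-complementary) blocking sets of the underlying $2$-chromatic design.

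The main obstacle is this last step: the existence of a $2$-chromatic $(z, 4, \lambda_0(z))$-BIBD is already known from the literature cited in the introduction (\cite{HoLiPh1, HoLiPh2, RoCo, FrGrLiRo}), but we need a $2$-colouring that is as balanced as possible, and shifting points between colour classes is constrained by the monochromatic-block condition. Nevertheless, each of the ten non-trivial cases reduces to a finite verification on a small point set, so the cases can be dispatched by hand (or confirmed by a brief computer search), and combined with the reduction in the first paragraph this completes the proof.
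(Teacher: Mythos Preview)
Your proposal is correct and takes essentially the same approach as the paper: reduce to the minimal admissible index $\lambda_{\min}$ by taking repeated copies of blocks, then handle each $z\in\{6,\ldots,17\}$ by exhibiting a single $(z,4,\lambda_{\min})$-BIBD with the required balanced blocking system. The paper streamlines the finite case analysis by citing \cite{HoLiPh1,HoLiPh2} directly for explicit $(z,4,\lambda_{\min})$-BIBDs that already possess a $(\lfloor z/2\rfloor,\lceil z/2\rceil)$-blocking system, which settles the even $z$ immediately and leaves only a routine check for each odd $z\in\{7,9,11,13,15,17\}$ that the given design can be rebalanced to a $(\lfloor z/2\rfloor,\lfloor z/2\rfloor)$-blocking system; your separate treatment of $z\in\{6,7\}$ via the pigeonhole observation $\lfloor z/2\rfloor<4$ is a nice shortcut but not needed once those references are invoked.
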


\begin{proof}[{\bf Proof}]
Let $\lambda_{\min}$ be the smallest positive integer such that $z$ is
$(4,\lambda_{\min})$-admissible, and note that $\lambda \equiv 0 \mod \lambda_{\min})$. It suffices
to find a $(z,4,\lambda_{\min})$-BIBD with a
$(\lfloor\frac{z}{2}\rfloor,\lfloor\frac{z}{2}\rfloor)$-blocking system (since we can take
$\frac{\lambda}{\lambda_{\min}}$ copies of every block in this design).

For each $z \in \{6,7,\ldots,17\}$, a $(z,4,\lambda_{\min})$-BIBD with a
$(\lfloor\frac{z}{2}\rfloor,\lceil\frac{z}{2}\rceil)$-blocking system is given explicitly in
\cite{HoLiPh1} or \cite{HoLiPh2}. If $z$ is even this gives us the required result immediately, and
in each of the cases $z \in \{7,9,11,13,15,17\}$ it is routine to check that the given design in
fact admits a $(\lfloor\frac{z}{2}\rfloor,\lfloor\frac{z}{2}\rfloor)$-blocking system as required.
\end{proof}

\section{Examples of $c$-chromatic BIBDs}

In this section we will construct, for all positive integers $c$, $k$ and $\lambda$ with $c \geq
2$, $k \geq 4$ and $(c,k) \neq (2,4)$, $c$-chromatic BIBDs with block size $k$ and index $\lambda$
whose orders satisfy various congruence conditions (see Lemma \ref{CChromaticExample}). The reason
for these particular congruence conditions will become apparent when we employ these examples in
Section 5 to establish the asymptotic existence of $c$-chromatic BIBDs for each $c \geq 2$. Our
approach in this section is inspired by a technique used in \cite{DePhRo}, and also bears
similarities to methods used in \cite{HoPi}. Before proving Lemma \ref{CChromaticExample}, we
require three preliminary lemmas.

\begin{lemma}\label{CChromaticPartialBIBD}
Let $c$ and $k$ be integers such that $k \geq 3$ and $c \geq 2$. Then there exists a $c$-chromatic
partial BIBD with block size $k$ and index $1$.
\end{lemma}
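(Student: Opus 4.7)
The plan is a two-step reduction to the known existence of linear $k$-uniform hypergraphs of large chromatic number. Recall that a partial BIBD with block size $k$ and index $1$ is exactly a linear $k$-uniform hypergraph, so the weak chromatic number here is the ordinary hypergraph chromatic number.

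First, I would establish a monotonicity principle: deleting a single block from a partial BIBD decreases its (weak) chromatic number by at most $1$. Given a proper $s$-colouring $\phi$ of the post-deletion design $(V, \mathcal{B} \setminus \{B\})$, either the deleted block $B$ is already non-monochromatic under $\phi$, or we recolour one vertex $v \in B$ with a fresh colour $s+1$; the new colour is used exactly once, so no other block containing $v$ can become monochromatic, and $B$ itself is now non-monochromatic. Thus $\chi(V, \mathcal{B}) \leq \chi(V, \mathcal{B}\setminus\{B\}) + 1$, and trivially $\chi(V,\mathcal{B}\setminus\{B\}) \leq \chi(V, \mathcal{B})$.

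Second, I would produce a partial BIBD with block size $k$, index $1$, and chromatic number at least $c$. For $c = 2$ a single $k$-block suffices, since its chromatic number is exactly $2$. For $c \geq 3$ one may invoke a classical existence result for linear $k$-uniform hypergraphs of arbitrarily large chromatic number (for instance the Nesetril--Rodl construction, or the fact that finite projective planes of sufficiently large order form $(k,1)$-BIBDs of unbounded chromatic number), or alternatively give a short probabilistic argument: choose each $k$-subset of $[n]$ to be a block independently with probability $p = \Theta(n^{-(k-1)})$, so that the expected number of non-linear block pairs is $o(1)$ and a first-moment calculation shows that the chromatic number exceeds $c$ with positive probability; deleting the few offending pairs leaves a partial BIBD with the required properties.

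Armed with such a starting design, I would peel off its blocks one at a time. By the monotonicity principle the chromatic number drops by $0$ or $1$ at each removal, starting from a value at least $c$ and ending at $1$ (the chromatic number of the partial BIBD with no blocks). Consequently the sequence of chromatic numbers must hit $c$ exactly at some intermediate stage, and the corresponding partial BIBD is the one required. The main obstacle is producing the high-chromatic starting design; the peeling step itself is elementary.
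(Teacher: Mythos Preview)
Your proposal is correct and takes essentially the same approach as the paper: both start from the classical fact that linear $k$-uniform hypergraphs of arbitrarily large chromatic number exist (the paper cites Erd\H{o}s--Hajnal, with Lov\'asz for a constructive version), then use the observation that adding or removing a single block changes the chromatic number by at most one, and conclude by an intermediate-value sweep. The only cosmetic difference is that the paper builds up from one block while you peel down to none; note, incidentally, that your projective-plane remark is off (the block size of a plane of order $q$ is $q+1$, so for fixed $k$ you do not get a family of unbounded chromatic number that way), but this does not affect the argument since the existence fact is classical.
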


\begin{proof}[{\bf Proof}] It was shown in \cite{ErHa} (and later proved constructively in \cite{Lo}) that for any
integers $k' \geq 3$ and $c' \geq 1$ there is a partial BIBD with block size $k'$ and index $1$
which has chromatic number at least $c'$. Let $(U,\mathcal{A})$ be a partial BIBD with block size
$k$ and index $1$ which has chromatic number at least $c$.

Let $\mathcal{A} =\{A_1,A_2,\ldots,A_t\}$. We will show that there is an $s \in \{1,2,\ldots,t\}$
such that the partial BIBD $(U,\{A_1,A_2,\ldots,A_s\})$ is $c$-chromatic. We claim that, for each
$i \in \{1,2,\ldots,t-1\}$, if $(U,\{A_1,A_2,\ldots,A_i\})$ has chromatic number $c^{\dag}$ then
$(U,\{A_1,A_2,\ldots,A_{i+1}\})$ has chromatic number $c^{\dag}$ or $c^{\dag}+1$. To see this,
observe that we can obtain a $(c^{\dag}+1)$-colouring of $(U,\{A_1,A_2,\ldots,A_{i+1}\})$ by
taking a $(c^{\dag})$-colouring of $(U,\{A_1,A_2,\ldots,A_i\})$ and recolouring an arbitrary
vertex of $A_{i+1}$ with a colour which is not used in the original colouring. Thus, since
$(U,\{A_1\})$ has chromatic number $2$ and $(U,\{A_1,A_2,\ldots,A_t\})$ has chromatic number at
least $c$, it follows that there is indeed an $s \in \{1,2,\ldots,t\}$ such that
$(U,\{A_1,A_2,\ldots,A_s\})$ is $c$-chromatic. \end{proof}

\begin{lemma}\label{TwoTDs}
Let $k$ be an integer such that $k \geq 5$ and let $p$ be a prime such that $p \geq k$ and if $k=5$
then $p \equiv 1 \mod 4)$. Then there exists a transversal design with group size $p$ and block
size $k$, $(V,\mathcal{G},\mathcal{B})$, such that
\begin{itemize}
    \item
$(V,\mathcal{B})$ has a $2$-blocking system such that each set of the blocking system
intersects each group in $\mathcal{G}$ in exactly $\frac{p-1}{2}$ points; and
    \item
there is a block $B^* \in \mathcal{B}$ such that $(V,\mathcal{B}\setminus\{B^*\})$ has a
$2$-blocking system such that each set of the blocking system is disjoint from $B^*$ and
intersects each group in $\mathcal{G}$ in exactly $\frac{p-1}{2}$ points.
\end{itemize}
\end{lemma}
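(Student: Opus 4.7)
The plan is to construct the transversal design explicitly as the standard cyclic construction from $\Z_p$, and then to exhibit each required blocking system by choosing an ``excluded transversal'' $M$ (one point per group) and $2$-colouring $V \setminus M$ via the Legendre symbol $\chi$ of $\Z_p$. Let $V = \Z_p \times \{1, \ldots, k\}$, $G_i = \Z_p \times \{i\}$, and $B_{a,b} = \{(a+(i-1)b,\,i) : i=1,\ldots,k\}$ for $(a,b) \in \Z_p^2$; since $p \geq k$ the multipliers $0,1,\ldots,k-1$ are distinct modulo $p$, so this is a valid transversal design. The key observation is that under the correspondence $B_{a,b} \leftrightarrow $ line $y = a + bx$ in $\Z_p^2$ and $M \leftrightarrow $ lifted point set $\{(i-1, m_i) : i = 1, \ldots, k\}$, the intersection size $|B \cap M|$ equals the number of lifted points lying on the line corresponding to $B$.

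For the second bullet I would take $M = B^*$ to be any block of $\mathcal{B}$; since two distinct lines in $\Z_p^2$ meet in at most one point, $|B \cap B^*| \leq 1$ for every $B \neq B^*$, leaving at least $k-1 \geq 4$ ``free'' (non-$B^*$) points in each such block. For the first bullet I would take the parabola transversal $M = \{((i-1)^2, i) : i = 1, \ldots, k\}$: any line $y = a + bx$ meets $y = x^2$ in at most two points (the quadratic $x^2 - bx - a = 0$ has at most two roots in any field), so $|B \cap M| \leq 2$ for every block $B$, and a direct check shows $M$ itself is not a block when $p \geq 3$. Every block then has at least $k - 2 \geq 3$ free points. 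In either setup I would define
\begin{align*}
S_1 &= \{(x,i) \in V : x \neq m_i,\ \chi(x - m_i) = \epsilon_i\}, \\
S_2 &= \{(x,i) \in V : x \neq m_i,\ \chi(x - m_i) = -\epsilon_i\},
\end{align*}
where $(\epsilon_1, \ldots, \epsilon_k) \in \{+1,-1\}^k$ is a sign sequence containing both values. Then $|S_j \cap G_i| = \frac{p-1}{2}$ for each $i, j$, and $S_1, S_2$ are disjoint from one another and from $M$ by construction.

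The remaining step is to verify that every block $B \neq M$ meets both $S_1$ and $S_2$; equivalently, that the sequence $\bigl(\epsilon_i \, \chi((a + (i-1)b) - m_i)\bigr)_{i=1}^{k}$ contains both $+1$ and $-1$. Since the argument of $\chi$ is a polynomial in $i-1$ of degree at most $2$, Weil's character-sum bound gives
\[
\Bigl|\sum_{j \in \Z_p} \chi\bigl((a + jb) - m_{j+1}\bigr)\Bigr| \leq \sqrt{p},
\]
while the lower bound $\geq k - 2$ that would hold were $B$ monochromatic on its free points forces polychromaticity whenever $(k-2)^2 > p$. This disposes of the small-prime cases directly; for the remaining primes one tunes the signs $(\epsilon_i)$ (for instance by setting $\epsilon_i = \chi(\alpha_i)$ for suitably chosen $\alpha_i$) to exploit additional multiplicative structure, and in the borderline $k = 5$ case the hypothesis $p \equiv 1 \mod 4)$ is invoked to ensure $-1 \in Q$, providing the symmetry $\chi(-y) = \chi(y)$ needed to close the character-sum argument in the tightest regime. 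The main obstacle will be controlling these character sums uniformly for all primes $p \geq k$, and in particular for $k = 5$.
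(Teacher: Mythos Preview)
Your construction of the transversal design and your choice of the excluded transversal $M$ (a block for the second bullet, a parabola for the first) are fine and match the paper's setup in spirit. The gap is in the character-sum step, and it is not a matter of tuning: for large $p$ the Legendre-symbol colouring cannot be made to work with \emph{any} choice of signs $(\epsilon_1,\ldots,\epsilon_k)$.

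Concretely, take the second bullet with $B^*=B_{a^*,b^*}$, so $m_i=a^*+(i-1)b^*$. For a block $B_{a,b}$ with $b\neq b^*$ the argument of $\chi$ is $d\bigl((i-1)-j_0\bigr)$ for some $j_0\in\Z_p$, and the colour pattern on $B_{a,b}$ is $\bigl(\epsilon_i\,\chi(d)\,\chi((i-1)-j_0)\bigr)_{i=1}^k$. This pattern is monochromatic precisely when $\bigl(\chi(-j_0),\chi(1-j_0),\ldots,\chi((k-1)-j_0)\bigr)=\pm(\epsilon_1,\ldots,\epsilon_k)$. But a standard consequence of Weil's bound is that for $p$ large compared with $2^{2k}$, \emph{every} $\pm1$-pattern of length $k$ occurs among the consecutive Legendre-symbol windows $\bigl(\chi(n),\chi(n+1),\ldots,\chi(n+k-1)\bigr)$. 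Hence whatever $(\epsilon_i)$ you fix, some $j_0$ produces a monochromatic block, and the colouring fails. Your displayed Weil inequality is a bound on a sum over all of $\Z_p$, whereas monochromaticity of a block concerns only the $k$ terms with $j\in\{0,\ldots,k-1\}$; these two quantities are not comparable, so the implication ``$(k-2)^2>p\Rightarrow$ polychromatic'' does not follow, and for $p\geq(k-2)^2$ there is nothing to tune. The same obstruction applies to the parabola case, with a quadratic polynomial replacing the linear one.

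For comparison, the paper avoids characters entirely: it colours each group by an explicit interval $\{1,\ldots,\frac{p-1}{2}\}$ versus $\{\frac{p+1}{2},\ldots,p-1\}$, but shifts the interval in the last two or three groups (indices $k-3,k-2,k-1$) so that the arithmetic-progression structure of each block forces it to straddle the boundary. The verification is a short case analysis on the slope $a$ of the block, using only that $|a|\leq\lfloor\frac{p-1}{4}\rfloor$ once the first $k-2$ coordinates are constrained. This is elementary and works uniformly for all $p\geq k$, which is exactly where your approach breaks.
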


\begin{proof}[{\bf Proof}] Let $V = \Z_k \times \Z_p$ and let $\mathcal{G} = \{\{x\} \times \Z_p:x \in \Z_k\}$. Let $I=
\{-\frac{p-1}{2},-\frac{p-3}{2},\ldots,\frac{p-3}{2},\frac{p-1}{2}\}$. For all $i \in I$ and $j \in
\Z_p$ let
$$B_{i,j}=\{(x,ix+j):x \in \Z_k\}$$
where the second coordinates are considered modulo $p$ (here, we could equally say for all $i \in
\Z_p$, but it will help later to consider $i$ as an element of $I$). Let $\mathcal{B}=\{B_{i,j}:i
\in I \hbox{ and } j \in \Z_p\}$. We claim that $(V,\mathcal{G},\mathcal{B})$ is a transversal
design.

It is easy to see that $\mathcal{B}$ contains exactly $p^2$ blocks of size $k$. Also, if a pair of
points in different groups appears in the blocks $B_{i,j}$ and $B_{i',j'}$ for some $i,i' \in I$
and $j,j' \in \Z_p$ then it is clear that $i\ell=i'\ell \mod p)$ for some $\ell \in
\{1,2,\ldots,k-1\}$. So, since $k \leq p$, it follows that $i=i'$ and hence $j=j'$. Thus,
$(V,\mathcal{G},\mathcal{B})$ is indeed a transversal design. We will complete the proof by finding
a $2$-blocking system for $(V,\mathcal{B})$ such that each set of the blocking system intersects
each group in $\mathcal{G}$ in exactly $\frac{p-1}{2}$ points, and a $2$-blocking system for
$(V,\mathcal{B} \setminus \{B_{0,0}\})$ such that each set of the blocking system is disjoint from
$B_{0,0}$ and intersects each group in $\mathcal{G}$ in exactly $\frac{p-1}{2}$ points.

Let
\begin{align*}
  S_1 =& (\{0,1,\ldots,k-4\} \times \{1,2,\ldots,\tfrac{p-1}{2}\}) \cup (\{k-3,k-1\} \times \{0,1,\ldots,\tfrac{p-3}{2}\}) \cup \\
&(\{k-2\} \times \{\tfrac{p+1}{2},\tfrac{p+3}{2},\ldots,p-1\}) \hbox{ and} \\
  S_2 =& (\{0,1,\ldots,k-4\} \times \{\tfrac{p+1}{2},\tfrac{p+3}{2},\ldots,p-1\}) \cup (\{k-3,k-1\} \times \{\tfrac{p-1}{2},\tfrac{p+1}{2},\ldots,p-2\}) \cup \\
&(\{k-2\} \times \{0,1,\ldots,\tfrac{p-3}{2}\}).
\end{align*}
We claim that $\{S_1,S_2\}$ is a $2$-blocking system for $(V,\mathcal{B})$. Suppose for a
contradiction that there exist $a \in I$ and $b \in \Z_p$ such that $B_{a,b} \cap S_2 = \emptyset$.
Then
\begin{itemize}
    \item[(1)]
$\{ax+b: x \in \{0,1,\ldots,k-4\}\} \subseteq \{0,1,\ldots,\tfrac{p-1}{2}\}$;
    \item[(2)]
$a(k-3)+b \in \{0,1,\ldots,\tfrac{p-3}{2},p-1\}$;
    \item[(3)]
$a(k-2)+b \in \{\tfrac{p-1}{2},\tfrac{p+1}{2},\ldots,p-1\}$; and
    \item[(4)]
$a(k-1)+b \in \{0,1,\ldots,\tfrac{p-3}{2},p-1\}$.
\end{itemize}
From (1) and (2) it can be seen that $(k-3)|a| \leq \frac{p+1}{2}$, unless $a=\frac{p-1}{2}$, $b=0$
and $k=5$ in which case (3) is violated. Thus, if $k \geq 6$ then $|a| \leq \frac{p+1}{6} <
\frac{p-1}{4}$ since $p \geq k$, and, if $k=5$ then $|a| \leq \frac{p-1}{4}$ since $p \equiv 1 \mod
4)$ in this case. So in all cases $a \in \{-\lfloor\frac{p-1}{4}\rfloor,
-\lfloor\frac{p-5}{4}\rfloor,\ldots, \lfloor\frac{p-1}{4}\rfloor\}$ and it follows from (1), (2)
and (3) that $a(k-1)+b \in \{\tfrac{p-1}{2},\tfrac{p+1}{2},\ldots,p-2\}$, a contradiction to (4).
It can be similarly shown that no block in $\mathcal{B}$ is disjoint from $S_1$.

Let
\begin{align*}
  T_1 &= (\{0,1,\ldots,k-3,k-1\} \times \{1,2,\ldots,\tfrac{p-1}{2}\}) \cup (\{k-2\} \times \{\tfrac{p+1}{2},\tfrac{p+3}{2},\ldots,p-1\}) \hbox{ and} \\
  T_2 &= (\{0,1,\ldots,k-3,k-1\} \times \{\tfrac{p+1}{2},\tfrac{p+3}{2},\ldots,p-1\}) \cup (\{k-2\} \times \{1,2,\ldots,\tfrac{p-1}{2}\}).
\end{align*}
We claim that $\{T_1,T_2\}$ is a $2$-blocking system for $(V,\mathcal{B} \setminus \{B_{0,0}\})$.
Suppose for a contradiction that there exist $a \in I$ and $b \in \Z_p$ such that $(a,b) \neq
(0,0)$ and $B_{a,b} \cap T_2 = \emptyset$. Then
\begin{itemize}
    \item[(1)]
$\{ax+b: x \in \{0,1,\ldots,k-3\}\} \subseteq \{0,1,\ldots,\tfrac{p-1}{2}\}$;
    \item[(2)]
$a(k-2)+b \in \{0,\tfrac{p+1}{2},\tfrac{p+3}{2},\ldots,p-1\}$; and
    \item[(3)]
$a(k-1)+b \in \{0,1,\ldots,\tfrac{p-1}{2}\}$.
\end{itemize}
From (1) it can be seen that $(k-3)|a| \leq \frac{p-1}{2}$. Thus, since $k \geq 5$, $|a| \leq
\frac{p-1}{4}$. So $a \in \{-\lfloor\frac{p-1}{4}\rfloor, -\lfloor\frac{p-5}{4}\rfloor,\ldots,
\lfloor\frac{p-1}{4}\rfloor\}$ and it follows from (1) and (2) that $a(k-1)+b \in
\{\tfrac{p+1}{2},\tfrac{p+3}{2},\ldots,p-1\}$, a contradiction to (3). It can be similarly shown
that no block in $\mathcal{B} \setminus \{B_{0,0}\}$ is disjoint from $T_1$. \end{proof}

\begin{lemma}\label{TwoTDsBlockSize4}
There exists a transversal design with group size $13$ and block size $4$,
$(V,\mathcal{G},\mathcal{B})$, such that
\begin{itemize}
    \item
$(V,\mathcal{B})$ has a $3$-blocking system, each set of which intersects each group in
$\mathcal{G}$ in exactly $4$ points; and
    \item
there exists a block $B^* \in \mathcal{B}$ such that $(V,\mathcal{B}\setminus\{B^*\})$ has a
$3$-blocking system each set of which is disjoint from $B^*$ and intersects each group in
$\mathcal{G}$ in exactly $4$ points.
\end{itemize}
Also, for each positive integer $\lambda$, there exists a $(13,4,\lambda)$-BIBD with a
$(4,4,4)$-blocking system.
\end{lemma}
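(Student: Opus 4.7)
The plan is to parallel the construction in the proof of Lemma \ref{TwoTDs} but to build a $3$-blocking system instead of a $2$-blocking system, exploiting the fact that $13-1 = 12$ is divisible by $3 \cdot 4$. Take $V = \Z_4 \times \Z_{13}$, let $\mathcal{G} = \{\{x\} \times \Z_{13} : x \in \Z_4\}$, and for each $i,j \in \Z_{13}$ set $B_{i,j} = \{(x,ix+j) : x \in \Z_4\}$, giving $\mathcal{B} = \{B_{i,j} : i,j \in \Z_{13}\}$. Since $4 \leq 13$, any pair of points common to $B_{i,j}$ and $B_{i',j'}$ forces $i\ell \equiv i'\ell \pmod{13}$ for some $\ell \in \{1,2,3\}$ and hence $i = i'$ and $j = j'$, so $(V,\mathcal{G},\mathcal{B})$ is a transversal design with group size $13$ and block size $4$.

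For the first required blocking system, I would take $S_l = \bigcup_{x \in \Z_4} \{x\} \times S_l^{(x)}$ where $\{S_1^{(x)}, S_2^{(x)}, S_3^{(x)}\}$ is a partition of $\Z_{13} \setminus \{p_x\}$ into $4$-sets for some uncoloured point $p_x$. The demand that no $B_{i,j}$ be contained in a single $S_l$ becomes the requirement that, for every $(i,j)$, the colours of $j, i+j, 2i+j, 3i+j$ under the four colourings $\pi_0, \pi_1, \pi_2, \pi_3$ induced by the partitions are not all equal (the $p_x$ being regarded as uncoloured). Using the same partition in every group fails on blocks with $i=0$, so the four partitions must differ; my plan is to take cyclic shifts of a single base partition of $\Z_{13} \setminus \{0\}$ (for instance, shifts of the interval partition $\{1,2,3,4\}, \{5,6,7,8\}, \{9,10,11,12\}$ by offsets $0, a, 2a, 3a$ for a suitable $a$) and to verify the $169$ non-monochromatic conditions by case analysis on the slope $i$. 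The principal obstacle is choosing offsets for which \emph{every} slope $i$ yields no monochromatic block; if no clean algebraic description presents itself, the parameter space is small enough that a short computer search certainly produces a valid set of partitions. For the second blocking system, take $B^* = B_{0,0}$ and force $p_x = 0$ in every group, so that the resulting sets $T_1, T_2, T_3$ are automatically disjoint from $B^*$; the same verification applies.

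For the final assertion, it suffices to handle $\lambda = 1$, since $13$ is $(4,\lambda)$-admissible for every $\lambda \geq 1$ and we may then take $\lambda$ copies of every block. Realise the $(13,4,1)$-BIBD as the cyclic projective plane $\mathrm{PG}(2,3)$ over $\Z_{13}$ with blocks $\{i, i+1, i+3, i+9\}$ for $i \in \Z_{13}$ (whose defining set is the well-known Singer difference set). Then the partition $S_1 = \{1,2,3,4\}$, $S_2 = \{5,6,7,8\}$, $S_3 = \{9,10,11,12\}$ of $\Z_{13} \setminus \{0\}$ provides a $(4,4,4)$-blocking system; this is confirmed by directly checking the $13$ translates, each of which is seen to meet at least two of the $S_l$.
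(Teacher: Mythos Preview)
Your overall shape matches the paper's: build the same linear transversal design on $\Z_4\times\Z_{13}$, exhibit explicit $3$-blocking systems (ultimately found by computer), and handle the $(13,4,\lambda)$-BIBD separately. The paper simply lists the two blocking systems it found by search and does not attempt a structured description. Your treatment of the $(13,4,1)$-BIBD via the Singer difference set $\{0,1,3,9\}$ with the interval partition $\{1,\dots,4\},\{5,\dots,8\},\{9,\dots,12\}$ is in fact more concrete than the paper's (which merely asserts that any $(13,4,1)$-BIBD has such a system), and a direct check of the thirteen translates confirms it.

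However, your proposed structured construction for the transversal-design blocking systems has a built-in obstruction you did not notice. If you shift a base partition of $\Z_{13}\setminus\{0\}$ by offsets $0,a,2a,3a$ in the four groups, then the uncoloured point in group $x$ is $xa$, so the uncoloured points are exactly $\{(x,xa):x\in\Z_4\}=B_{a,0}$. That block meets none of $S_1,S_2,S_3$, so the first blocking system fails regardless of the choice of $a$. For the second system you force $p_x=0$ in every group; but a shift of the base partition by $c_x$ places the uncoloured point at $c_x$, so this forces all offsets to be $0$, and you already observed that identical partitions fail on the $i=0$ blocks. Thus the cyclic-shift ansatz cannot succeed for either system as stated; you would need to break the arithmetic-progression pattern among the uncoloured points (for the first system) and to use genuinely different, non-shift-related partitions of $\Z_{13}\setminus\{0\}$ (for the second). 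Your fallback to an unstructured search over partitions is what the paper actually does, but ``a short computer search certainly produces a valid set'' is an assertion, not a proof; to complete the argument you must exhibit the systems, as the paper does.
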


\begin{proof}[{\bf Proof}] Let $V = \Z_4 \times \Z_{13}$ and $\mathcal{G} = \{\{x\} \times \Z_{13}: x \in \Z_4\}$. For
all $i,j \in \Z_{13}$ let
$$B_{i,j}=\{(0,i),(1,j),(2,i+j),(3,i+2j)\},$$
and let $\mathcal{B} = \{B_{i,j}:i,j \in \Z_{13}\}$. Then $(V,\mathcal{G},\mathcal{B})$ is a
transversal design with group size $13$ and block size $4$. Let {\small
\begin{align*}
  S_1 &= \{(0,1),(0,2),(0,3),(0,4),(1,1),(1,2),(1,3),(1,4),(2,1),(2,2),(2,3),(2,4),(3,1),(3,2),(3,8),(3,9)\}; \\
  S_2 &= \{(0,5),(0,6),(0,7),(0,8),(1,5),(1,6),(1,7),(1,10),(2,7),(2,8),(2,9),(2,12),(3,0),(3,3),(3,10),(3,11)\}; \\
  S_3 &= \{(0,0),(0,9),(0,10),(0,11),(1,0),(1,8),(1,11),(1,12),(2,0),(2,5),(2,6),(2,10),(3,4),(3,5),(3,6),(3,7)\}.
\end{align*}}
Then $\{S_1,S_2,S_3\}$ is a blocking system for $(V,\mathcal{B})$, each set of which intersects
each group in $\mathcal{G}$ in exactly $4$ points. Let {\small
\begin{align*}
  T_1 &= \{(0,2),(0,3),(0,4),(0,5),(1,2),(1,3),(1,4),(1,5),(2,1),(2,3),(2,4),(2,5),(3,1),(3,2),(3,10),(3,11)\}; \\
  T_2 &= \{(0,6),(0,7),(0,8),(0,9),(1,6),(1,7),(1,9),(1,10),(2,8),(2,9),(2,10),(2,11),(3,0),(3,4),(3,5),(3,12)\}; \\
  T_3 &= \{(0,0),(0,10),(0,11),(0,12),(1,0),(1,8),(1,11),(1,12),(2,0),(2,6),(2,7),(2,12),(3,6),(3,7),(3,8),(3,9)\}.
\end{align*}}
Then $\{T_1,T_2,T_3\}$ is a blocking system for $(V,\mathcal{B}\setminus\{B_{1,1}\})$ each set of
which is disjoint from $B_{1,1}$ and intersects each group in $\mathcal{G}$ in exactly $4$ points.

We saw in the proof of Lemma \ref{2ChromaticBlockSize4} that there exists a $(13,4,1)$-BIBD, and
it is easy to show that any such design must have a $(4,4,4)$-blocking system. By taking $\lambda$
copies of each block in this design, we can obtain the required $(13,4,\lambda)$-BIBD.
\end{proof}

The blocking systems $\{S_1,S_2,S_3\}$ and $\{T_1,T_2,T_3\}$ in the above proof were found by
computer search.

\begin{lemma}\label{CChromaticExample}
Let $c$, $k$, $\lambda$ and $m$ be positive integers such that $c \geq 2$, $k \geq 4$ and $m \equiv
0 \mod k(k-1))$. Further suppose that if $k=4$, then $c \geq 3$ and $m \not\equiv 0 \mod 13)$. For
each $(k,\lambda)$-admissible integer $\ell \in \{0,1,\ldots,m-1\}$ there is an integer $w$ such
that $w > m$, $w \equiv \ell \mod m)$ and there exists a $c$-chromatic $(w,k,\lambda)$-BIBD with an
$(s_1,s_2,\ldots,s_c)$-blocking system for some integers $s_1,s_2,\ldots,s_c$ satisfying $s_i \leq
\lfloor\frac{w-1}{2}\rfloor$ for each $i \in \{1,2,\ldots,c\}$.
\end{lemma}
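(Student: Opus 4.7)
The plan is to combine a $c$-chromatic partial BIBD, which will supply a chromatic lower bound of $c$, with the transversal-design machinery of Lemmas~\ref{TwoTDs} and~\ref{TwoTDsBlockSize4}, which will supply a matching $c$-blocking system with small parts. The resulting configuration is then completed to a full $(w,k,\lambda)$-BIBD whose order can be tuned to the prescribed residue $\ell$ modulo $m$.

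First I invoke Lemma~\ref{CChromaticPartialBIBD} to obtain a $c$-chromatic partial BIBD $P=(U,\mathcal{A})$ of block size $k$ and index~$1$, and I fix a $c$-colouring $\phi\colon U\to\{1,\ldots,c\}$ of $P$. Any proper colouring of a design whose block set contains $\mathcal{A}$ restricts to a proper colouring of $P$ and hence uses at least $c$ colours; so if I arrange the final BIBD so that its block set includes $\mathcal{A}$, its chromatic number is automatically $\geq c$. The remainder of the proof is devoted to producing such an embedding while exhibiting an explicit $c$-blocking system with small balanced parts.

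Next I inflate: each point $u\in U$ is replaced by a group $G_u$ of $p$ points, where $p$ is a large prime chosen according to the hypotheses of Lemma~\ref{TwoTDs} when $k\geq 5$ and $p=13$ when $k=4$. For each block $A\in\mathcal{A}$ I place on the groups $\{G_u:u\in A\}$ a transversal design produced by Lemma~\ref{TwoTDs} or~\ref{TwoTDsBlockSize4}, using the variant whose blocking system avoids a designated block, and then identifying that designated block with $A$ itself (viewed as a transversal of the $G_u$). In this way the blocks of $\mathcal{A}$ appear literally among the blocks of the construction, so $P$ embeds. The $2$- or $3$-blocking systems of the various TDs, refined so that inside each $G_u$ one part of the system carries the colour $\phi(u)$ and the remaining points are distributed among auxiliary colours, assemble into a partial $c$-blocking system of the inflated configuration. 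Pairs that still need to be covered---those inside a single $G_u$, and those between $G_u$ and $G_v$ with $\{u,v\}$ contained in no block of $\mathcal{A}$---are handled by the group-filling constructions of Lemmas~\ref{BasicConstructionsInfinity} and~\ref{BasicConstructionsNoInfinity}, using $2$-chromatic ingredients from Lemmas~\ref{2ChromaticInEachCongClass} and~\ref{2ChromaticBlockSize4} and aligning the $2$-blocking system of each filling with the global $c$-blocking system. The resulting $(w,k,\lambda)$-BIBD has $w$ determined by $|U|$, $p$, and the number of filling copies; these parameters can be chosen so that $w\equiv\ell\pmod{m}$ and $w>m$. The excluded case $m\equiv 0\pmod{13}$ in the $k=4$ situation is precisely the obstruction created by the fixed choice $p=13$ in Lemma~\ref{TwoTDsBlockSize4}.

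The main obstacle I anticipate is two-fold. First, I must check that the assembled $c$-blocking system has every part of size at most $\lfloor(w-1)/2\rfloor$, which amounts to controlling how rapidly the auxiliary colours used inside each $G_u$ accumulate across the inflation: the balance in Lemmas~\ref{TwoTDs} and~\ref{TwoTDsBlockSize4}, together with the balance in the filling ingredients, is exactly what is needed here, but the verification must be done carefully. Second, the congruence bookkeeping is delicate: the orders at which the filling BIBDs are applied must simultaneously exist (requiring their own divisibility conditions), fit inside the inflated structure, and combine to give the total order $w\equiv\ell\pmod{m}$ for every $(k,\lambda)$-admissible $\ell$---this is where the hypothesis $m\equiv 0\pmod{k(k-1)}$, and the $(c,k)\neq(2,4)$ exception, become essential.
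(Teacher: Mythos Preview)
Your outline captures the right intuition---inflate by a prime $p$, place transversal designs on the blocks of a $c$-chromatic partial BIBD so as to embed it, and assemble a $c$-blocking system from the TD blocking systems---but it omits a step that the paper's proof cannot do without, and the substitute you propose does not work.

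After you inflate $(U,\mathcal{A})$ and place a transversal design on each block $A\in\mathcal{A}$, the pairs that remain uncovered are: all pairs inside a group $G_u$; all pairs between $G_u$ and $G_v$ when $\{u,v\}$ lies in no block of $\mathcal{A}$; and, when $\lambda>1$, all pairs between $G_u$ and $G_v$ with $\{u,v\}$ in a block of $\mathcal{A}$, each still needing $\lambda-1$ further occurrences. This is an irregular ``leave'' whose structure depends on the edge set of the block-graph of $(U,\mathcal{A})$. Lemmas~\ref{BasicConstructionsInfinity} and~\ref{BasicConstructionsNoInfinity} produce $(k,\lambda)$-GDDs of type $y^11^n$; they fill a single large hole against a field of singletons, and they do not supply blocks that cover only the uncovered cross-pairs while avoiding the already-covered ones. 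There is no way to patch this leave with those ingredients, and you give no other completion mechanism.

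The paper avoids this difficulty by first embedding $(U,\mathcal{A}_1)$ in a \emph{full} $(y,k,\lambda)$-BIBD $(Y,\mathcal{A}_1\cup\mathcal{A}_2)$, using Theorem~\ref{LamWilTheorem} applied to the block-graph $G$ of $(U,\mathcal{A}_1)$. (This is also why the proof arranges that some point of $U$ lies in exactly one block---so that $\alpha(\{H\})=k-1$---and that $\gcd(|\mathcal{A}_1|,m)=1$---so that the divisibility condition $\lambda y(y-1)\equiv 0 \pmod{bk(k-1)}$ is compatible with the target residue $\ell$ modulo $m$.) Only then is each point of $Y$ inflated to a group of size $p$: now every pair $\{x_1,x_2\}\subseteq Y$ lies in exactly $\lambda$ blocks of $\mathcal{A}_1\cup\mathcal{A}_2$, so placing an index-$1$ TD on each block covers every cross-pair exactly $\lambda$ times, and the within-group pairs are handled by a $(p,k,\lambda)$-BIBD on each $\{x\}\times Z$. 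The two TD variants of Lemma~\ref{TwoTDs} are used selectively: the ``avoided block'' variant on blocks of $\mathcal{A}_1$ (to realise the embedded copy of $(U,\mathcal{A}_1)$ on $U\times\{z^*\}$), and the ordinary variant on blocks of $\mathcal{A}_2$. The blocking system is then $S_1=(Y\times Z_1)\cup(R_1\times\{z^*\})$, $S_2=(Y\times Z_2)\cup(R_2\times\{z^*\})$, $S_i=R_i\times\{z^*\}$ for $i\geq 3$, and the size bound $s_i\leq\lfloor(w-1)/2\rfloor$ follows because $y\geq 2u$ and $|Z_1|=|Z_2|=(p-1)/2$. The congruence $w=py\equiv\ell\pmod m$ is arranged via the Chinese Remainder Theorem using $\gcd(p,m)=\gcd(b,m)=1$.
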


\begin{proof}[{\bf Proof}] Let $\ell$ be a $(k,\lambda)$-admissible element of $\{0,1,\ldots,m-1\}$. We will first deal
with the case $k \geq 5$. The special case $k=4$ will be dealt with later.

By Dirichlet's Theorem there are infinitely many primes congruent to $1$ modulo $k(k-1)$. Thus, by
Lemma \ref{2Chromatic1ModThing}, we can choose $p$ to be an odd prime such that $p \equiv 1 \mod
k(k-1))$, $\gcd(p,m)=1$ and there exists a $(p,k,\lambda)$-BIBD with a
$(\frac{p-1}{2},\frac{p-1}{2})$-blocking system (note that any integer congruent to $1$ modulo
$k(k-1)$ is $(k,\lambda)$-admissible).

By Lemma \ref{CChromaticPartialBIBD} there is a $c$-chromatic partial BIBD with block size $k$ and
index $1$. Clearly, by adding points and blocks to this design in such a way that each new block
is disjoint from each other block of the design, we can produce, for some positive integer $u$, a
$c$-chromatic partial $(u,k,1)$-BIBD $(U,\mathcal{A}_1)$ such that $\gcd(|\mathcal{A}_1|,m)=1$ and
there is a point in $U$ which is in exactly one block in $\mathcal{A}_1$. Let $b=|\mathcal{A}_1|$
and let $\{R_1,R_2,\ldots,R_c\}$ be a blocking system for $(U,\mathcal{A}_1)$.

Let $G$ be the graph on vertex set $U$ in which two vertices are adjacent if and only if the
corresponding pair of points is contained in a block in $\mathcal{A}_1$. We claim that Theorem
\ref{LamWilTheorem} implies there is a decomposition of the $\lambda$-fold complete graph of order
$v$ into copies of $G$ for all sufficiently large integers $v$ such that $\lambda(v-1) \equiv 0
\mod k-1)$ and $\lambda v(v-1) \equiv 0 \mod bk(k-1))$. To see this we take $C$ in Theorem
\ref{LamWilTheorem} as a set containing a single colour, let $H$ be $G$ considered as a symmetric
digraph of this colour, and observe that we have $\mu(H)=2|E(G)|=bk(k-1)$ and, for each $x \in U$,
$\tau(H,x)=(\deg_G(x),\deg_G(x))$. So $\beta(\{H\})=bk(k-1)$ and $\alpha(\{H\}) =
\gcd(\{\deg_G(x):x \in V(G)\})=k-1$ (note that, for each $x \in U$, $\deg_G(x)=r_x(k-1)$ where
$r_x$ is the number of blocks in $\mathcal{A}_1$ which contain $x$, and that we have seen that
$r_y=1$ for some $y \in U$). Thus our claim does indeed follow from Theorem \ref{LamWilTheorem}.

Since $\gcd(p,m)=1$ and $\gcd(b,m)=1$, by the Chinese Remainder Theorem there are infinitely many
positive integers which are congruent to $1$ modulo $b$ and whose product with $p$ is congruent to
$\ell$ modulo $m$. Thus, there is an integer $y$ such that $y \geq 2u$, $py > m$, $py \equiv \ell
\mod m)$, $y \equiv 1 \mod b)$ and there is a decomposition of the $\lambda$-fold complete graph
of order $y$ into copies of $G$ (note that since $\ell$ is $(k,\lambda)$-admissible, since $p
\equiv 1 \mod k(k-1))$, since $m \equiv 0 \mod k(k-1))$, and since $\gcd(b,m)=1$, the congruences
imply that $\lambda (y-1) \equiv 0 \mod k-1)$ and $\lambda y(y-1) \equiv 0 \mod bk(k-1))$).
Clearly then, there is an embedding of $(U,\mathcal{A}_1)$ in a $(y,k,\lambda)$-BIBD
$(Y,\mathcal{A}_1 \cup \mathcal{A}_2)$.

Let $Z$ be a set such that $|Z|=p$. Let $z^* \in Z$ and let $\{Z_1,Z_2\}$ be a partition of $Z
\setminus \{z^*\}$ such that $|Z_1|=|Z_2|=\frac{p-1}{2}$. Let $V=Y \times Z$ be a point set. Let
$S_1=(Y \times Z_1) \cup (R_1 \times \{z^*\})$, $S_2=(Y \times Z_2) \cup (R_2 \times \{z^*\})$, and
$S_i=R_i \times \{z^*\}$ for each $i \in \{3,4,\ldots,c\}$. Note that $S_1,S_2,\ldots,S_c$ are
pairwise disjoint and that, since $y \geq 2u$, $|S_i| \leq \lfloor\frac{py-1}{2}\rfloor$ for each
$i \in \{1,2,\ldots,c\}$. We will construct a collection of blocks $\mathcal{C}$ such that
$(V,\mathcal{C})$ is a $(py,k,\lambda)$-BIBD for which $\{S_1,S_2,\ldots,S_c\}$ is a blocking
system, and such that $\mathcal{C}$ contains an isomorphic copy of $\mathcal{A}_1$. This will
complete the proof since $py >m$, since $py \equiv \ell \mod m)$ and since the fact that
$\mathcal{C}$ contains an isomorphic copy of $\mathcal{A}_1$ implies that $(V,\mathcal{C})$ has
chromatic number at least $c$.

For each $A \in \mathcal{A}_1$, let $\mathcal{B}_A$ be a collection of blocks such that $(A\times
Z,\{\{x\} \times Z:x \in A\},\mathcal{B}_A)$ is a transversal design with group size $p$ and block
size $k$ such that $A \times \{z^*\} \in \mathcal{B}_A$ and $\{A \times Z_1,A \times Z_2\}$ is a
blocking system for $(A\times Z,\mathcal{B}_A\setminus \{A \times \{z^*\}\})$ (such a collection
exists by Lemma \ref{TwoTDs}, noting that if $k=5$ then $p \equiv 1 \mod 4)$ since $p \equiv 1 \mod
k(k-1))$). For each $A \in \mathcal{A}_2$, let $\mathcal{B}^{\dag}_A$ be a collection of blocks
such that $(A\times Z,\{\{x\} \times Z:x \in A\},\mathcal{B}^{\dag}_A)$ is a transversal design
with group size $p$ and block size $k$ for which $\{A \times Z_1,A \times Z_2\}$ is a blocking
system (such a collection exists by Lemma \ref{TwoTDs}, noting that if $k=5$ then $p \equiv 1 \mod
4)$ since $p \equiv 1 \mod k(k-1))$). For each $x \in Y$, let $\mathcal{B}_x$ be a collection of
blocks such that $(\{x\}\times Z,\mathcal{B}_x)$ is a $(p,k,\lambda)$-BIBD for which $\{\{x\}
\times Z_1,\{x\} \times Z_2\}$ is a blocking system (such a collection exists by the definition of
$p$).

Let
$$\mathcal{C} = \left(\bigcup_{A \in \mathcal{A}_1} \mathcal{B}_A \right) \cup \left(\bigcup_{A \in \mathcal{A}_2} \mathcal{B}^{\dag}_A \right) \cup \left(\bigcup_{x \in Y} \mathcal{B}_x \right).$$
We claim that $(V,\mathcal{C})$ is a $(py,k,\lambda)$-BIBD for which $\{S_1,S_2,\ldots,S_c\}$ is a
blocking system and that $\mathcal{C}$ contains an isomorphic copy of $(U,\mathcal{A}_1)$ (on the
point set $U \times \{z^*\}$), which will suffice to complete the proof in the case $k=5$. Here,
we will verify this claim in some detail, but later in the paper we will leave similar
verifications to the reader.

Routine case analysis shows that each pair of points in $V$ is in exactly $\lambda$ blocks in
$\mathcal{C}$ and hence that $(V,\mathcal{C})$ is a $(py,k,\lambda)$-BIBD. For each $A \in
\mathcal{A}_1$, we have that $A \times \{z^*\} \in \mathcal{B}_A$, and it follows that
$\mathcal{C}$ contains an isomorphic copy of $(U,\mathcal{A}_1)$ on the point set $U \times
\{z^*\}$. Furthermore, because $\{R_1,R_2,\ldots,R_c\}$ is a blocking system for
$(U,\mathcal{A}_1)$, each block in this copy of $(U,\mathcal{A}_1)$ intersects at least two sets
in $\{R_i \times \{z^*\}:i\in\{1,2,\ldots,c\}\}$ and hence at least two sets in
$\{S_1,S_2,\ldots,S_c\}$ (note $R_i \times \{z^*\} \subseteq S_i$ for each
$i\in\{1,2,\ldots,c\}$). Finally, because of the blocking systems possessed by the designs in
$\{(A\times Z,\mathcal{B}_A\setminus \{A \times \{z^*\}\}):A \in \mathcal{A}_1\}$, $\{(A\times
Z,\{\{x\} \times Z:x \in A\},\mathcal{B}^{\dag}_A):A \in \mathcal{A}_2\}$, and $\{(\{x\}\times
Z,\mathcal{B}_x):x \in Y\}$, every block in $\mathcal{C}$ which is not in the copy of
$(U,\mathcal{A}_1)$ intersects both $Y \times Z_1$ and $Y \times Z_2$ and hence both $S_1$ and
$S_2$ (note that $Y \times Z_1 \subseteq S_1$ and  $Y \times Z_2 \subseteq S_2$). So
$(V,\mathcal{C})$ is indeed a $(py,k,\lambda)$-BIBD for which $\{S_1,S_2,\ldots,S_c\}$ is a
blocking system and $\mathcal{C}$ does contain an isomorphic copy of $(U,\mathcal{A}_1)$, as
required.

In the case $k=4$ note that $c \geq 3$ and choose $p=13$. Note that $p \equiv 1 \mod k(k-1))$, that
$\gcd(p,m)=1$ since $m \not\equiv 0 \mod 13)$, and that there is a $(p,k,\lambda)$-BIBD with a
$(4,4,4)$-blocking system by Lemma \ref{TwoTDsBlockSize4}. Also by Lemma \ref{TwoTDsBlockSize4},
there exists a transversal design with group size $13$ and block size $4$,
$(V,\mathcal{G},\mathcal{B})$, such that
\begin{itemize}
    \item
$(V,\mathcal{B})$ has a $3$-blocking system, each set of which intersects each group in
$\mathcal{G}$ in exactly $4$ points; and
    \item
there exists a block $B^* \in \mathcal{B}$ such that $(V,\mathcal{B}\setminus\{B^*\})$ has a
$3$-blocking system each set of which is disjoint from $B^*$ and intersects each group in
$\mathcal{G}$ in exactly $4$ points.
\end{itemize}
By using a similar argument to that used in the case $k \geq 5$ we can obtain the required block
design.
\end{proof}

\section{Asymptotic existence of $c$-chromatic BIBDs}

We are now almost ready to prove Theorem \ref{MainTheorem} in the case $k \geq 4$. The final
preliminary result we require uses Wilson's fundamental construction to obtain GDDs with a large
number of groups of large size which possess $2$-blocking systems with certain properties.

\begin{lemma}\label{TwoDifferentGDD}
Let $k$ and $\lambda$ be positive integers such that $k \geq 4$. Then there exist positive integers
$t$ and $a_0$ such that if $a$, $a^{\dag}$ and $a^{\ddag}$ are integers such that $a \geq a_0$,
$a^{\dag} \leq a$, $a^{\ddag} \leq a$ and $a \equiv a^{\dag} \equiv a^{\ddag} \equiv 0 \mod
k(k-1))$, then there exists a $(k,\lambda)$-GDD of type $a^t(a^{\dag})^1(a^{\ddag})^1$ which has a
$2$-blocking system such that each set of the blocking system intersects each group $G$ of the GDD
in exactly $\frac{|G|}{2}$ points.
\end{lemma}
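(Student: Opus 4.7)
The plan is to apply Wilson's fundamental construction, using as master a $(k+2,1)$-GDD obtained from a variant of Lemma \ref{ManyGroupGDD}, and inflating the master points so that two designated master groups yield the special groups of sizes $a^{\dag}$ and $a^{\ddag}$.

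Fix $g = k(k-1)$. By hypothesis $g$ divides each of $a, a^{\dag}, a^{\ddag}$, so $s := a/g$, $s^{\dag} := a^{\dag}/g$, and $s^{\ddag} := a^{\ddag}/g$ are positive integers with $s^{\dag}, s^{\ddag} \leq s$. For $k \geq 4$ the value $g$ is even, exceeds $2(k+2)-2$, and $g/2 \geq 6$. Invoking Lemma \ref{ManyGroupGDD} with $k$ replaced by $k+2$---and mildly strengthening the conditions defining the family $F$ in its proof to require $f_1 + \cdots + f_{g/2} \geq 3$ and $f_{g/2+1} + \cdots + f_{g} \geq 3$---one obtains, for all sufficiently large $T$, a $(k+2,1)$-GDD $(V,\mathcal{G},\mathcal{B})$ of type $g^T$ with a $2$-blocking system $\{S_1,S_2\}$ satisfying $|S_j \cap H| = g/2$ for each $H \in \mathcal{G}$ and the strengthened condition $|S_j \cap B| \geq 3$ for every $B \in \mathcal{B}$ and $j \in \{1,2\}$. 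Take $T = t + 2$ for a fixed $t$ above this threshold.

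Designate two master groups $H_{t+1}, H_{t+2}$ as special. Inflate each master point $x$ to a set $W_x$ of size $s$, $s^{\dag}$, or $s^{\ddag}$ according as $x$ lies in a regular group, in $H_{t+1}$, or in $H_{t+2}$, and let $S_j^{\star} := \bigcup_{x \in S_j} W_x$ for $j \in \{1,2\}$. Since the weight is uniform within each master group, each final group is split exactly in half by $\{S_1^{\star}, S_2^{\star}\}$. For every master block $B$, install an ingredient $(k,\lambda)$-GDD on $\bigcup_{x \in B} W_x$ with groups $\{W_x : x \in B\}$; its type is one of $s^{k+2}$, $s^{k+1}(s^{\dag})^1$, $s^{k+1}(s^{\ddag})^1$, or $s^{k}(s^{\dag})^1(s^{\ddag})^1$. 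Because these ingredients are GDDs on $k+2$ groups with block size $k$, non-uniform group sizes are permitted, and Theorem \ref{LamWilTheorem}, applied to a suitable family of edge-coloured digraphs, supplies each of the four required GDDs for all sufficiently large $s$. This determines $a_0$.

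Each block of the resulting GDD comes from an ingredient block on some master block $B$, and uses $k$ of the $k+2$ inflated sets $\{W_x : x \in B\}$. Since $|S_j \cap B| \geq 3$, omitting at most two of these sets still leaves at least one $W_x$ with $x \in S_j \cap B$, so the block meets $S_j^{\star}$. Hence $\{S_1^{\star}, S_2^{\star}\}$ is a $2$-blocking system with the required half-splitting property, and the construction yields the desired $(k,\lambda)$-GDD of type $a^t(a^{\dag})^1(a^{\ddag})^1$. The main obstacle is verifying both the strengthened master design and the existence of the four non-uniform ingredient GDDs: each reduces to an application of Theorem \ref{LamWilTheorem}, and the required checks of conditions (a), (b), (c) proceed by minor variations on the calculations already performed in the proof of Lemma \ref{ManyGroupGDD}---conceptually routine but technically tedious.
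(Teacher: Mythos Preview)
Your approach via Wilson's fundamental construction is natural, but it differs from the paper's in a way that creates a fatal gap. You fix a uniform master $(k+2,1)$-GDD and vary the weights to obtain the non-uniform group type; the paper does the opposite, taking a master of non-uniform type (a transversal design of block size $t+2$ with two groups truncated to sizes $\frac{a^{\dag}}{k(k-1)}$ and $\frac{a^{\ddag}}{k(k-1)}$) and giving every master point the uniform weight $k(k-1)$. The paper's ingredients are therefore $(k,\lambda)$-GDDs of uniform type $(k(k-1))^s$ for $s\in\{t,t+1,t+2\}$, whose existence with a half-split blocking system comes straight from Lemmas \ref{ManyGroupGDD} and \ref{ManyGroupGDDk=4}, and the blocking system of the final GDD is carried by the ingredients (via the split of the weight set $Z$) rather than by the master. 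This neatly sidesteps both difficulties your approach runs into.

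The fatal gap is the existence of your non-uniform ingredients. You assert that Theorem \ref{LamWilTheorem} supplies $(k,\lambda)$-GDDs of types $s^{k+1}(s^{\dag})^1$ and $s^{k}(s^{\dag})^1(s^{\ddag})^1$ for all sufficiently large $s$ and arbitrary $s^{\dag},s^{\ddag}\leq s$. But $s^{\dag},s^{\ddag}$ are unrestricted positive integers (the hypothesis only forces $a^{\dag},a^{\ddag}\equiv 0\ ({\rm mod}\ k(k-1))$), and the necessary divisibility conditions for these GDDs can fail outright. Concretely, take $k=4$, $\lambda=1$, $s^{\dag}=s^{\ddag}=1$: in a putative $(4,1)$-GDD of type $s^{4}1^{2}$, a point in a group of size $s$ would have replication number $\frac{3s+2}{3}$, which is never an integer, so this ingredient does not exist for any $s$. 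Since master blocks meeting both special groups do occur, your construction cannot be completed in this case. Theorem \ref{LamWilTheorem} is of no help here: it produces decompositions of $\lambda K^{(C)}_v$ for large $v$, and there is no encoding that yields a GDD on a fixed number $k+2$ of groups with one or two prescribed small group sizes. Your secondary claim, that the strengthened master (every block meeting each $S_j$ in at least three points) follows by ``minor variations'' on Lemma \ref{ManyGroupGDD}, is also optimistic: the specific vectors $\vect{f'},\vect{f''},\vect{f'''}$ used in parts (a) and (b) of that proof concentrate almost all mass on a single coordinate and place only one point in the opposite half, so they would not lie in your restricted family $F$ and the congruence arguments would need to be redone from scratch.
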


\begin{proof}[{\bf Proof}]
By Lemma \ref{ManyGroupGDD} or Lemma \ref{ManyGroupGDDk=4}, it is easy to see that there is a
positive integer $t$ such that for each $s \in \{t,t+1,t+2\}$ there exists a $(k,\lambda)$-GDD of
type $(k(k-1))^s$ with a $2$-blocking system such that each set of the blocking system intersects
each group of the design in exactly $\frac{k(k-1)}{2}$ points. The main result of \cite{ChErSt}
implies that, for a given positive integer $k'$, there exists a transversal design with group size
$g'$ and block size $k'$ for all sufficiently large integers $g'$. Thus, there is an integer $g_0$
such that for any integer $g \geq g_0$ there exists a transversal design with group size $g$ and
block size $t+2$. Let $a$, $a^{\dag}$ and $a^{\ddag}$ be integers such that $a \geq g_0k(k-1)$,
$a^{\dag} \leq a$, $a^{\ddag} \leq a$ and $a \equiv a^{\dag} \equiv a^{\ddag} \equiv 0 \mod
k(k-1))$. Then there exists a transversal design with group size $\frac{a}{k(k-1)}$ and block size
$t+2$. By deleting some points from this transversal design we can obtain a
$(\{t,t+1,t+2\},1)$-GDD $(V,\mathcal{F},\mathcal{A})$ of type $(\frac{a}{k(k-1)})^t
(\frac{a^{\dag}}{k(k-1)})^1 (\frac{a^{\ddag}}{k(k-1)})^1$.

Let $Z$ be a set with $|Z|=k(k-1)$ and let $\{Z_1,Z_2\}$ be a partition of $Z$ with
$|Z_1|=|Z_2|=\frac{k(k-1)}{2}$. Let $\mathcal{G} = \{F \times Z : F \in \mathcal{F}\}$. For each
block $A \in \mathcal{A}$, let $\mathcal{B}_A$ be a collection of blocks  such that $(A \times Z,
\{\{x\} \times Z:x \in A\}, \mathcal{B}_A)$ is a $(k,\lambda)$-GDD of type $(k(k-1))^{|A|}$ for
which $\{A \times Z_1,A \times Z_2\}$ is a blocking system (such a collection exists since $|A| \in
\{t,t+1,t+2\}$ and $|Z_1|=|Z_2|=\frac{k(k-1)}{2}$). Let
$$\mathcal{B} = \bigcup_{A \in \mathcal{A}}\mathcal{B}_A.$$
It is routine to check that $(V \times Z, \mathcal{G},\mathcal{B})$ is a $(k,\lambda)$-GDD of type
$a^t(a^{\dag})^1(a^{\ddag})^1$ for which $\{V \times Z_1,V \times Z_2\}$ is a blocking system.
Since $|(V \times Z_1) \cap G| = |(V \times Z_2) \cap G| = \frac{|G|}{2}$ for each $G \in
\mathcal{G}$, the proof is complete.
\end{proof}

\begin{proof}[{\bf Proof of Theorem \ref{MainTheorem} in the case $\boldsymbol{k \geq 4}$.}]
It is known that for any positive integer $\lambda$, a $2$-chromatic $(v,4,\lambda)$-BIBD exists
for each $(4,\lambda)$-admissible integer $v$ (see \cite{FrGrLiRo,HoLiPh1,HoLiPh2,RoCo}), so we
may assume that if $k=4$ then $c \geq 3$. Since there are only finitely many congruence classes
modulo $k(k-1)$ it suffices to show that, for each $(k,\lambda)$-admissible integer $\ell' \in
\{0,1,\ldots,k(k-1)-1\}$, there is a $c$-chromatic $(v,k,\lambda)$-BIBD for each sufficiently
large integer $v$ such that $v \equiv \ell' \mod k(k-1))$.

Let $\ell' \in \{0,1,\ldots,k(k-1)-1\}$ be a $(k,\lambda)$-admissible integer. We will first deal
with the case where $k \geq 5$ or where $k=4$ and $\ell'$ is odd. The special case where $k=4$ and
$\ell'$ is even will be dealt with later. By Lemma \ref{2ChromaticInEachCongClass} or Lemma
\ref{2ChromaticBlockSize4} there is a positive integer $u$ such that $u \equiv \ell' \mod k(k-1))$,
if $k \geq 5$ then $u \geq 2k-1$, if $k=4$ then $u \in \{7,9,11,13,15,17\}$, and there exists a
$(u,k,\lambda)$-BIBD with a $(\lfloor\frac{u-1}{2}\rfloor,\lfloor\frac{u-1}{2}\rfloor)$-blocking
system. Let $m=\lcm(k(k-1),u-1)$. Now, since there are only finitely many congruence classes modulo
$m$ it suffices to show that, for each $(k,\lambda)$-admissible integer $\ell'' \in
\{0,1,\ldots,m-1\}$ for which $\ell'' \equiv \ell' \mod k(k-1))$, there is a $c$-chromatic
$(v,k,\lambda)$-BIBD for each sufficiently large integer $v$ such that $v \equiv \ell'' \mod m)$.

Let $\ell'' \in \{0,1,\ldots,m-1\}$ be a $(k,\lambda)$-admissible integer for which $\ell'' \equiv
\ell' \mod k(k-1))$. Note the following facts.
\begin{itemize}
    \item[(i)]
By Lemma \ref{TwoDifferentGDD} there are positive integers $t$ and $a_0$ such that for any
integers $x$, $x^{\dag}$ and $x^{\ddag}$ such that $x \geq a_0$, $x^{\dag} \leq x$, $x^{\ddag}
\leq x$ and $x \equiv x^{\dag} \equiv x^{\ddag} \equiv 0 \mod k(k-1))$ there exists a
$(k,\lambda)$-GDD of type $x^t(x^{\dag})^1(x^{\ddag})^1$ which has a $2$-blocking system such
that each set of the blocking system intersects each group $G$ of the GDD in exactly
$\frac{|G|}{2}$ points.
    \item[(ii)]
Since there exists a $(u,k,\lambda)$-BIBD with a
$(\lfloor\frac{u-1}{2}\rfloor,\lfloor\frac{u-1}{2}\rfloor)$-blocking system, by Lemma
\ref{BasicConstructionsInfinity} (b) there is a positive integer $n_0$ such that, for each
integer $n \geq n_0$ with $n \equiv 0 \mod m)$, there exists a $(k,\lambda)$-GDD of type
$u^11^n$ which has an
$(\frac{n}{2}+\lfloor\frac{u}{2}\rfloor,\frac{n}{2}+\lfloor\frac{u}{2}\rfloor)$-blocking system
such that each set of the blocking system intersects the group of size $u$ in exactly
$\lfloor\frac{u}{2}\rfloor$ points (note that $n+u$ is $(k,\lambda)$-admissible since $m \equiv
0 \mod k(k-1))$, that $\lfloor\frac{u-1}{2}\rfloor \leq \lfloor\frac{u}{2}\rfloor$, and that
$\frac{n}{u-1}\lfloor\frac{u-1}{2}\rfloor \leq \frac{n}{2}$).
    \item[(iii)]
By Lemma \ref{CChromaticExample} there is an integer $w > m$ such that $w \equiv \ell'' \mod
m)$ and there exists a $c$-chromatic $(w,k,\lambda)$-BIBD with an
$(s_1,s_2,\ldots,s_c)$-blocking system for some integers $s_1,s_2,\ldots,s_c$ satisfying $s_i
\leq \lfloor\frac{w-1}{2}\rfloor$ for each $i \in \{1,2,\ldots,c\}$ (if $k=4$ then $u \in
\{7,9,11,13,15,17\}$ which implies that $m \not\equiv 0 \mod 13)$).
\end{itemize}

Let $N$ be the smallest integer such that $N \equiv 0 \mod m)$ and $N \geq
\max\{a_0,n_0+m(t-1),w-u\}$. Let $v \geq Nt+n_0+m(t-1)+w$ be an integer such that $v \equiv \ell''
\mod m)$. We will construct a $c$-chromatic $(v,k,\lambda)$-BIBD to complete the proof.

It can be seen that there are unique integers $a$ and $a^{\dag}$ such that $a \equiv a^{\dag}
\equiv 0 \mod m)$, $v-w=at+a^{\dag}$ and $n_0 \leq a^{\dag} \leq n_0+m(t-1)$ (note that $v \equiv w
\mod m)$). Now since $v \geq Nt+n_0+m(t-1)+w$, we have that $at+a^{\dag} \geq Nt+n_0+m(t-1)$ and
thus, since $a^{\dag} \leq n_0+m(t-1)$, we have that $a \geq N$. Note that $N \geq a_0$, that $N
\geq n_0+m(t-1) \geq a^{\dag}$, that $N \geq w-u$ and, since $m \equiv 0 \mod k(k-1))$, that $a
\equiv a^{\dag} \equiv w-u \equiv 0 \mod k(k-1))$. Thus, by (i) there exists a $(k,\lambda)$-GDD
$(V, \mathcal{F} \cup \{F^{\dag}, F^{\ddag}\}, \mathcal{A})$ of type $a^t(a^{\dag})^1(w-u)^1$,
where $|F|=a$ for all $F \in \mathcal{F}$, $|F^{\dag}|=a^{\dag}$ and $|F^{\ddag}| = w-u$, which has
a $2$-blocking system $\{R_1,R_2\}$ such that $|R_1 \cap F| = |R_2 \cap F|=\frac{|F|}{2}$ for each
group $F \in \mathcal{F} \cup \{F^{\dag}, F^{\ddag}\}$.

Let $U$ be a set disjoint from $V$ such that $|U|=u$ and let $U_1$ and $U_2$ be disjoint subsets of
$U$ with $|U_1|=|U_2|=\lfloor\frac{u}{2}\rfloor$. We will now construct a $c$-chromatic
$(v,k,\lambda)$-BIBD on the point set $V \cup U$.
\begin{itemize}
    \item
For each group $F \in \mathcal{F}$, let $\mathcal{B}_F$ be a collection of blocks such that $(F
\cup U,\{U\} \cup \{\{f\}:f \in F\},\mathcal{B}_F)$ is a $(k,\lambda)$-GDD of type $u^11^a$ for
which $\{(F \cap R_1) \cup U_1,(F \cap R_2) \cup U_2\}$ is a blocking system. Such collections
exist by (ii) since $a \geq n_0$, $a \equiv 0 \mod m)$, $|F \cap R_1| = |F \cap R_2| =
\frac{a}{2}$ for all $F \in \mathcal{F}$, and $|U_1|=|U_2|=\lfloor\frac{u}{2}\rfloor$.
    \item
Let $\mathcal{B}^{\dag}$ be a collection of blocks such that $(F^{\dag} \cup U,\{U\} \cup
\{\{f\}:f \in F^{\dag}\},\mathcal{B}^{\dag})$ is a $(k,\lambda)$-GDD of type $u^11^{a^{\dag}}$
for which $\{(F^{\dag} \cap R_1) \cup U_1,(F^{\dag} \cap R_2) \cup U_2\}$ is a blocking system.
Such a collection exists by (ii) since $a^{\dag} \geq n_0$, $a^{\dag} \equiv 0 \mod m)$,
$|F^{\dag} \cap R_1| = |F^{\dag} \cap R_2| = \frac{a^{\dag}}{2}$, and
$|U_1|=|U_2|=\lfloor\frac{u}{2}\rfloor$.
    \item
Let $\mathcal{B}^{\ddag}$ be a collection of blocks such that $(F^{\ddag} \cup
U,\mathcal{B}^{\ddag})$ is a $c$-chromatic $(w,k,\lambda)$-BIBD which has a blocking system
$\{R^{\ddag}_1,R^{\ddag}_2,\ldots,R^{\ddag}_c\}$ such that $R^{\ddag}_1 \subseteq (F^{\ddag}
\cap R_1) \cup U_1$, $R^{\ddag}_2 \subseteq (F^{\ddag} \cap R_2) \cup U_2$, and
$\{R^{\ddag}_1,R^{\ddag}_2,\ldots,R^{\ddag}_c\}$ is a partition of $F^{\ddag} \cup U$. Such a
collection exists by (iii) since $|(F^{\ddag} \cap R_1) \cup U_1|=|(F^{\ddag} \cap R_2) \cup
U_2|=\lfloor\frac{w}{2}\rfloor$.
\end{itemize}

Let $S_1=(R_1 \setminus F^{\ddag}) \cup R^{\ddag}_1$, $S_2=(R_2 \setminus F^{\ddag}) \cup
R^{\ddag}_2$, and $S_i=R^{\ddag}_i$ for each $i \in \{3,4,\ldots,c\}$. Note that
$\{S_1,S_2,\ldots,S_c\}$ is a partition of $V \cup U$. Let
$$\mathcal{B} = \mathcal{A} \cup \left(\bigcup_{F \in \mathcal{F}}\mathcal{B}_F\right) \cup \mathcal{B}^{\dag} \cup \mathcal{B}^{\ddag}.$$
It is routine to check that $(V \cup U, \mathcal{B})$ is a $c$-chromatic $(v,k,\lambda)$-BIBD for
which $\{S_1,S_2,\ldots,S_c\}$ is a blocking system (note that no block in $\mathcal{B} \setminus
\mathcal{B}^{\ddag}$ has more than one point in $F^{\ddag} \cup U$ and hence no such block can be
a subset of any set in $\{S_3,S_4,\ldots,S_c\}$).

We now consider the special case where $k=4$ and $\ell'$ is even. We proceed exactly as we did in
the main case, with four exceptions. Firstly we note that by Lemma \ref{2ChromaticBlockSize4} there
is an integer $u$ such that $u \equiv \ell' \mod k(k-1))$, $u \in \{6,8,10,12,14,16\}$, and there
exists a $(u,k,\lambda)$-BIBD with a $(\frac{u}{2},\frac{u}{2})$-blocking system. Secondly, we
define $m=\lcm(12,u)$. Thirdly, instead of (ii) we instead observe the following.
\begin{itemize}
    \item[(ii)$'$]
Since there exists a $(u,k,\lambda)$-BIBD with a $(\frac{u}{2},\frac{u}{2})$-blocking system,
by Lemma \ref{BasicConstructionsNoInfinity} (b) there is a positive integer $n_0$ such that,
for all integers $n$ for which $n \geq n_0$ and $n \equiv 0 \mod m)$, there exists a
$(k,\lambda)$-GDD of type $u^11^n$ which has a $(\frac{u+n}{2},\frac{u+n}{2})$-blocking system
such that each set of the blocking system intersects the group of size $u$ in exactly
$\frac{u}{2}$ points (note that $n+u$ is $(k,\lambda)$-admissible since $m \equiv 0 \mod
k(k-1))$).
\end{itemize}
Lastly, in our justification of (iii) we must note that $m \not\equiv 0 \mod 13)$ since $u \in
\{6,8,10,12,14,16\}$.

Except as noted, the arguments given in the main case hold without any alteration. \end{proof}

\section{The case of block size 3}

It only remains for us to prove Theorem \ref{MainTheorem} in the case $k=3$. When $\lambda=1$ this
has already been achieved by de Brandes, Phelps and R\"{o}dl \cite{DePhRo}. In this final section
we generalise their result to cover all values of $\lambda$. The methods we employ are similar, but
not identical, to theirs.

\begin{lemma}\label{TwoTDsBlockSize3}
There exist two transversal designs $(V,\mathcal{G},\mathcal{B}^{\dag})$ and
$(V,\mathcal{G},\mathcal{B})$ with group size $3$ and block size $3$ having the same point set and
the same group set such that
\begin{itemize}
    \item
there are two distinct points $x,y \in V$ such that every block which is in $\mathcal{B}$ but
not in $\mathcal{B}^{\dag}$ contains either $x$ or $y$; and
    \item
there is a partition $\{S_1,S_2,S_3\}$ of $V$ such that
\begin{itemize}
    \item[(i)]
$|S_i \cap G|=1$ for all $i \in \{1,2,3\}$ and $G \in \mathcal{G}$;
    \item[(ii)]
$\{S_1,S_2,S_3\}$ is a blocking set for $(V,\mathcal{B}^{\dag})$; and
    \item[(iii)]
$S_1 \in \mathcal{B}$.
\end{itemize}
\end{itemize}
\end{lemma}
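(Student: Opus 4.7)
The plan is to construct both transversal designs explicitly on a nine-point set and verify the conditions by direct inspection. Let $V = \Z_3 \times \Z_3$ with groups $\mathcal{G} = \{G_0, G_1, G_2\}$ where $G_i = \{i\} \times \Z_3$. Take the partition $S_1 = \{(0,0),(1,0),(2,0)\}$, $S_2 = \{(0,1),(1,1),(2,1)\}$, $S_3 = \{(0,2),(1,2),(2,2)\}$, which clearly satisfies property (i). Define $\mathcal{B}$ as the transversal design arising from the Latin square $L(a,b) = a+b \pmod{3}$, so $\mathcal{B} = \{\{(0,a),(1,b),(2,a+b)\} : a, b \in \Z_3\}$. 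Since $L(0,0)=0$, we have $S_1 \in \mathcal{B}$, giving (iii).

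Define $\mathcal{B}^{\dag}$ as the transversal design arising from the Latin square
\begin{equation*}
L' = \begin{pmatrix} 1 & 0 & 2 \\ 0 & 2 & 1 \\ 2 & 1 & 0 \end{pmatrix},
\end{equation*}
whose diagonal $(1,2,0)$ satisfies $L'(i,i) \neq i$ for each $i$. A block $\{(0,a),(1,b),(2,L'(a,b))\}$ of $\mathcal{B}^{\dag}$ is contained in some $S_j$ only if $a = b = L'(a,b)$, which never happens; hence $\{S_1, S_2, S_3\}$ is a blocking system for $\mathcal{B}^{\dag}$, establishing (ii). For the remaining condition, take $x = (2,0)$ and $y = (2,1)$. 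Inspection reveals that $L$ and $L'$ agree exactly on the anti-diagonal positions $(0,2), (1,1), (2,0)$, where both equal $2$, so $\mathcal{B} \cap \mathcal{B}^{\dag}$ consists of the three blocks of $\mathcal{B}$ through $(2,2)$. Consequently the six blocks in $\mathcal{B} \setminus \mathcal{B}^{\dag}$ are those blocks of $\mathcal{B}$ whose third coordinate lies in $\{0, 1\}$; each such block contains either $(2,0)$ or $(2,1)$, as required.

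The main obstacle is producing a Latin square $L'$ that is simultaneously fixed-point-free on the diagonal and agrees with $L$ along a parallel class of $\mathcal{B}$ through a single point $(2,2)$, so that the symmetric difference $\mathcal{B} \triangle \mathcal{B}^{\dag}$ is confined to blocks through the two remaining points of $G_2$. The key observation is that once the three blocks of $\mathcal{B}$ through $(2,2)$ are fixed, exactly two transversal designs complete them, and one of these completions satisfies the diagonal condition; a short case check picks out $L'$ above.
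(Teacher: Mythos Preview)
Your proof is correct and, in fact, constructs exactly the same pair of transversal designs as the paper: writing the paper's blocks $\{(0,i),(1,i+j),(2,i+2j+1)\}$ in terms of $a=i$ and $b=i+j$ gives third coordinate $-a+2b+1$, which is precisely your $L'$, and applying the paper's transposition $\rho=(0\ 1)$ to that third coordinate yields $a+b$, your $L$. The only difference is presentational---you describe the two Latin squares directly, while the paper parameterises $\mathcal{B}^{\dag}$ algebraically and obtains $\mathcal{B}$ by permuting one coordinate---so the approaches are essentially identical.
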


\begin{proof}[{\bf Proof}] Let $V = \Z_3 \times \Z_3$ and let $\mathcal{G} = \{\{x\} \times \Z_3:x \in \Z_3\}$. Let
$\rho$ be the permutation $(0\ 1)$ of $\Z_3$. Let
$\mathcal{B}^{\dag}=\{\{(0,i),(1,i+j),(2,i+2j+1)\}:i,j \in \Z_3\}$ and let
$\mathcal{B}=\{\{(0,i),(1,i+j),(2,\rho(i+2j+1))\}:i,j \in \Z_3\}$, where the addition is
considered modulo $3$. It is easy to check that $\mathcal{B}^{\dag}$ and $\mathcal{B}$ satisfy the
required conditions (take $\{x,y\}=\{(2,0),(2,1)\}$ and $S_i = \Z_3 \times \{i-1\}$ for each $i
\in \{1,2,3\}$). \end{proof}

\begin{lemma}\label{BIBDBlockSize3}
Let $w$ and $\lambda$ be positive integers such that $w \geq 5$ and $w$ is
$(3,\lambda)$-admissible. Then there exists a $(w,3,\lambda)$-BIBD with a $3$-blocking system such
that the sets of the system partition the point set of the BIBD and the sizes of any two sets of
the system differ by at most $1$.
\end{lemma}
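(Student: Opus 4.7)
The plan is to reduce to the case $\lambda = \lambda_0(w)$, where $\lambda_0(w)$ is the smallest positive integer for which $w$ is $(3,\lambda_0)$-admissible, and then to use a blow-up based on Lemma~\ref{TwoTDsBlockSize3}. Since every admissible $\lambda$ is a multiple of $\lambda_0$, taking $\lambda/\lambda_0$ copies of each block of a $(w,3,\lambda_0)$-BIBD with a balanced $3$-blocking system yields the desired $(w,3,\lambda)$-BIBD with the same blocking system. A brief check of the admissibility conditions shows $\lambda_0 \in \{1,2,3,6\}$ depending on $w \bmod 6$; the case $\lambda_0 = 1$ (i.e.\ $w \equiv 1,3 \pmod 6$) is handled in \cite{DePhRo}, so I focus on $\lambda_0 \in \{2,3,6\}$.

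When $3 \mid w$ and $\bar{w} := w/3$ is itself $(3,\lambda_0)$-admissible, take a $(\bar{w},3,\lambda_0)$-BIBD $(\bar{V},\bar{\mathcal{B}})$ and apply the following blow-up: let $V = \bar{V} \times \{1,2,3\}$, set $S_i = \bar{V} \times \{i\}$ for $i \in \{1,2,3\}$, and for each block $\bar{B} \in \bar{\mathcal{B}}$ place on $\bar{B} \times \{1,2,3\}$ a copy of the transversal design $\mathcal{B}^{\dag}$ from Lemma~\ref{TwoTDsBlockSize3}, with groups $\{p\} \times \{1,2,3\}$ for $p \in \bar{B}$ and blocking partition $\{\bar{B} \times \{i\} : i \in \{1,2,3\}\}$. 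Then, for each $p \in \bar{V}$, add $\lambda_0$ copies of the column triple $\{p\}\times\{1,2,3\}$ as a block. Pair counting shows this is a $(3\bar{w},3,\lambda_0)$-BIBD: same-column pairs are covered $\lambda_0$ times by the column triples, and cross-column pairs are covered $\lambda_0$ times by the transversal designs, since the underlying pair in $\bar{V}$ lies in $\lambda_0$ blocks of $\bar{\mathcal{B}}$. Every transversal design block is multi-coloured by the blocking-set property of $\mathcal{B}^{\dag}$ and each column triple meets every $S_i$, so $\{S_1,S_2,S_3\}$ is a balanced $3$-blocking system.

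The remaining cases are those where $3 \nmid w$, or where $3 \mid w$ but $\bar{w}$ is not $(3,\lambda_0)$-admissible (for example $w \equiv 6 \pmod{18}$ with $\lambda_0 = 2$, where $\bar{w}$ is $(3,6)$-admissible but not $(3,2)$-admissible). For $w = 3\bar{w} + r$ with $r \in \{1,2\}$, I would adapt the blow-up starting from a suitable base design on $\bar{V} \cup \{\infty_1,\dots,\infty_r\}$ --- a $(3,\lambda_0)$-GDD of type $3^{\bar{w}} r^1$ or a $(\bar{w}+r,3,\lambda_0)$-BIBD --- blowing up each ordinary point by a factor of $3$, leaving each $\infty_j$ un-inflated, and assigning the $\infty_j$ to colour classes so that the partition remains balanced. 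Blocks of the base design involving some $\infty_j$ are filled by smaller multi-coloured configurations; here the second transversal design $\mathcal{B}$ of Lemma~\ref{TwoTDsBlockSize3} (in which $S_1$ is itself a block) is used as a local switch, replacing a copy of $\mathcal{B}^{\dag}$ on some $\bar{B} \times \{1,2,3\}$ by a copy of $\mathcal{B}$ while compensating elsewhere, so that no monochromatic triple is created near the distinguished group. The truly small values $w \in \{5,6,7,8\}$ are handled directly, and for $w \in \{5,6\}$ the verification is trivial since the balanced colour classes then have at most $2$ points, ruling out any monochromatic triple. The main obstacle is this last step: showing that, for every admissible $w \geq 5$ outside $w \equiv 1,3 \pmod 6$, the adapted blow-up can be carried out while maintaining both balance of the colour classes and the absence of monochromatic blocks; the two-TD switching structure of Lemma~\ref{TwoTDsBlockSize3} is precisely what provides the local flexibility needed for these adjustments.
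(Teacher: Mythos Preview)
The paper's proof is essentially a citation: Theorem~18.4 of \cite{CoRo} constructs, for every $(3,\lambda)$-admissible $w \geq 5$ with $w \notin \{6,8\}$, a 3-colourable $(w,3,\lambda)$-BIBD on a point set of the form $V \times \{0,1,2\}$ (plus at most two extra points) in which every block meets two distinct second coordinates; partitioning by second coordinate gives the balanced 3-blocking system. The cases $w \in \{6,8\}$ are handled directly.

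Your approach is a genuine attempt at a self-contained construction, but it has a real gap. The blow-up works cleanly when $3 \mid w$ and $\bar w=w/3$ is $(3,\lambda_0)$-admissible, and the small cases $w \in \{5,6\}$ are fine; but this still leaves all of $w \equiv 2,4,5 \pmod 6$ together with $w \equiv 6 \pmod{18}$, which is more than half of the admissible residues. For those you offer only a sketch, and the sketch has problems: a $(3,\lambda_0)$-GDD of type $3^{\bar w}r^1$ lives on $3\bar w+r$ points, not $\bar w+r$, so it cannot be the base design on $\bar V \cup \{\infty_1,\ldots,\infty_r\}$; and the non-uniform inflation (blow up $\bar V$ by $3$, leave each $\infty_j$ fixed) requires a specific treatment of every base block through an $\infty_j$ that you do not supply. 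The appeal to the second transversal design $\mathcal{B}$ of Lemma~\ref{TwoTDsBlockSize3} is also misplaced here: in the paper that lemma is used in the proof of Theorem~\ref{MainTheorem} for $k=3$ to interpolate chromatic numbers one step at a time between a $3$-chromatic design and one of chromatic number at least $c$, not to manufacture the balanced $3$-colouring of the present lemma. Finally, your citation of \cite{DePhRo} for the $\lambda_0=1$ case is doubtful: that paper establishes $c$-chromatic Steiner triple systems, not equitably $3$-colourable ones; the present paper draws the equitable $3$-colouring from \cite{CoRo}. The simplest repair is to do what the paper does and cite \cite{CoRo} directly.
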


\begin{proof}[{\bf Proof}] In Theorem 18.4 of \cite{CoRo}, it is proved that, for each positive integer $\lambda$ and
each $(3,\lambda)$-admissible integer $w$, there exists a $3$-colourable $(w,3,\lambda)$-BIBD. In
the proof, for each positive integer $\lambda$ and each $(3,\lambda)$-admissible integer $w$ such
that $w \geq 5$ and $w \notin \{6,8\}$, a $(w,3,\lambda)$-BIBD is explicitly constructed, on a
point set explicitly given as $V \times \{0,1,2\}$, $V \times \{0,1,2\} \cup \{(\infty,1)\}$ or $V
\times \{0,1,2\} \cup \{(\infty,1),(\infty,2)\}$ for some set $V$, in such a way that each block
of the BIBD contains two points with different second coordinates. Thus, the partition of the
point set suggested by the second coordinates gives a suitable blocking system. For each positive
integer $\lambda$ such that $6$ is $(3,\lambda)$-admissible, any partition of the point set of a
$(6,3,\lambda)$-BIBD into parts of size $2$ will form a suitable blocking system for the BIBD.
Finally, for each positive integer $\lambda$ such that $8$ is $(3,\lambda)$-admissible, the proof
exhibits an $(8,3,\lambda)$-BIBD whose point set contains two disjoint subsets of size $3$ which
are not blocks of the BIBD. Clearly, these two sets along with a third containing the remaining
points form a suitable blocking system for this BIBD.
\end{proof}

\begin{lemma}\label{HoleyBIBDBlockSize3}
Let $h \in \{0,1,2,3,4,5\}$ and let $\lambda$ be a positive integer such that $\lambda$ is even if
$h$ is even. Then there exists a $(3,\lambda)$-GDD $(V,\mathcal{G},\mathcal{B})$ of type $h^11^6$
with a $3$-blocking system such that the sets of the system partition the point set of the GDD,
each set of the system contains at least two points which are in groups of size $1$, and the sizes
of any two sets of the system differ by at most $1$.
\end{lemma}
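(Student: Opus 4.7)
The plan is to handle the six values of $h$ by explicit construction. As a preliminary observation, the integrality of the replication numbers $\lambda(5+h)/2$ at singleton points and $3\lambda$ at points of the $h$-group together force $\lambda$ to be even exactly when $h$ is even, matching the hypothesis of the lemma. Set $\lambda_{\min}(h) = 1$ for odd $h$ and $\lambda_{\min}(h) = 2$ for even $h$. If $(V, \mathcal{G}, \mathcal{B})$ is a $(3, \lambda_{\min}(h))$-GDD of type $h^1 1^6$ meeting the conclusion, then replacing $\mathcal{B}$ by $\mathcal{B}'$ in which each block appears $\lambda/\lambda_{\min}(h)$ times yields a $(3, \lambda)$-GDD of the same type with the same partition $\{S_1, S_2, S_3\}$ still serving as a blocking system satisfying all the stated properties. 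Thus it suffices to construct a GDD for each $h$ with $\lambda = \lambda_{\min}(h)$.

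Label the six singletons $1, \ldots, 6$ and, when $h \geq 1$, label the points of the $h$-group by $7, \ldots, 6+h$. The requirement that consecutive part sizes differ by at most $1$ forces, respectively for $h = 0, 1, 2, 3, 4, 5$, part sizes $(2,2,2)$, $(3,2,2)$, $(3,3,2)$, $(3,3,3)$, $(4,3,3)$, and $(4,4,3)$. For each such $h$ I would write down an explicit block list on the labelled point set together with an explicit partition $\{S_1, S_2, S_3\}$, and then verify: (a) every pair of points from distinct groups lies in exactly $\lambda_{\min}(h)$ blocks; (b) each $S_i$ contains at least two singletons; and (c) no block is contained in a single $S_i$. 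For $h = 0$ one takes any $(6,3,2)$-BIBD together with the partition $\{\{1,2\},\{3,4\},\{5,6\}\}$, and (c) is automatic because every block has three points while every part has only two. For $h = 1$ one takes the Fano plane on $\{1, \ldots, 7\}$ and a partition whose size-$3$ part is any non-line triple. The remaining four cases are handled similarly, with the blocks and partitions found by hand.

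The main obstacle is condition (c) in the cases $h \in \{3, 4, 5\}$, where parts of size $3$ or $4$ appear and could in principle contain an entire block. The tightest case is $h = 5$: the $11$ points must split into parts of sizes $4, 4, 3$ with each part holding at least two singletons, which forces the six singletons to split as $2 + 2 + 2$ and the five points of the $h$-group to split as $2 + 2 + 1$. After writing down the $15$ blocks of a specific $(3,1)$-GDD of type $5^1 1^6$ (built, for example, from a difference family on $\mathbb{Z}_{11}$), one then checks by direct inspection that the singletons and $h$-group points can be labelled so that none of those $15$ blocks is monochromatic under the forced partition shape. The cases $h \in \{2, 3, 4\}$ are easier variants of this same finite check, and once all six explicit constructions are in place the lemma follows from the preliminary reduction above.
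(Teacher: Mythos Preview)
Your proposal is correct and follows essentially the same route as the paper: reduce to $\lambda=\lambda_{\min}(h)$ by taking multiple copies of each block, then supply explicit block lists and partitions for each $h\in\{0,\ldots,5\}$ and verify the blocking-system conditions by inspection. The paper handles $h\in\{0,1\}$ by citing the preceding BIBD lemma rather than writing out the $(6,3,2)$-BIBD and Fano plane directly, and for $h\in\{2,3,4,5\}$ it tabulates the blocks and blocking-system parts explicitly, exactly as you describe.
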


\begin{proof}[{\bf Proof}] The result follows directly from Lemma \ref{BIBDBlockSize3} if $h \in \{0,1\}$, so assume
that $h \in \{2,3,4,5\}$. Let $\lambda_{\min}=1$ if $h$ is odd and $\lambda_{\min}=2$ if $h$ is
even. It suffices to find a $(3,\lambda_{\min})$-GDD of type $h^11^6$ with a $3$-blocking system
such that the sets of the system partition the point set of the GDD, each set of the system
contains at least two points which are in groups of size $1$, and the sizes of any two sets of the
system differ by at most $1$ (since we can take $\frac{\lambda}{\lambda_{\min}}$ copies of every
block in this design). Below, we give the blocks of such designs along with the sets of the
required blocking systems. In the interests of space we give the blocks in columns. In each case
the point set of the design is taken to be $\{0,1,\ldots,h-1\} \cup \{a,b,c,d,e,f\}$ where
$\{0,1,\ldots,h-1\}$ is the group of size $h$. The existence of such designs (not considering
blocking systems) was first established in \cite{St}.

\begin{center}
\begin{tabular}{|l|c|l|l|}
  \hline
  $h$&$\lambda_{\min}$& blocks & blocking system sets  \\  \hline
     &   & $\mathtt{000000111111aaabbc}$ & $\mathtt{\{0,b,d\}}$  \\[-1.8ex]
  $2$& 2 & $\mathtt{aabbeeaabcdebecddd}$ & $\mathtt{\{1,a,c\}}$  \\[-1.8ex]
     &   & $\mathtt{ddeeffbfcdefcfeffe}$ & $\mathtt{\{e,f\}}$  \\ \hline
     &   & $\mathtt{000111222ab}$ & $\mathtt{\{0,b,d\}}$  \\[-1.8ex]
  $3$& 1 & $\mathtt{abcaceabdcd}$ & $\mathtt{\{1,a,c\}}$  \\[-1.8ex]
     &   & $\mathtt{defbdffceef}$ & $\mathtt{\{2,e,f\}}$  \\ \hline
     &   & $\mathtt{000000111111222222333333ab}$ & $\mathtt{\{0,3,b,d\}}$  \\[-1.8ex]
  $4$& 2 & $\mathtt{aabbcdaabbccaabbddaacceecd}$ & $\mathtt{\{1,a,c\}}$   \\[-1.8ex]
     &   & $\mathtt{cdefefdedeffffcceebbddffef}$ & $\mathtt{\{2,e,f\}}$    \\  \hline
     &   & $\mathtt{000111222333444}$ & $\mathtt{\{0,3,b,d\}}$  \\[-1.8ex]
  $5$& 1 & $\mathtt{abcabcabdaceabd}$ & $\mathtt{\{1,4,a,c\}}$ \\[-1.8ex]
     &   & $\mathtt{dfeedfcefbdffce}$ & $\mathtt{\{2,e,f\}}$  \\ \hline
\end{tabular}
\end{center}
\end{proof}

The \emph{leave} of a partial BIBD $(V,\mathcal{B})$ with index $1$ is the graph with vertex set
$V$ in which a pair of vertices is adjacent if and only if the pair is not contained in any block
in $\mathcal{B}$. The next lemma is very similar to a result in \cite{FuLiRo1} and is used only in
the proof of Lemma \ref{AtLeastCChromaticMPT}.

\begin{lemma}\label{NotTripoleDecomp}
Let $v$ and $m$ be positive integers such that $v \equiv 4 \mod 6)$, $m \equiv 0 \mod 6)$, and $24
\leq m < v$. There exists a partial $(v,3,1)$-BIBD whose leave has a decomposition into $m$
perfect matchings on $v$ vertices, and the vertex-disjoint union of a complete graph of order $4$
and a perfect matching on $v-4$ vertices.
\end{lemma}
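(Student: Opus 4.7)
The plan is to recast the lemma as a graph decomposition. We seek to partition the edges of $K_v$ into (i) triangles (the blocks of the partial BIBD), (ii) $m$ edge-disjoint perfect matchings $M_1,\ldots,M_m$ of $V$, and (iii) one copy of $K_4\cup M'$, where $M'$ is a perfect matching on the $v-4$ vertices outside the $K_4$. Fix $V=\{x_1,\ldots,x_4\}\cup U$ with $|U|=v-4$. The required leave-degrees are $m+3$ at each $x_i$ and $m+1$ at each $u\in U$, both even since $v-m\equiv 0\pmod 2$, and the ``triangle part'' has $\tfrac{v^2-(m+2)v-8}{2}$ edges, which is divisible by $3$ using $v\equiv 4$ and $m\equiv 0\pmod 6$. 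Hence the basic divisibility obstructions to a triangle decomposition of $K_v\setminus L$ vanish.

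A natural starting point for the construction is a $1$-factorization of $K_v$, which exists since $v$ is even. I would choose this factorization so that exactly three of its factors together contain all six edges of the prospective $K_4$; this is always possible, since $K_4$ itself has a $1$-factorization into three $1$-factors, each of which extends to a $1$-factor of $K_v$. Call these three factors $K_4$-loaded and the remaining $v-4$ factors neutral. From the neutral factors, select $m$ to serve as the matchings $M_1,\ldots,M_m$ in the leave; this is possible because $m\le v-4$, which follows from $v-m\equiv 4\pmod 6$ combined with $m<v$. The surviving graph $G$, of regularity $v-1-m$, still contains all of $K_4$, so it remains to decompose $G\setminus K_4$ into a perfect matching $M'$ on $U$ together with triangles.

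The main obstacle is constructing this triangle-plus-matching decomposition of $G\setminus K_4$. My plan is to process the edges incident with $\{x_1,\ldots,x_4\}$ first by forming triangles of the form $\{x_i,u,u'\}$ that consume those edges together with suitable $U$-$U$ edges of $G$, then to pick $M'\subseteq U\times U$ so that the remaining $U$-$U$ edges of $G$ split into triangles on $U$. Choosing $M'$ is where the parity of individual $U$-degrees is balanced. The combinatorics closely parallels the argument in \cite{FuLiRo1}, which performs an analogous decomposition for a slightly different residual leave; the adaptation here handles the $K_4$-hole through the four distinguished points. The slack $m\ge 24$ keeps all residual degrees comfortably large, and $m<v$ keeps the pool of available $1$-factors nonempty, so the local swaps required in the $\{x_i,u,u'\}$-phase and in the choice of $M'$ always have room to avoid conflicts.
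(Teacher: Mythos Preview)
Your proposal has a genuine gap at the critical step. After peeling off the $m$ matchings and the edges of the prospective $K_4$, you assert that the remaining graph can be decomposed into triangles of the form $\{x_i,u,u'\}$ plus a matching $M'$ on $U$ plus triangles on $U$; but you only verify the divisibility conditions and then wave at ``local swaps'' having room. Even-degree and edge-count divisibility are necessary but far from sufficient for a triangle decomposition of an arbitrary graph, and the residual graph on $U$ here is not a complete graph or any other structure for which triangle-decomposability is known in advance. Saying the combinatorics ``closely parallels'' \cite{FuLiRo1} does not discharge this, because you are not invoking their result as a black box---you are re-running a construction with a different leave shape, and the hard part of \cite{FuLiRo1} is precisely this residual decomposition.

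The paper's proof avoids this entirely by a one-line reduction. Lemma~6.5 of \cite{FuLiRo1} already provides a decomposition of $K_v$ into triangles, $m$ perfect matchings, and the vertex-disjoint union of a $K_{1,3}$ with a perfect matching on $v-4$ vertices. Inspecting their construction (via their Lemma~6.3) shows that one of the triangles in the decomposition sits exactly on the three degree-$1$ vertices of the $K_{1,3}$. Transferring that single triangle from the block set into the leave converts the $K_{1,3}$ into a $K_4$, and you are done. So rather than rebuilding the decomposition from a $1$-factorization, you should cite \cite{FuLiRo1} directly and note this feature of their explicit construction.
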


\begin{proof}[{\bf Proof}]
The statement of Lemma 6.5 of \cite{FuLiRo1} gives a decomposition of a complete graph of order
$v$ into triangles, $m$ perfect matchings on $v$ vertices, and the vertex-disjoint union of a copy
of $K_{1,3}$ and a perfect matching on $v-4$ vertices. Furthermore, in each case the construction
given in the proof contains a triangle whose vertices are the vertices of degree $1$ in the copy
of $K_{1,3}$ (this arises through the use of Lemma 6.3 of \cite{FuLiRo1}, and the copy of
$K_{1,3}$ and the triangle are given explicitly in the proof of that result). Taking this
decomposition and removing the perfect matchings, the copy of $K_{1,3}$ and the special triangle
gives the required BIBD.
\end{proof}

\begin{lemma}\label{AtLeastCChromaticMPT}
Let $c$ be a positive integer such that $c \geq 3$. For all sufficiently large even integers $v$
there exists a partial $(v,3,1)$-BIBD which has chromatic number at least $c$ and whose leave is
\begin{itemize}
    \item
a perfect matching on $v$ vertices if $v \equiv 0,2 \mod 6)$; and
    \item
the vertex-disjoint union of a complete graph of order $4$ and a perfect matching on $v-4$
vertices if $v \equiv 4 \mod 6)$.
\end{itemize}
\end{lemma}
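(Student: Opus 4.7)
The plan is to produce a $c$-chromatic partial $(u,3,1)$-BIBD on a small fixed set and then embed it as a subdesign of a maximum triangle packing of $K_v$ whose leave has the required form. First, apply Lemma \ref{CChromaticPartialBIBD} with $k=3$ to fix a $c$-chromatic partial $(u,3,1)$-BIBD $(U,\mathcal{A}_0)$; here $u$ depends only on $c$. The target is then, for each sufficiently large even $v$ in the required residue class modulo $6$, to construct a partial $(v,3,1)$-BIBD $(V,\mathcal{A})$ with $U\subseteq V$, $\mathcal{A}_0\subseteq\mathcal{A}$, and leave equal to the structure prescribed in the statement. Because any weak coloring of $(V,\mathcal{A})$ restricts to a weak coloring of $(U,\mathcal{A}_0)$, the containment $\mathcal{A}_0\subseteq\mathcal{A}$ automatically forces the chromatic number of $(V,\mathcal{A})$ to be at least $c$, so it suffices to construct such an embedding.

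For the construction I would use Lemma \ref{NotTripoleDecomp} in the case $v\equiv 4 \pmod{6}$, and an analogous statement (obtainable along the same lines from the results in \cite{FuLiRo1}) in the cases $v\equiv 0,2 \pmod{6}$, to produce a partial $(v,3,1)$-BIBD $(V,\mathcal{C}_0)$ whose leave is the desired final leave together with $m$ additional vertex-disjoint perfect matchings on $V$, for some integer $m\equiv 0 \pmod{6}$ chosen large enough for later use. Place $U$ inside $V$ so that, when $v\equiv 4 \pmod{6}$, $U$ is disjoint from the four vertices of the $K_4$ in the target leave. The $m$ extra perfect matchings in the leave of $\mathcal{C}_0$ represent pairs of $V$ that are not yet covered by any block, and they supply the slack needed to edit the design.

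The main obstacle is to convert $\mathcal{C}_0$ into the required $\mathcal{A}$ by an exchange argument: remove the blocks of $\mathcal{C}_0$ whose pair-coverage meets the pair-coverage of $\mathcal{A}_0$, insert the blocks of $\mathcal{A}_0$, and then re-cover the freed pairs together with the $m$ perfect matchings by a further collection of triangles on $V$, in such a way that the prescribed leave is preserved. This reduces to a triangle decomposition problem on a single explicit graph on $V$ and can be handled by a Wilson-style decomposition theorem once that graph is large enough and satisfies the right divisibility conditions. The freedom to choose $m$ in the range $24\le m<v$ allowed by Lemma \ref{NotTripoleDecomp} (together with $m\equiv 0\pmod{6}$) is exactly what is used to force these divisibility conditions. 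The delicate bookkeeping is verifying that, after the swap, the leave is exactly the prescribed $K_4$ plus matching (respectively, perfect matching), with no residual uncovered pairs and no pair covered twice.
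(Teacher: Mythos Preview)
Your plan has a genuine gap at the exchange step. After you delete from $\mathcal{C}_0$ the blocks meeting $\mathcal{A}_0$ and insert $\mathcal{A}_0$, the graph you must still triangle-decompose is the union of the $m$ perfect matchings on all of $V$ together with a bounded-size local modification near $U$. Away from that bounded region this graph is just an $m$-regular graph built from edge-disjoint perfect matchings; such a graph is extremely sparse relative to $v$ and in general contains no triangles at all (two perfect matchings form a disjoint union of even cycles, and adding more matchings need not create any $K_3$). No ``Wilson-style'' theorem helps here: Wilson's asymptotic theorems decompose $\lambda K_v$ into copies of a fixed graph, and the dense-graph triangle-decomposition results require minimum degree a positive fraction of $v$. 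Divisibility alone does not force a triangle decomposition of a sparse graph, so the freedom to choose $m$ does not rescue the argument.

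The paper avoids this difficulty by never attempting to repair a packing on $V$. Instead it first completes the $c$-chromatic partial system to a full Steiner triple system $(U',\mathcal{A}')$ with $u'\equiv 1 \pmod 6$ (via \cite{BrHo}), and then builds the packing on $V=U'\cup W$ directly: Lemma~\ref{NotTripoleDecomp} is applied not to $V$ but to $W\cup\{x_0\}$ with $m=u'-1$, and each of the $u'-1$ spare perfect matchings $F_i$ is paired with a single point $x_i\in U'$ to produce the triples $\{x_i,y,z\}$ for $yz\in F_i$. All cross-pairs between $U'\setminus\{x_0\}$ and $W\cup\{x_0\}$ are covered by these fan triples, pairs inside $U'$ by $\mathcal{A}'$, and pairs inside $W\cup\{x_0\}$ by the packing from Lemma~\ref{NotTripoleDecomp}; the leave is exactly the prescribed $K_4$ plus matching, with no repair needed. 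For $v\equiv 0,2 \pmod 6$ the paper simply quotes the Doyen--Wilson-type embedding of \cite{FuLiRo2}. If you want to salvage your approach, the missing idea is precisely this: use the spare matchings as fans from external points of a containing STS, rather than trying to decompose them into triangles.
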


\begin{proof}[{\bf Proof}] It follows from Lemma \ref{CChromaticPartialBIBD} that for some positive integer $u$ there
is a partial $(u,3,1)$-BIBD $(U,\mathcal{A})$ which has chromatic number $c$. We can assume that $u
\geq 12$ by adding points to this BIBD, if necessary. By the main result of \cite{BrHo}, we can
embed $(U,\mathcal{A})$ in a $(u',3,1)$-BIBD $(U',\mathcal{A}')$ for some positive integer $u'$
such that $2u+1 \leq u' \leq 2u+5$ and $u' \equiv 1 \mod 6)$.

Let $v$ be an even integer such that $v \geq 2u'+2$. If $v \equiv 0,2 \mod 6)$, the main result of
\cite{FuLiRo2} then guarantees that this design can in turn be embedded in a partial
$(v,3,1)$-BIBD whose leave is a perfect matching on $v$, so we may assume that $v \equiv 4 \mod
6)$. Let $U'=\{x_0,x_1,\ldots,x_{u'-1}\}$ and let $W$ be a set of size $v-u'$ which is disjoint
from $U'$. By Lemma \ref{NotTripoleDecomp} there is a partial $(v-u'+1,3,1)$-BIBD $(W \cup
\{x_0\},\mathcal{A}'')$ whose leave has a decomposition into $u'-1$ perfect matchings
$F_1,F_2,\ldots,F_{u'-1}$ on $v-u'+1$ vertices, and the vertex-disjoint union of a complete graph
of order $4$ and a perfect matching on $v-u'-3$ vertices. Then $(U \cup W,\mathcal{B})$, where
    $$\mathcal{B}=\mathcal{A}' \cup \mathcal{A}'' \cup \{(x_i,y,z): yz \in E(F_i), i \in
    \{1,2,\ldots,u'-1\}\},$$
can be seen to be a $(v,3,1)$-BIBD which has chromatic number at least $c$ and whose leave is the
vertex-disjoint union of a complete graph of order $4$ and a perfect matching on $v-4$ vertices.
\end{proof}

\begin{proof}[{\bf Proof of Theorem \ref{MainTheorem} in the case $\boldsymbol{k=3}$.}] Let $N$ be the smallest even
integer such that for each even integer $u' \geq N$ there exists a partial $(u',3,1)$-BIBD which
has chromatic number at least $c$ and whose leave satisfies the conditions of Lemma
\ref{AtLeastCChromaticMPT}.

Let $v$ be a $(3,\lambda)$-admissible integer such that $v \geq 3N$. We will show that there exists
a $c$-chromatic $(v,3,\lambda)$-BIBD. Let $u$ and $h$ be the integers such that $v=3u+h$, $u$ is
even, and $0 \leq h \leq 5$. Then $u \geq N$ and there is a partial $(u,3,1)$-BIBD
$(U,\mathcal{A})$ which has chromatic number at least $c$ and whose leave satisfies the conditions
of Lemma \ref{AtLeastCChromaticMPT}. Let $\mathcal{A} = \{A_1,A_2,\ldots,A_t\}$. If $u \equiv 0,2
\mod 6)$, then let $P^*$ be a pair of points in $U$ which are adjacent in the leave of
$(U,\mathcal{A})$. If $u \equiv 4 \mod 6)$, then let $P^*$ be the set of the four points in $U$
which are mutually adjacent in the leave of $(U,\mathcal{A})$. In either case, let $\mathcal{P}$ be
a partition of $U \setminus P^*$ into pairs of points such that each pair is adjacent in the leave
of $(U,\mathcal{A})$.

Let $Z=\{z_1,z_2,z_3\}$ be a set and let $H$ be a set such that $|H|=h$. Let $\{H_1,H_2,H_3\}$ be a
partition of $H$ such that any two of $|H_1|$, $|H_2|$ and $|H_3|$ differ by at most $1$. Let $V=(U
\times Z) \cup H$ be a point set. Let $S_i=(U \times \{z_i\}) \cup H_i$ for each $i \in \{1,2,3\}$.
We will construct collections of blocks $\mathcal{C}_0,\mathcal{C}_1,\ldots,\mathcal{C}_t$ such
that
\begin{itemize}
    \item[(i)]
$(V,\mathcal{C}_i)$ is a $(v,3,\lambda)$-BIBD for each $i \in \{0,1,\ldots,t\}$;
    \item[(ii)]
$\{S_1,S_2,S_3\}$ is a blocking system for $(V,\mathcal{C}_0)$;
    \item[(iii)]
$(V,\mathcal{C}_t)$ contains an isomorphic copy of $(U,\mathcal{A})$;
    \item[(iv)]
the chromatic number of $(V,\mathcal{C}_{i+1})$ is at most one more than the chromatic number
of $(V,\mathcal{C}_i)$ for each $i \in \{0,1,\ldots,t-1\}$.
\end{itemize}
From (ii) it will follow that $(V,\mathcal{C}_0)$ has chromatic number at most 3, and from (iii) it
will follow that $(V,\mathcal{C}_t)$ has chromatic number at least $c$. Thus, from (iv) it will
follow that $(V,\mathcal{C}_j)$ has chromatic number $c$ for some $j \in \{0,1,\ldots,t\}$. So it
suffices to find such collections of blocks.

For each $A \in \mathcal{A}$, let $\mathcal{B}^{\dag}_A$ and $\mathcal{B}_A$ be collections of
blocks such that $(A \times Z,\{\{x\} \times Z:x \in A\},\mathcal{B}^{\dag}_A)$ and $(A \times
Z,\{\{x\} \times Z:x \in A\},\mathcal{B}_A)$ are $(3,\lambda)$-GDDs of type $3^3$ such that
\begin{itemize}
    \item
$(A \times \{z_1\},A \times \{z_2\},A \times \{z_3\})$ is a blocking system for $(A \times
Z,\mathcal{B}^{\dag}_A)$;
    \item
$A \times \{z_1\} \in \mathcal{B}_A$; and
    \item
there are two distinct points $x,y \in A\times Z$ such that every block which is in
$\mathcal{B}_A$ but not in $\mathcal{B}^{\dag}_A$ contains either $x$ or $y$;
\end{itemize}
(such collections exists by Lemma \ref{TwoTDsBlockSize3}, taking $\lambda$ copies of every block of
the transversal designs). For each $P \in \mathcal{P}$, let $\mathcal{B}_P$ be a collection of
blocks such that $((P\times Z) \cup H,\{H\} \cup \{\{x\}:x \in P \times Z\},\mathcal{B}_P)$ is a
$(3,\lambda)$-GDD of type $h^11^6$ for which $\{(P \times \{z_1\}) \cup H_1,(P \times \{z_2\}) \cup
H_2,(P \times \{z_3\}) \cup H_3\}$ is a blocking system (such a collection exists by Lemma
\ref{HoleyBIBDBlockSize3}). Let $\mathcal{B}_{P^*}$ be a collection of blocks such that
$((P^*\times Z) \cup H,\mathcal{B}_{P^*})$ is a $(3|P^*|+h,3,\lambda)$-BIBD for which $\{(P \times
\{z_1\}) \cup H_1,(P \times \{z_2\}) \cup H_2,(P \times \{z_3\}) \cup H_3\}$ is a blocking system
(such a collection exists by Lemma \ref{BIBDBlockSize3} since $v$ is $(3,\lambda)$-admissible and
$3|P^*|+h \equiv v \mod 6)$).

For each $k \in \{0,1,\ldots,t\}$, let
$$\mathcal{C}_k = \left(\bigcup_{i=1}^{k} \mathcal{B}_{A_i} \right) \cup \left(\bigcup_{i=k+1}^{t} \mathcal{B}^{\dag}_{A_i} \right) \cup \left(\bigcup_{P \in \mathcal{P}} \mathcal{B}_P \right) \cup \mathcal{B}_{P^*}.$$
It only remains to show that (i), (ii), (iii) and (iv) hold.

It is routine to check that (i), (ii) and (iii) hold (for (iii), the isomorphic copy of
$(U,\mathcal{A})$ is on the point set $U \times \{z_1\}$). To see that (iv) holds, let $i \in
\{0,1,\ldots,t-1\}$ and let $c_i$ be the chromatic number of $(V,\mathcal{C}_i)$. There are two
points $x$ and $y$ of $V$ such that every block which is in $\mathcal{B}_{A_{i+1}}$ but not in
$\mathcal{B}^{\dag}_{A_{i+1}}$ contains either $x$ or $y$. This implies that every block which is
in $\mathcal{C}_{i+1}$ but not in $\mathcal{C}_i$ contains either $x$ or $y$. Thus, we can obtain
a $(c_i+1)$-colouring of $(V,\mathcal{C}_{i+1})$ by taking a $c_i$-colouring of
$(V,\mathcal{C}_i)$ and recolouring the vertices $x$ and $y$ with a colour which is not used in
the original colouring.
\end{proof}

\vspace{0.3cm} \noindent{\bf Acknowledgements}

The first author was supported by an AARMS postdoctoral fellowship and by Australian Research
Council grants DE120100040 and DP120103067. The second author was supported by research grants from
NSERC, CFI and IRIF. The authors would like to thank the referees, whose comments substantially
improved this paper.

\end{document}